\documentclass[11pt]{article}
\usepackage{amsfonts}
\usepackage{xcolor}
\usepackage{amsmath,amssymb,comment}
\newtheorem{theo}{Theorem}[section]
\newtheorem{lem}[theo]{Lemma}
\newtheorem{cor}[theo]{Corollary}
\newtheorem{prop}[theo]{Proposition}

\newtheorem{defi}[theo]{Definition}

\newcommand{\mysection}[1]{\section{#1} \setcounter{equation}{0}}
\newcommand{\proof}{{\sc Proof.} \quad}
\newcommand{\proofc}{{\sc Proof} \ }
\newcommand{\be}{\begin{equation} \label}
\newcommand{\ee}{\end{equation}}
\newcommand{\bea}{\begin{eqnarray}\label}
\newcommand{\eea}{\end{eqnarray}}
\newcommand{\bas}{\begin{eqnarray*}}
\newcommand{\eas}{\end{eqnarray*}}
\newcommand{\bit}{\begin{itemize}}
\newcommand{\eit}{\end{itemize}}
\newcommand{\qed}{\hfill$\Box$ \vskip.2cm}
\newcommand{\nn}{\nonumber}

\newcommand{\R}{\mathbb{R}}
\newcommand{\N}{\mathbb{N}}
\newcommand{\pO}{\partial\Omega}
\newcommand{\eps}{\varepsilon}

\newcommand{\supp}{{\rm supp} \, }
\newcommand{\sgn}{{\rm sgn} \, }

\newcommand{\io}{\int_\Omega}
\newcommand{\na}{\nabla}
\newcommand{\Del}{\Delta}
\newcommand{\del}{\delta}
\newcommand{\al}{\alpha}
\newcommand{\lam}{\lambda}

\newcommand{\sig}{\sigma}
\newcommand{\vt}{\vartheta}

\newcommand{\pa}{\partial}
\newcommand{\bom}{\overline{\Omega}}
\newcommand{\Om}{\Omega}

\newcommand{\ov}{\overline}
\newcommand{\un}{\underline}
\newcommand{\wh}{\widehat}

\newcommand{\hs}{\hspace*}
\newcommand{\sm}{\setminus}

\newcommand{\vp}{\varphi}
\newcommand{\lbal}{\left\{ \begin{array}{l}}
\newcommand{\lball}{\left\{ \begin{array}{ll}}
\newcommand{\ear}{\end{array} \right.}

\newcommand{\abs}{\\[5pt]}
\newcommand{\Abs}{\\[5mm]}

\newcommand{\adb}{\allowdisplaybreaks}
\newcommand{\oy}{\ov{y}}

\newcommand{\ou}{\ov{u}}
\newcommand{\uu}{\un{u}}

\newcommand{\CC}{{\mathcal{C}}}
\begin{document}
\adb
\title{Blow-down and immediate onset of positivity\\
in the Neumann problem for (super-)fast diffusion equations}
\author
{
Youshan Tao\footnote{taoys@sjtu.edu.cn}\\
{\small School of Mathematical Sciences, CMA-Shanghai, Shanghai Jiao Tong University,}\\
{\small Shanghai 200240, P.R.~China}
\and
Michael Winkler\footnote{michael.winkler@math.uni-paderborn.de}\\
{\small Universit\"at Paderborn, Institut f\"ur Mathematik,}\\
{\small 33098 Paderborn, Germany} }
\date{}
\maketitle
\begin{abstract}
\noindent
  For the degenerate parabolic equation $u_t=\nabla\cdot (u^{m-1}\nabla u)$,
  the homogeneous Neumann problem in smoothly bounded domains $\Omega\subset\R^n$
  is considered in the (super-fast) diffusion range when $m\in (-\infty,1)$.
  Upper and lower estimates for solutions are inferred by means of two Moser-type iterations,
  and optimality properties of these bounds are derived using comparison arguments.
  On the basis of the accordingly obtained information on regularization into spaces of bounded and uniformly positive functions,
  a result on global solvability in an associated initial-boundary value problem is achieved for all nonnegative and nontrivial
  initial data belonging to $L^1(\Om) \cap \bigcup_{s>n(1-m)/2} L^s(\Om)$.\abs
\noindent {\bf Key words:} nonlinear diffusion; degenerate parabolic equation; regularity; positivity\\
{\bf MSC 2020:} 35K67 (primary); 35K65, 35B30, 35B40 (secondary)			
\end{abstract}
%
%
%
%
%
%
%
%
%
\section{Introduction}\label{intro}
This manuscript is concerned with nonnegative solutions of the nonlinear parabolic equation
\be{00}
	u_t = \na \cdot (u^{m-1}\na u)
\ee
in the so-called fast diffusion range where $m\in (-\infty,1)$.
Together with close relatives, this equation arises, for instance, 
in plasma and semiconductor physics (\cite{3}, \cite{4}, \cite{14}, \cite{18}), in theories for diffusive limits
of kinetic models (\cite{17}), and in differential geometry (\cite{12}); further application contexts are
described in \cite{V1} and \cite{DK}.
Accordingly, an extensive literature has been devoted to the analysis of core properties resulting from
the two diffusion degeneracies arising both near points where $u$ is large,
and in regions where $u$ is small. (\cite{V1}, \cite{V2}, \cite{DK}).\abs
In the framework of the Cauchy problem for (\ref{00}) in $\R^n$ with $n\ge 1$, for instance, classical results have identified
the exponent $m_c:=\frac{(n-2)_+}{n}$ as critical: In the supercritical range where $m\in (m_c,1)$,
actually extending to all of $(m_c,\infty)$ when theories of linear heat and porous medium equations are included,
nonnegative integrable initial
data $u_0$ generate unique solutions which conserve mass and immediately enter $L^\infty(\R^n)$;
specifically, if $u_0\in L^s(\R^n)\cap L^1(\R^n)$ with some $s\ge 1$, then there exists $C=C(u_0)>0$ such that
\be{01}
	\|u(\cdot,t)\|_{L^\infty(\R^n)}
	\le C t^{-\theta(m,s)},
	\quad
	\theta(m,s):=\frac{n}{2s+n(m-1)},
	\qquad \mbox{for all } t>0;
\ee
in fact, even for initial data which merely belong to $L^1_{loc}(\R^n)$, some global solutions exist which are bounded for $t>0$
and satisfy an inequality of the form in (\ref{01}) at least for $t\in (0,1)$
(cf.~\cite{HerreroPierre1985}, \cite{BV_ADM2010}, \cite{V1} and \cite{V2} as well as the references therein, for instance,
and \cite{ERV88} for
an additional result addressing global solutions when $n=1$ and $m>-1$).
When $m\in (0,m_c]$, the estimate in (\ref{01}) continues to hold but is then restricted to initial data belonging
to $L^s(\R^n)$ with $s>\frac{n(1-m)}{2}$ (\cite{V2}).\abs
In this subcritical range $m\in (0,m_c]$, however, including the value $m=0$ if $n=2$,
the apparently most striking feature of (\ref{00}) is linked to its degeneracy near $u=0$:
Throughout large classes of initial data, namely, solutions gradually lose mass and vanish in all of $\R^n$ at some finite time;
for a small selection of findings on the rich variety of possibilities in asymptotic behavior near the extinction time
of such solutions, we may refer to
\cite{del pino_iumj2001}, \cite{BBDGV}, \cite{BGV10}, \cite{BGV12}, \cite{jin_xiong}, \cite{akagi_cmp2016},
\cite{FVWY} and \cite{FKW}), for instance.
In the super-fast diffusion regime where $m\le 0$ if $n\ge 3$, $m<0$ if $n=2$, and $m\le -1$ if $n=1$,
finally, mass escape at spatial inifinity is strong enough to even lead to immediate
extinction: For initial data in $L^1(\R^n)$, no solution exists on any time interval (\cite{daskalopoulos_delpino}, \cite{ERV88}).\abs
{\bf The Neumann problem for (\ref{00}): Global solvability, upper and lower estimates for arbitrary $m\in (-\infty,1)$.} \quad
For homogeneous Dirichlet problems associated with (\ref{00}) in bounded domains,
results similar to the above hold, e.g.~with regard to questions concerning upper estimates of the form in (\ref{01}), as well
as on nonexistence when $m\le 0$, with a slight difference consisting in the fact that also when $m>0$, any solution
becomes extinct within finite time (see \cite{BF} for an overview, as well as
\cite{jin_xiong2026}, \cite{jin_xiong2023} and \cite{jin_xiong2025} for some recent findings on regularity properties and
subtle aspects of extinction behavior).\abs
It is to be anticipated, however, that the situation should significantly change when imposing homogeneous boundary conditions
of Neumann type prevents mass loss through boundaries:
Then, namely, already the presence of constant solutions at arbitrary size levels
strongly indicates that bounded and uniformly positive initial data should evolve into globally regular solutions,
and viewing mass conservation together with nonlinear enhancement of diffusion at small densities may lead to the prediction
that also merely nonnegative nontrivial initial data lead to solutions which immediately become positive.
A change concerning technical aspects can be expected with regard to upper estimates of the form in (\ref{00}):
While both in Cauchy and in homogeneous Dirichlet problems for (\ref{00}) a reasoning can be built on
symmetrization and concentration comparison with self-similar solutions of Barenblatt type (\cite{V2}),
no such reduction seems available in Neumann problems posed in widely general smoothly bounded domains.\abs
For sufficiently regular initial data, corresponding difficulties have been overcome in \cite{IPS},
actually in the context of the Neumann problem in a smoothly bounded domain $\Om\subset\R^n$ for a generalization of 
(\ref{00}) with $m<0$
that additionally contains some prescribed space-dependent weight functions in the diffusion rate,
by means of an approach interpreting (\ref{00}) as a gradient flow with respect to a Wasserstein distance:
For initial data $u_0 \in L^{3-m}(\Om)$ which are essentially positive in the sense that the associated energy $\io u_0^m$
is finite, some unique global weak solutions are constructed (\cite[Theorem 1.2]{IPS}).
Under the stronger assumption that, in addition to the above,
\be{02}
	\io u_0^s < \infty
	\mbox{ for some } s>\frac{n(1-m)}{2}
	\qquad \mbox{and} \qquad
	\io u_0^{-p} < \infty
	\mbox{ for some } p>(1-m)\cdot\max\Big\{1,\frac{n}{2}\Big\},
\ee
these solutions are shown to have the property that both $u$ and $\frac{1}{u}$ are bounded in $\Om\times (\tau,\infty)$
for each $\tau>0$ (\cite[Theorem 1.3]{IPS}; cf.~also 
\cite{DS} for supplementary information on large time behavior).
It seems widely open, however, how the fast diffusion mechanism in (\ref{00}), especially in the super-fast diffusion range $m\le 0$,
acts on less regular initial data, and particularly on initial data with compact support.\abs
The first purpose of the present manuscript consists in developing a basic theory of global solvability for such problems
also in the presence of merely nonnegative initial data.
In close connection to this, a second objective is
to provide quantitative information not only on upper bounds for solutions, but especially also on the immediate onset
of uniform positivity, as forming a feature of distinctly novel character when compared with corresponding properties
of Cauchy and zero Dirichlet problems.
Thus subsequently concerned with
\be{0}
	\lball
	u_t = \na\cdot (u^{m-1}\na u),
	\qquad & x\in\Om, \ t>0, \\[1mm]
	\frac{\pa u}{\pa\nu}=0,
	\qquad & x\in\pO, \ t>0, \\[1mm]
	u(x,0)=u_0(x),
	\qquad & x\in\Om,
	\ear
\ee
in a smoothly bounded domain $\Om\subset\R^n$ with $n\ge 1$ and with $m\in (-\infty,1)$,
we will address these issues by firstly operating in the framework of the following fairly intuitive concept of solvability:
\begin{defi}\label{ds}
  Let $n\ge 1$ and $\Om\subset\R^n$ be a bounded domain with smooth boundary, let $m\in (-\infty,1)$, and suppose
  that $u_0\in L^1(\Om)$ be nonnegative.
  Then by a {\em locally smooth $L^1$-continuous solution of (\ref{0})} we mean a function
  $u\in C^0([0,\infty);L^1(\Om)) \cap C^{2,1}(\bom\times (0,\infty))$ such that $u>0$ in $\bom\times (0,\infty)$,
  that in the classical sense we have $u_t=\na\cdot (u^{m-1}\na u)$ in $\Om\times (0,\infty)$ and
  $\frac{\pa u}{\pa\nu}=0$ on $\pO\times (0,\infty)$, and that $u(\cdot,t)\to u_0$ in $L^1(\Om)$ as $t\searrow 0$.
\end{defi}
Indeed, this concept turns out to be meaningful in the sense that it features existence and uniqueness
for arbitrary nonnegative and nontrivial initial data which satisfy
an integrability condition consistent with the requirement that the exponent $\theta(m,s)$ in (\ref{01}) be well-defined and positive:
\begin{prop}\label{prop66}
  Let $n\ge 1$ and $\Om\subset\R^n$ be a bounded domain with smooth boundary, and let $m\in (-\infty,1)$ and $s\ge 1$ be such that
  \be{66.1}
	s>\frac{n(1-m)}{2}.
  \ee
  Then for any nonnegative $u_0\in L^s(\Om)$ with $u_0\not\equiv 0$, the problem (\ref{0}) admits a uniquely determined
  locally smooth $L^1$-continuous solution $u$ in the sense of Definition \ref{ds}.\\
  This solution can be approximated by classical solutions of (\ref{0}) with smooth initial data in the following sense:
  Whenever $(u_{0j})_{j\in\N} \subset C^\infty(\bom)$ is such that $u_{0j}>0$ in $\bom$ for all $j\in\N$ and $u_{0j} \to u_0$
  in $L^s(\Om)$ as $j\to\infty$, the corresponding solutions $u_j$ of (\ref{0}) with initial data $u_{0j}$ satisfy
  $u_j \to u$ in $C^0_{loc}([0,\infty);L^1(\Om))$ and in $C^{2,1}_{loc}(\bom\times (0,\infty))$ as $j\to\infty$.
\end{prop}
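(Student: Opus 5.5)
The plan is to work with the approximate problems, derive bounds that do not depend on $j$, pass to the limit by compactness, and then prove uniqueness via an $L^1$-contraction argument.

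\textbf{Approximate solutions.} For smooth $u_{0j}>0$, standard parabolic theory gives global classical solutions $u_j$ of (\ref{0}). The reason is that comparison with the constant solutions $\min u_{0j}$ and $\max u_{0j}$ keeps $u_j$ between two positive constants, so the equation is uniformly parabolic along each trajectory. These bounds depend on $j$, so the core of the proof is a family of estimates independent of $j$.

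\textbf{Uniform upper bound.} Mass conservation gives $\io u_j=\io u_{0j}$. Testing with $u_j^{p-1}$ and using Gagliardo--Nirenberg, which is where condition (\ref{66.1}) enters, I would run a Moser-type iteration. Starting from $\|u_{0j}\|_{L^s}$, it should yield
\[
\|u_j(\cdot,t)\|_{L^\infty(\Om)}\le C(1+t^{-\theta})
\]
with $\theta=\theta(m,s)>0$ and $C$ depending only on $\sup_j\|u_{0j}\|_{L^s}$.

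\textbf{Uniform lower bound.} This is the step I expect to be the main obstacle, since it is the novel positivity claim. The idea is a second Moser iteration for $u_j^{-1}$. Testing with $u_j^{-p-1}$ gives
\[
\frac{d}{dt}\io u_j^{-p}=-c_p\io |\na u_j^{\frac{m-1-p}{2}}|^2 .
\]
To start the iteration we need some control of $\io u_j^{-p}$ for some $p>0$ at a small positive time. With only $u_0\not\equiv 0$ available, I would first get a logarithmic bound. Testing with $u_j^{-1}$ (for $m<1$) gives
\[
\frac{d}{dt}\io \ln u_j = \io u_j^{m-3}|\na u_j|^2 \ge 0 .
\]
Combining this monotonicity with the mass lower bound $\io u_j\ge c>0$, and with the fact that the upper bound confines $u_j$ below $C(\tau)$, should give a bound on $\io u_j^{-\epsilon}$ at times $t\ge\tau$. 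For this I would use a Poincaré-type inequality applied to $u_j^{(m-1)/2}$, or a comparison with explicit subsolutions built from a localized positive lower bound where $u_0$ is positive, spread by the enhanced diffusion at small densities. The iteration would then upgrade this to
\[
u_j\ge c(\tau)>0 \quad \text{on } \bom\times(\tau,\infty),
\]
uniformly in $j$.

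\textbf{Regularity and passage to the limit.} Once $c(\tau)\le u_j\le C(\tau)$, the equation is uniformly parabolic on $\bom\times(\tau,T)$. Hölder regularity and then Schauder estimates give $C^{2+\alpha,1+\alpha/2}$ bounds, and Arzelà--Ascoli yields a subsequence converging in $C^{2,1}_{loc}(\bom\times(0,\infty))$. For $L^1$-continuity at $t=0$, I would use that for the smooth solutions
\[
\|u_j(\cdot,t)-u_k(\cdot,t)\|_{L^1}\le\|u_{0j}-u_{0k}\|_{L^1}
\]
by Kato's inequality and the monotonicity of $u\mapsto u^m/m$. This makes $(u_j)$ Cauchy in $C^0([0,T];L^1)$, and hence gives convergence of the whole sequence in $C^0_{loc}([0,\infty);L^1)$.

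\textbf{Uniqueness.} For two solutions $u,v$ in the sense of Definition \ref{ds}, the same $L^1$-contraction should hold on $[t_0,t]$ for each $t_0>0$, since both are smooth and positive there. Letting $t_0\searrow 0$ and using $L^1$-continuity at $t=0$ gives $u\equiv v$. The same argument shows that the limit does not depend on the approximating sequence.
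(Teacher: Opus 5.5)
Your treatment of the approximating problems, the $L^1$-contraction, the Schauder/Arzel\`a--Ascoli step and uniqueness agrees with the paper, and your upper bound is a fair sketch of Lemmas \ref{lem1}--\ref{lem3}. The genuine gap is the step you flag yourself: a first bound on some negative-power quantity at positive times, uniform in $j$. None of the tools you list delivers it.

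\begin{itemize}
\item The monotonicity $\frac{d}{dt}\int_\Omega \ln u_j\ge 0$ only gives $\int_\Omega \ln u_j(\cdot,t)\ge\int_\Omega\ln u_{0j}$. If $u_0$ vanishes on a set $E$ of positive measure, Jensen's inequality yields $\int_E \ln u_{0j}\le |E|\ln\big(|E|^{-1}\int_E u_{0j}\big)\to-\infty$. The same defect affects any argument that propagates $\int_\Omega u_{0j}^{-p}$ forward in time.
\item A plain Poincar\'e inequality for $u_j^{(m-1)/2}$ controls only the deviation from the mean, and the mean is exactly the unknown quantity.
\item Comparison with subsolutions needs $u_0$ to dominate a positive bump on some ball (or $u_j(\cdot,t_1)$ to do so, uniformly in $j$). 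This fails for instance for $u_0=\chi_E$ with $E$ closed, nowhere dense and of positive measure. At positive times such a bump is not available without already knowing the positivity you want to prove.
\end{itemize}

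The missing mechanism is one that forgets the initial value; in the paper this is Lemma \ref{lem12}.
\begin{itemize}
\item Since $\int_\Omega u_j^q$ is nondecreasing for $q\le 1$ and $\int_\Omega u_j^r$ is nonincreasing for $r\ge 1$, one has $|\{u_j(\cdot,t)>\delta\}|\ge\eta$ for all $t>0$. Here $\delta,\eta>0$ are controlled by $\|u_{0j}\|_{L^q(\Omega)}$ and $\|u_{0j}\|_{L^r(\Omega)}$. Your combination of mass and the upper bound would also give this, for $t\ge\tau$.
\item The truncation $\varphi=(u_j^{-\gamma}-\delta^{-\gamma})_+$ with $\gamma=\frac{p-m+1}{2}$ then vanishes on a set of measure at least $\eta$.
\item A Poincar\'e inequality for such functions, with constant depending only on $\eta$ (Lemma \ref{lem11}, proved via condenser capacities), gives $\int_\Omega u_j^{-(p-m+1)}\le c(\eta)\int_\Omega|\nabla u_j^{-\gamma}|^2+c\,\delta^{-(p-m+1)}$.
\item With H\"older's inequality, $y(t):=\int_\Omega u_j^{-p}(\cdot,t)$ satisfies $y'+a\,y^{\frac{p-m+1}{p}}\le b$. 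Because $\frac{p-m+1}{p}>1$ exactly when $m<1$, this superlinear damping yields $y(t)\le Ct^{-\frac{p}{1-m}}+C$ for all $t>0$, irrespective of $y(0)$ (Lemma \ref{lem75}).
\end{itemize}
Only this supplies the uniform starting level for the negative-power Moser iteration (Lemma \ref{lem13}). That iteration additionally uses time weights and interpolates the lower-order Gagliardo--Nirenberg term against $\|1/u_j\|_{L^1(\Omega)}$. Without this mechanism, your lower bound $u_j\ge c(\tau)$, and hence the whole compactness step, is unsupported.
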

In Section \ref{sect4}, this result will be derived from two results of Moser-type iterative $L^p$ arguments involving
time-dependent weights.
The first of these, to be verified in Section \ref{sect2}, will as a by-product
imply the following quantitative blow-down estimate for solutions
in dependence on regularity of the initial data, measured in $L^s$ spaces with $s\ge 1$ satisfying (\ref{66.1}).
\begin{theo}\label{theo67}
  Let $n\ge 1$ and $\Om\subset\R^n$ be a bounded domain with smooth boundary, and let $m\in (-\infty,1)$, $s\in [1,\infty)$
  and $p\in (1,\infty]$ be such that
  \be{67.1}
	p>s>\frac{n(1-m)}{2}
	\qquad \mbox{and} \qquad
	p>1-m.
  \ee
  Then there exists $\Gamma^\star=\Gamma^\star(m,s,p)>0$ with the property that whenever $u_0\in L^s(\Om)$ is nonnegative with
  $u_0\not\equiv 0$, the locally smooth $L^1$-continuous solution of (\ref{0}) satisfies
  \be{67.2}
	\|u(\cdot,t)\|_{L^p(\Om)}
	\le \Gamma^\star \|u_0\|_{L^s(\Om)}^\frac{2s+\frac{n(m-1)s}{p}}{2s+n(m-1)} \cdot t^{-\frac{n(1-\frac{s}{p})}{2s+n(m-1)}}
		+ \Gamma^\star \|u_0\|_{L^s(\Om)}
	\qquad \mbox{for all } t>0,
  \ee
  where we have set $\frac{\xi}{p}:=0$ for $\xi\in\R$ if $p=\infty$
\end{theo}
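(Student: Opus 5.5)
We will prove (\ref{67.2}) first for the classical solutions $u_j$ with smooth positive data from Proposition \ref{prop66}, with $\Gamma^\star$ independent of $j$. Since $u_j\to u$ in $C^{2,1}_{loc}(\bom\times(0,\infty))$, the estimate then passes to $u$ for each fixed $t>0$, using Fatou's lemma for finite $p$. So let $u$ be smooth and positive. The basic ingredient is the $L^q$ identity
\[
\frac{1}{q}\frac{d}{dt}\io u^q = -(q-1)\io u^{q+m-3}|\na u|^2 = -\frac{4(q-1)}{(q+m-1)^2}\io |\na u^{\frac{q+m-1}{2}}|^2 ,
\]
valid for $q>1-m$, $q\ge 1$. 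Mass is conserved, and $\|u(\cdot,t)\|_{L^s(\Om)}$ is nonincreasing for $s\ge1$, which is where $s\ge1$ enters. To control the right-hand side we use a Gagliardo--Nirenberg inequality in bounded domains for $w=u^{(q+m-1)/2}$, with lower norm given by $\|u\|_{L^r}$ for some $r<q$:
\[
\|w\|_{L^{2q/(q+m-1)}} \le C\|\na w\|_{L^2}^{a}\|w\|_{L^{2r/(q+m-1)}}^{1-a} + C\|w\|_{L^{2r/(q+m-1)}} .
\]
The additive term here is the source of the summand $\Gamma^\star\|u_0\|_{L^s}$ in (\ref{67.2}), which reflects the presence of constants on bounded domains.

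\emph{Step 1 (one step $s\to q$).} Take $r=s$ in the inequality above. Since $\|u\|_{L^s}\le\|u_0\|_{L^s}$, we obtain an ODI of the form
\[
y'\le -c\,y^{1+\kappa}\|u_0\|_{L^s}^{-\beta} + C\|u_0\|_{L^s}^{\gamma},
\]
where $y=\io u^q$. The condition $s>n(1-m)/2$ is exactly what makes the resulting power $\kappa>0$ superlinear. Comparison with the explicit supersolution $(\mathrm{const}\cdot t)^{-1/\kappa}$ plus a constant yields $\|u(t)\|_{L^q}\le C\|u_0\|_{L^s}^{\mu}t^{-\lambda}+C\|u_0\|_{L^s}$. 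The exponents are forced by scaling: if $u$ solves (\ref{0}), then $\lambda^{...}u(\lambda x,\lambda^{...}t)$ does too. This settles every finite $p$ that is accessible in one step, provided $q=p>1-m$.

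\emph{Step 2 (Moser iteration for $p=\infty$, or large $p$).} Choose $q_k=2^kq_0$ with $q_0>\max\{s,1-m\}$, so that Step 1 applies to $q_0$. We then run the same ODI argument from $q_{k-1}$ to $q_k$ on time intervals $[t(1-2^{-k+1})/2+t/2,\ t]$, with time-dependent weights: multiply by $(\tau-\tau_k)^{\alpha_k}$, or equivalently use the nonincreasing monotonicity of $\|u\|_{L^{q_{k-1}}}$ together with the supersolution argument on shrinking intervals. At each step this produces constants of the form $C^k\,M_{k-1}^{\mathrm{power}}$. We then take logarithms and sum the resulting recursion for $M_k=\sup\|u\|_{L^{q_k}}$. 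Since the exponents multiply like $\prod(1+O(2^{-k}))$ and the constants grow like $b^k$, the limit $k\to\infty$ gives an $L^\infty$ bound of the form (\ref{67.2}). The exponents are again identified by scaling. For the non-scaling additive term, we split into the cases $t\le T(u_0)$ and $t>T(u_0)$, with $T(u_0)=\|u_0\|_{L^s}^{-(1-m)}$, the time at which the two terms balance. Intermediate $p\in(s,\infty)$ then follow by interpolating between $L^s$ and $L^\infty$, since $\|u\|_{L^p}\le\|u\|_{L^s}^{s/p}\|u\|_{L^\infty}^{1-s/p}$, which reproduces exactly the exponents in (\ref{67.2}).

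\emph{Main obstacle.} The main difficulty is bookkeeping the lower-order term produced by the bounded-domain Gagliardo--Nirenberg inequality through the iteration while keeping $\Gamma^\star$ independent of $u_0$. I would first normalize by scaling: set $\tilde u(x,t)=\lambda^{-1}u(x,\lambda^{m-1}t)$ with $\lambda=\|u_0\|_{L^s}$, so that $\|\tilde u_0\|_{L^s}=1$. This reduces the claim to proving $\|\tilde u(t)\|_{L^\infty}\le C(t^{-\theta}+1)$ for normalized data, and scaling back gives precisely the two terms in (\ref{67.2}). Note that this scaling is time-only, because $\Om$ is fixed.

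A second concern is the range $m\le 0$, where $q+m-1$ may be small or negative for small $q$; the requirement $p>1-m$, and the choice of $q_0>1-m$, is what keeps $q+m-1>0$ throughout.
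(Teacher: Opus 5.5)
Your proposal is correct in substance and has the same overall architecture as the paper. You prove estimates for the smooth positive approximations $u_j$ with $j$-independent constants and pass to the limit through Proposition \ref{prop66}. For finite $p$ you use a single Gagliardo--Nirenberg step from $L^s$ to $L^p$ plus ODE comparison (the paper's Lemma \ref{lem1}). For $p=\infty$ you run a Moser iteration for data normalized by $\|u_0\|_{L^s(\Omega)}=1$ and then undo a time dilation (Lemmas \ref{lem2} and \ref{lem3}).

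Your Step 1 already covers every finite $p$ with $p>s$ and $p>1-m$. The exponent $\iota$ in (\ref{1.33}) always lies in $(0,1)$, because $2p+n(m-1)>0$ and $(2-n)s+n(p+m-1)=2s+n(m-1)+n(p-s)>0$. Interpolating between $L^s$ and $L^\infty$ is therefore an optional second route, though it does reproduce the exponents in (\ref{67.2}).

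The genuine difference is the iteration. Of the two options you list, the one with pure doubling $q_k=2^kq_0$, shrinking intervals $[t(1-2^{-k}),t]$ and monotonicity of $t\mapsto\int_\Omega u^{q_{k-1}}$ is the Grillo-type scheme the paper deliberately avoids. It does work, and needs only the constant-coefficient comparison of Lemma \ref{lem75}. Between consecutive interval endpoints each step gives $\|u\|_{L^{q_k}}\le C(2^k/t)^{\frac{n}{2(q_k-n(1-m))}}\max\{1,\|u\|_{L^{q_{k-1}}}\}^{1+\frac{n(1-m)}{2(q_k-n(1-m))}}$. The time exponents then close exactly at $\frac{n(1-s/q_k)}{2s+n(m-1)}$ at every level. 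The price is $k$-dependent superlinearities $\frac{n+2}{n}-\frac{2(1-m)}{q_k}$ in the ODIs.

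The paper instead takes $p_k=2p_{k-1}+\frac{n(m-1)}{2}$. This is tuned so that every step becomes $y'+at^{\lambda}y^{\frac{n+2}{n}}\le bt^{-\mu}$ with a fixed power and exactly matching weights. It works on one interval $(\frac{t_0}{2},1]$ with weights $(t-\frac{t_0}{2})^{\gamma_k}$, which requires $p_0>\frac{(n+2)(1-m)}{4}$ and Lemma \ref{lem76}. What it gains is a template that is reused with small changes for the lower bounds in Lemma \ref{lem13}.

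Two slips need correcting.

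\emph{Rescaling.} The normalization is $\tilde u(x,\tau)=\lambda^{-1}u(x,\lambda^{1-m}\tau)$ with $\lambda=\|u_0\|_{L^s(\Omega)}$. Your $\lambda^{-1}u(x,\lambda^{m-1}t)$ solves the equation only if $\lambda=1$. Correspondingly, the two terms in (\ref{67.2}) balance at $t=\|u_0\|_{L^s(\Omega)}^{1-m}$, not at $\|u_0\|_{L^s(\Omega)}^{-(1-m)}$; this is the paper's choice $T:=K^{1-m}$ in Lemma \ref{lem1}.

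\emph{Role of scaling.} On the fixed domain $\Omega$ no spatial dilation $u(\lambda x,\cdot)$ is available, so the exponents are not ``forced by scaling.'' The time dilation only enforces that the power of $\|u_0\|_{L^s(\Omega)}$ equals $1+(1-m)$ times the exponent of $t^{-1}$. The time exponent $\frac{n(1-s/p)}{2s+n(m-1)}$ itself must come out of the explicit Gagliardo--Nirenberg/ODE computation. It does so, as in the simplifications at the end of the proof of Lemma \ref{lem1}, and as in the level-by-level check above for the iteration.
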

We underline that in the porous medium type counterpart of the Neumann problem (\ref{0}) in which $m>1$,
in the special case when $p=\infty$ an estimate precisely of the form then appearing in (\ref{67.2})
was found in \cite{grillo_jde2013} by means of a Moser recursion slightly different from the one to be performed below.
This technical deviation undertaken here is motivated by the ambition to set up an approach which upon some convenient modification
can be adapted so as to address the actually more delicate problem of establishing pointwise lower bounds for solutions.
Based on a variant of a classical Poincar\'e inequality explicitly spotlighting dependencies on sizes
of zero sets (cf.~Lemma \ref{lem11}), in Section \ref{sect3}
the following quantitative information on the immediate onset of spatially uniform
positivity will arise as a second by-product from our path toward Proposition \ref{prop66}.
\begin{theo}\label{theo68}
  If $n\ge 1$ and $\Om\subset\R^n$ is a bounded domain with smooth boundary, and if $m\in (-\infty,1)$, $q\in (0,1]$
  and $r\in [1,\infty)$ are such that $q<r$,
  then one can choose $\Gamma_\star=\Gamma_\star(m,q,r)>0$ in such a way that whenever
  $u_0\in L^1(\Om) \cap \bigcup_{s>n(1-m)/2} L^s(\Om)$ is nonnegative with $u_0\not\equiv 0$, the locally smooth
  $L^1$-continuous solution $u$ of (\ref{0}) has the property that
  \be{68.2}
	\Big\| \frac{1}{u(\cdot,t)} \Big\|_{L^\infty(\Om)}
	\le \Gamma_\star \cdot
	\zeta^\star
	\cdot t^{-\frac{1}{1-m}}
	+ \Gamma_\star \cdot
	\zeta^\star
	\cdot \frac{1}{\|u_0\|_{L^q(\Om)}}
	\quad \mbox{for all } t>0
  \ee
  with
  \be{zs}
	\zeta^\star:=\zeta_{\frac{qr}{r-q}} \Big( \mbox{$\frac{\|u_0\|_{L^q(\Om)}}{\|u_0\|_{L^r(\Om)}}$} \Big),
  \ee
  where for $\vt>0$ and $\xi>0$ we have set
  \be{zeta}
	\zeta_\vt(\xi) := \lball
	\xi^{-\frac{(n-2)\vt}{n}}
	\qquad & \mbox{if } n\ge 3, \\[1mm]
	1+ \ln_+ \frac{1}{\xi}
	\qquad & \mbox{if } n=2, \\[1mm]
	1
	\qquad & \mbox{if } n=1,
	\ear
  \ee
  with $\ln_+ \sig:=\max \{\ln \sig, 0\}$ for $\sig>0$.
\end{theo}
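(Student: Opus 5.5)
We reduce to smooth positive initial data via the approximation in Proposition \ref{prop66}. Since $u_j\to u$ in $C^{2,1}_{loc}(\bom\times(0,\infty))$ and $\|u_{0j}\|_{L^q(\Om)}\to\|u_0\|_{L^q(\Om)}$, $\|u_{0j}\|_{L^r(\Om)}\to\|u_0\|_{L^r(\Om)}$, both sides of (\ref{68.2}) pass to the limit. Here we take $u_{0j}\to u_0$ in $L^s\cap L^r$, or first truncate when $r>s$ and use monotonicity in the data. So it suffices to prove (\ref{68.2}) for classical positive solutions with constants independent of the data. The plan is a Moser iteration for $w:=u^{-1}$, or more conveniently for negative powers $u^{-\lambda}$. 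The estimate combines two ingredients: a lower control of $u$ in an averaged sense, which comes from mass-type quantities, and a smoothing step from a weak negative-power norm up to $L^\infty$ of $1/u$.

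\textbf{Step 1 (testing negative powers).} For $\lambda>0$ we compute
\[
\frac{d}{dt}\io u^{-\lambda}=-\lambda(\lambda+1)\io u^{m-\lambda-3}|\na u|^2
=-\frac{4\lambda(\lambda+1)}{(\lambda+2-m)^2}\io |\na u^{-\frac{\lambda+2-m}{2}}|^2 .
\]
Setting $v:=u^{-\frac{\lambda+2-m}{2}}$, we have $u^{-\lambda}=v^{\frac{2\lambda}{\lambda+2-m}}$, whose exponent lies strictly below $2$ because $m<1$. This gap is what produces the decay rate $t^{-1/(1-m)}$: the ODE $y'\le -c\,y^{1+\frac{1-m}{\lambda}}$ for $y=\io u^{-\lambda}$ gives $y(t)^{1/\lambda}\lesssim t^{-1/(1-m)}$, which is consistent with the explicit spatially homogeneous profile. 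To obtain such a differential inequality we need a Sobolev--Poincar\'e inequality for $v$ that involves no mean-value term. This is where Lemma \ref{lem11} enters. $u$ is not small on a set of controlled measure, so $v$ is bounded there, and the Poincar\'e inequality with explicit dependence on the size of the set where $v$ is small controls $\|v\|_{L^{2n/(n-2)}}$ by $\|\na v\|_{L^2}$ plus a bounded multiple of that level. The logarithmic and power dependences in (\ref{zeta}) reflect the $n=2$ and $n\ge3$ versions of this lemma.

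\textbf{Step 2 (measure of the set where $u$ is not small).} We need that for all $t>0$ the set $\{u(\cdot,t)\ge \kappa\|u_0\|_{L^q}\}$ has measure bounded below by a quantity of the form $c(\|u_0\|_{L^q}/\|u_0\|_{L^r})^{\frac{qr}{r-q}}$. The exponent $\frac{qr}{r-q}$ is exactly what interpolation (Paley--Zygmund type) produces from $\|u\|_{L^q}$ and $\|u\|_{L^r}$. For this we propose to show that $t\mapsto\|u(t)\|_{L^r}$ is nonincreasing for $r\ge1$, since the functional $\io u^r$ has nonpositive derivative. We also need $\io u^q$ to be nondecreasing for $q\le1$: its derivative is $-q(q-1)\io u^{m+q-3}|\na u|^2\ge0$. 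With both monotonicities, Chebyshev/Hölder gives the measure bound uniformly in time, with the thresholds fixed by the data.

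\textbf{Step 3 (iteration).} Feed Step 2 into Lemma \ref{lem11} to bound $\io v^2$ in terms of $\io|\na v|^2$ plus $(\kappa\|u_0\|_{L^q})^{-(\lambda+2-m)}\zeta$. Then run a Moser scheme with $\lambda_k\to\infty$ and time-dependent cutoffs $t_k\uparrow t$. At each level, the standard step gives $\sup u^{-\lambda_{k+1}}$-integrals in terms of powers of $\lambda_k$, $(t-t_{k})^{-1}$, and the previous integral. Tracking the exponents should yield the two terms $\zeta^\star t^{-1/(1-m)}$ and $\zeta^\star/\|u_0\|_{L^q}$.

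\textbf{Main obstacle.} I expect the hardest part to be this last bookkeeping. The Poincar\'e constant from Lemma \ref{lem11}, raised to the power $1/\lambda_k$ in each step, must combine into a single factor $\zeta^\star$, and it must enter linearly rather than with a growing exponent. I would first try to obtain an $L^{\lambda_0}$ bound for $1/u$ via Steps 1–2 alone, with the ODE comparison. I would then apply a local-in-time Moser iteration using only the Sobolev inequality for $v$ plus an $L^2$ term, with $\zeta$ entering only through the initial level.
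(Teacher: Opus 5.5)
Your outline follows the same architecture as the paper's proof. You reduce to smooth positive data via Proposition \ref{prop66}; your Steps 1--2 are the content of the paper's Lemma \ref{lem12} (testing with $u^{-p}$, monotonicity of $\io u^q$ and $\io u^r$ giving a lower bound on $|\{u(\cdot,t)>\del\}|$, Lemma \ref{lem11}, ODE comparison); and the paper then runs a Moser iteration, Lemma \ref{lem13}. (In Step 1 the exponent should be $\frac{\lam+1-m}{2}$, not $\frac{\lam+2-m}{2}$.) The gap is that Step 3, which is the core of the argument, is only announced. The point you leave open is the lower-order term in the Sobolev/Gagliardo--Nirenberg inequality on a bounded Neumann domain. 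Your ``$L^2$ term'' $\io v^2=\io u^{-(\lam+1-m)}$ is of higher order than the quantity $\io u^{-\lam}$ being iterated. If you copy the upper-bound scheme of Lemma \ref{lem2}, you obtain $\{\io u^{-p_{k-1}}\}^{2+\frac{(n+2)(1-m)}{2p_{k-1}}}$, which, unlike there, cannot be absorbed into $(1+\io u^{-p_{k-1}})^2$. The paper handles this as follows. It takes the lower-order norm to be $\|1/u\|_{L^{2p_k/(4-n(m-1))}(\Om)}$ and interpolates it between $L^{p_{k-1}}(\Om)$ and $L^1(\Om)$. It then controls the $L^1$ factor by $t^{-\frac{1}{1-m}}M_0$, where $M_0:=\sup_{t<K^{1-m}}t^{\frac{1}{1-m}}\io u^{-1}$ comes from Lemma \ref{lem12} with $p=1$. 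This produces exactly the factor $t^{-\frac{n+2}{2}}$ for which Lemma \ref{lem76} applies with $\mu=\frac{\al+\lam}{\al-1}$, so no dependence on the window length enters. The window is $(0,K^{1-m})$ with $K=\zeta^\star\|u_0\|_{L^q(\Om)}$, chosen so that the two terms of Lemma \ref{lem12} balance and $M_0\le 2\Gamma_3\zeta^\star$. Later times follow from the maximum principle.

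Your ``main obstacle'' is real, and ``tracking the exponents should yield'' does not resolve it. In the paper $\zeta^\star$ does enter only through $M_0$, but $M_0$ reappears at every level via $M_k\le c_4(2^{\frac{nk}{2}}+M_0^{\frac{n(1-m)}{2}})M_{k-1}^2$. With the crude bounds $\sum_{j=1}^k2^{k-j}<2^{k+1}$ and $p_k\ge 2^k$, Lemma \ref{lem13} ends with the powers $(\zeta^\star)^{1+n(1-m)}$ and $(\zeta^\star)^{n(1-m)}$, and that lemma is what the paper's one-line proof of the theorem invokes. If you count exactly, using $\sum_{j=1}^k2^{k-j}=2^k-1$ and $p_k=2^k+(2^k-1)\frac{n(1-m)}{2}$, the accumulated $\frac{n(1-m)}{2}\ln_+M_0$ is absorbed by $p_k$. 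One then gets $M_k^{1/p_k}\le C\max\{1,M_0\}$, i.e.\ linear dependence on $M_0$ (recall $\gamma_k=\frac{p_k}{1-m}$). What remains is the initial level. In your ODE the damping constant is proportional to $1/\zeta$, the Poincar\'e constant from Lemma \ref{lem11}, so it gives $\|1/u(\cdot,t)\|_{L^p(\Om)}\lesssim(\zeta^\star/t)^{\frac{1}{1-m}}+\|u_0\|_{L^q(\Om)}^{-1}$. Since $\zeta^\star$ is bounded below by a positive constant but unbounded above for $n\ge2$, the bound $(\zeta^\star)^{\frac{1}{1-m}}\lesssim\zeta^\star$ holds only when $m\le0$. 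For $m\in(0,1)$ and $n\ge2$, neither your sketch nor the computation in the paper's Lemma \ref{lem12} (which produces the same power $\frac{1}{1-m}$) delivers a factor linear in $\zeta^\star$.
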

Independently of Theorems \ref{theo67} and \ref{theo68}, a simple observation will establish a link between the solution
framework considered above and the following more standard notion which appears to be a straightforward extension of the
very traditional setting from, e.g., \cite{ACP} to the present situation.
\begin{defi}\label{dw}
  Let $n\ge 1$ and $\Om\subset\R^n$ be a bounded domain with smooth boundary, let $m\in (-\infty,1)$, let
  $u_0\in L^1(\Om)$, and suppose that
  $u\in C^0([0,\infty);L^1(\Om))$ is nonnegative and such that $\Psi(u)\in L^\infty_{loc}(\bom\times (0,\infty))$, where
  \be{Psi}
	\Psi(\xi):=
	\lball
	\frac{1}{m} \xi^m, \quad \xi\ge 0,
	\qquad & \mbox{if } m>0, \\[2mm]
		\lball
		\ln \xi,
		\quad & \xi>0, \\[1mm]
		-\infty
		\quad & \xi=0,
		\ear
	\qquad & \mbox{if } m=0, \\[5mm]
		\lball
		\frac{1}{m} \xi^m
		\quad & \xi>0, \\[1mm]
		-\infty
		\quad & \xi=0,
		\ear
	\qquad & \mbox{if } m<0.
	\ear
  \ee
  Then $u$ will be called a {\em weak solution of (\ref{0})} if for arbitrary $T>0$ and $t_0\in (0,T)$, and for
  each $\vp\in C^{2,1}(\bom\times [t_0,T])$ fulfilling $\frac{\pa\vp}{\pa\nu}=0$ on $\pO\times [t_0,T]$, the identity
  \be{0w}
	- \int_{t_0}^T \io u\vp_t + \io u(\cdot,T)\vp(\cdot,T) - \io u(\cdot,t_0) \vp(\cdot,t_0)
	= \int_{t_0}^T \io \Psi(u) \Del\vp
  \ee
  is valid.
\end{defi}
Indeed:
\begin{cor}\label{cor57}
  Let $n\ge 1$ and $\Om\subset\R^n$ be a bounded domain with smooth boundary, let $m\in (-\infty,1)$,
  and suppose that $u_0\in L^1(\Om) \cap \bigcup_{s>n(1-m)/2} L^s(\Om)$ is nonnegative with $u_0\not\equiv 0$.
  Then (\ref{0}) possesses a unique weak solution $u$ in the sense of Definition \ref{dw}.
  This solution coincides with the locally smooth $L^1$-continuous solution constructed above, and particularly
  satisfies $u>0$ in $\bom\times [0,\infty)$ as well as $u\in C^{2,1}(\bom\times (0,\infty))$.
\end{cor}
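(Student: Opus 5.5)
Existence follows from Proposition \ref{prop66}, and uniqueness will be shown by duality.

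\textbf{Existence.} Since $u_0 \in L^s(\Om)$ for some $s>\frac{n(1-m)}{2}$, and since $\Om$ is bounded, we may assume $s\ge 1$. Proposition \ref{prop66} then provides a locally smooth $L^1$-continuous solution $u$. This $u$ is positive and belongs to $C^{2,1}(\bom\times(0,\infty))$, so $\Psi(u)$ is continuous in $\bom\times(0,\infty)$ and hence lies in $L^\infty_{loc}(\bom\times(0,\infty))$. Moreover $\Psi'(u)=u^{m-1}$, so the equation reads $u_t=\Del\Psi(u)$ classically, with $\frac{\pa\Psi(u)}{\pa\nu}=0$ on $\pO$.

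For any admissible $\vp$ and $0<t_0<T$, multiply by $\vp$ and integrate over $\Om\times(t_0,T)$. An integration by parts in time gives the first three terms of (\ref{0w}). Two integrations by parts in space give $\int_{t_0}^T\io \Psi(u)\Del\vp$, and both boundary terms vanish because $\Psi(u)$ and $\vp$ have zero normal derivative. This is legitimate since $u$ is smooth up to the boundary on $[t_0,T]$. Hence $u$ is a weak solution. The claimed positivity on $\bom\times(0,\infty)$ comes either from Definition \ref{ds} or quantitatively from Theorem \ref{theo68}. (At $t=0$ the statement presumably refers to the solution's positivity for $t>0$.)

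\textbf{Uniqueness.} Let $v$ be any weak solution in the sense of Definition \ref{dw}, and let $u$ be the solution above. We show $v=u$ by a duality argument on $[t_0,T]$, followed by letting $t_0\searrow 0$. Subtracting the identities (\ref{0w}) for $u$ and $v$ gives
\[
 -\int_{t_0}^T\io w\,(\vp_t+a\Del\vp)+\io w(\cdot,T)\vp(\cdot,T)-\io w(\cdot,t_0)\vp(\cdot,t_0)=0,
\]
where $w:=u-v$ and $a:=\frac{\Psi(u)-\Psi(v)}{u-v}\ge0$.

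On $[t_0,T]$ the coefficient $a$ is bounded: $\Psi(v)\in L^\infty_{loc}$ forces $v$ to be locally bounded and, when $m\le 0$, bounded away from zero. When $m>0$, $v$ may vanish, so $a$ is bounded but possibly degenerate.

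We then solve the backward problem $\vp_t+(a_k+\eps)\Del\vp=0$ with Neumann data and $\vp(\cdot,T)=\chi$ for smooth $0\le\chi\le1$, where $a_k$ are smooth approximations of $a$. The standard Aronson--Crandall--Peletier estimates give $0\le\vp\le1$ and
\[
 \int_{t_0}^T\io (a_k+\eps)|\Del\vp|^2\le C,
\]
and the error term $\int_{t_0}^T\io w\,(a-a_k-\eps)\Del\vp$ is controlled by Cauchy--Schwarz. Letting $k\to\infty$ and $\eps\to0$ yields
\[
 \io w(\cdot,T)\chi\le\io |w(\cdot,t_0)|,
\]
that is, $L^1$-contraction on $[t_0,T]$. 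Since $u,v\in C^0([0,\infty);L^1(\Om))$ share the initial datum $u_0$, letting $t_0\searrow0$ gives $w(\cdot,T)=0$.

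\textbf{Main obstacle.} The delicate step is the duality estimate near $t_0$. Only local boundedness of $\Psi(v)$ is available, so all bounds must be derived on $[t_0,T]$ with $t_0>0$ fixed before passing $t_0\to0$ using $L^1$-continuity. The contraction inequality must therefore carry a constant independent of $t_0$, which the above construction achieves because only $0\le\vp\le1$ enters.
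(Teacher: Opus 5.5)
Your argument follows the same route as the paper. Existence comes from integrating the classical identity $u_t=\Del\Psi(u)$ of the solution from Proposition \ref{prop66} against $\vp$. Uniqueness comes from the Aronson--Crandall--Peletier duality argument, which gives an $L^1$-contraction on $[t_0,T]$, followed by $t_0\searrow 0$; the paper only cites this argument as an ``almost verbatim copy''.

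There is, however, a gap in the uniqueness step for $m<0$. The fact you state to make the duality work, that $\Psi(v)\in L^\infty_{loc}(\bom\times(0,\infty))$ forces $v$ to be locally bounded, is false there. For $m<0$, $\Psi(\xi)=\frac1m\xi^m$ increases to $0$ as $\xi\to\infty$, so boundedness of $\Psi(v)$ only yields a local positive lower bound for $v$ and no upper bound. For instance, $\Psi(|x-x_0|^{-\frac{2}{1-m}})=\frac1m|x-x_0|^{\frac{2|m|}{1-m}}$ is bounded.

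Your Cauchy--Schwarz control of $\int_{t_0}^T\io w(a-a_k-\eps)\Del\vp$ factors out $\|w\|_{L^\infty(\Om\times(t_0,T))}$, which is then unavailable. The places where $w$ is large are exactly where $v$ is large, and there the coefficient $a$ degenerates, since $a|w|=|\Psi(u)-\Psi(v)|$ stays bounded. So for $m<0$ you have not shown that the error term vanishes as $k\to\infty$, $\eps\to 0$. What your sketch actually proves is uniqueness among locally bounded weak solutions. To close the gap you must either add local boundedness to Definition \ref{dw} when $m<0$, or give a genuinely different uniqueness argument. The paper's one-line appeal to the Aronson--Crandall--Peletier argument does not address this either, since the standard form of that argument uses $\|u-v\|_{L^\infty}$ in precisely this step.

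Two smaller points.
\begin{enumerate}
\item Your degeneracy remark is reversed. For $m\in(0,1)$ we have $a=\int_0^1(\theta u+(1-\theta)v)^{m-1}\,d\theta$. Let $c:=\min_{\bom\times[t_0,T]}u>0$ and let $M$ bound $u$ and $v$ on $\Om\times(t_0,T)$. Then $a$ lies between $M^{m-1}$ and $\frac1m c^{m-1}$ even where $v=0$. Degeneracy of $a$ can only come from large $v$, that is, from the case $m<0$.
\item The reduction to $s\ge 1$ rests on the hypothesis $u_0\in L^1(\Om)$ (replace $s$ by $\max\{s,1\}$), not on boundedness of $\Om$.
\end{enumerate}
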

{\bf Remark.} \quad
i) \ For nontrivial nonnegative initial data $u_0\in L^1(\Om) \cap \bigcup_{s>n(1-m)/2} L^s(\Om)$,
the existence results in Proposition \ref{prop66} and Corollary \ref{cor57} extend those in \cite{IPS} by admitting
arbitrary behavior of $u_0$ near $u_0=0$; in particular, compactly supported initial data are covered here.\abs
ii) \ Theorems \ref{theo67} and \ref{theo68} go beyond the two-sided estimates in \cite[Theorem 1.3]{IPS}
not only by relying on assumptions on the initial data that are significantly weaker than those in (\ref{02}),
but also by providing quantitative information on the rates at which the respective instantaneous regularization effects occur.\Abs
Let us finally provide some evidence indicating that the statements from Proposition \ref{prop66} and Theorem \ref{theo67}
cannot substantially be improved.
Firstly, in the range $m<\frac{n-2}{n}$ in which there exists $s\ge 1$ satisfying $s<\frac{n(1-m)}{2}$, one can find
initial data which belong to $L^s(\Om)$ for any such $s$, but for which comparison from below with persistently singular
subsolutions will show in Section \ref{sect5} that (\ref{0}) does not possess any in the sense of Definition \ref{ds}:
\begin{prop}\label{prop56}
  Let $n\ge 1$ and $\Om\subset\R^n$ be a bounded domain with smooth boundary, and let $m\in (-\infty,\frac{n-2}{n})$.
  Then for any $x_0\in\Om$ there exists a nonnegative $u_0\in \bigcap_{s\in [1,n(1-m)/2)} L^s(\Om) \cap C^\infty(\bom\sm\{0\})$
  such that (\ref{0}) does not admit any locally smooth $L^1$-continuous solution $u$.
  This conclusion actually holds for any $u_0\in L^1(\Om)$ satisfying
  \be{56.1}
	{\rm{ess}} \inf_{\!\!\! x\in\Om} \Big\{ |x-x_0|^\frac{2}{1-m} u_0(x) \Big\} >0.
  \ee
\end{prop}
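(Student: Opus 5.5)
We argue by contradiction via comparison from below with a persistently singular stationary subsolution. For the existence part of the statement, fix $x_0\in\Om$, choose $\del>0$ with $B_{2\del}(x_0)\subset\Om$, and set $u_0(x):=|x-x_0|^{-\frac{2}{1-m}}\chi(x)+1$ with a smooth cutoff $\chi$ equal to $1$ near $x_0$. Since $\frac{2s}{1-m}<n$ exactly when $s<\frac{n(1-m)}{2}$, this $u_0$ lies in $L^s(\Om)$ for every $s\in[1,\frac{n(1-m)}{2})$; it is smooth away from $x_0$ and satisfies (\ref{56.1}). Hence it suffices to show nonexistence under (\ref{56.1}).

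The key object is the singular profile $w(x)=A|x-x_0|^{-\frac{2}{1-m}}$. Write $\rho:=|x-x_0|$ and $\al:=\frac{2}{1-m}$. Then $\na\cdot(w^{m-1}\na w)=\frac{1}{m}\Del w^m$ for $m\ne0$ and $\Del\ln w$ for $m=0$, and $w^m\sim\rho^{-m\al}$. Since $-m\al+2=\al$, we get $\Del w^m=c\rho^{-\al-2}$ with sign governed by $(-m\al)(-m\al+n-2)$. Using $m<\frac{n-2}{n}$ one computes $n-2-m\al=\frac{(n-2)-nm}{1-m}>0$; equivalently $\al<n$, which is exactly the integrability threshold. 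So I would not use $w$ stationarily, and would instead use the self-similar singular solution $\un u(x,t)=(A+Bt)^{\gamma}\rho^{-\al}$ or $\rho^{-\al}\,f(t)$. Plugging in gives the ODE $f'=\kappa f^{m}$ with $\kappa=\frac{1}{m}\al m(\al m-n+2)\cdot\frac{1}{\ldots}$, whose sign is that of $-(n-2-m\al)<0$ after accounting for the factor $1/m$. I would therefore take $f(t)=(a^{1-m}-c t)^{\frac{1}{1-m}}$, which stays positive up to a time $T_a$ that can be made arbitrarily large by choosing $a$ large. This can only be done locally in a ball $B_\del(x_0)$, with $a$ small instead. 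I would therefore modify the construction to $\un u=f(t)(\rho^{-\al}-\del^{-\al})_+$-type functions, or a similar truncation, to get a subsolution in $B_\del(x_0)\times(0,T)$ that vanishes on $\pa B_\del$. Checking its subsolution property, including at the truncation where $\Psi$ degenerates, is the main technical obstacle. The constant $a$ is chosen small enough that $\un u(\cdot,0)\le u_0$, using (\ref{56.1}).

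Now suppose a locally smooth $L^1$-continuous solution $u$ exists. For each small $t_0>0$, $u(\cdot,t_0)$ is bounded and positive on $\bom$. I would compare $u$ on $B_\del(x_0)\times(t_0,T)$ with shifted regularized subsolutions $\un u_\eps$, obtained by replacing $\rho$ with $\sqrt{\rho^2+\eps^2}$. The initial ordering follows from $L^1$-convergence $u(t_0)\to u_0$ via an $L^1$-contraction (Kato-type) comparison argument, namely $\frac{d}{dt}\int(\un u_\eps-u)_+\le0$ on the ball, since $\un u_\eps\le0$ on its boundary while $u>0$ there. Letting $t_0\searrow0$ yields $\int_{B_\del}(\un u_\eps(\cdot,t)-u(\cdot,t))_+\le\int(\un u_\eps(\cdot,0)-u_0)_+=0$. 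Then letting $\eps\to0$ gives $u(x,t)\ge\un u(x,t)$, which is unbounded near $x_0$ for small $t>0$. This contradicts $u\in C^{2,1}(\bom\times(0,\infty))$, in particular its boundedness near $x_0$ at any fixed positive time. The expected hardest step is the precise construction of the truncated singular subsolution valid for all $m<\frac{n-2}{n}$ including $m\le0$; there I would first try $\Psi(\un u)=f(t)\,\Psi(\rho^{-\al})$ shifted by a constant and verify the sign directly.
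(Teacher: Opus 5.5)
Your overall strategy is the paper's strategy: compare from below with a regularized version of the separable singular solution $f(t)|x-x_0|^{-\frac{2}{1-m}}$, then remove the regularization. (Incidentally, the identity you need is $m\al+2=\al$, which gives $\Del w^m\propto\rho^{-\al}$.) There is, however, a genuine gap in the step you yourself flag as the main obstacle, the localization to a ball. You leave it open, and the form you propose cannot work.

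For $g:=(\rho^{-\al}-\del^{-\al})_+$ one has $\na\cdot(g^{m-1}\na g)=g^{m-1}\Del g+(m-1)g^{m-2}|\na g|^2$ with $|\na g|\ge\al\del^{-\al-1}$. This tends to $-\infty$ at $\pa B_\del(x_0)$ for every $m<1$, whereas $(fg)_t=f'g\to0$. So near the truncation $fg$ is a strict supersolution, not a subsolution.

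More fundamentally, take $m\le0$, which is the entire admissible range when $n\in\{1,2\}$. Then no radial subsolution with $\un u_t$ bounded below can vanish on $\pa B_\del$. At a zero we would need $\Psi(\un u)\to-\infty$. But $\Del\Psi(\un u)\ge\un u_t\ge -C$ means $(r^{n-1}\Psi(\un u)_r)_r\ge -Cr^{n-1}$, which keeps $\Psi(\un u)_r$, and hence $\Psi(\un u)$, bounded below as $r\nearrow\del$. So your ball comparison ``since $\un u_\eps\le0$ on its boundary'' has nothing to rest on. Shifting $\Psi(\un u)$ by a finite constant cannot produce a zero either.

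There is also a sign problem in your regularization. With your exact rate $f'=-\al(n-2-m\al)f^m=-\al(n-\al)f^m$, the function $f(t)(\rho^2+\eps^2)^{-\al/2}$ satisfies $\un u_t-\na\cdot(\un u^{m-1}\na\un u)=\al^2 f^m\eps^2(\rho^2+\eps^2)^{-\frac{\al}{2}-1}>0$, again the wrong sign. You need faster decay, e.g.\ $f'=-n\al f^m=-\frac{2n}{1-m}f^m$.

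The paper avoids localization altogether. It replaces $|x-x_0|$ by $\rho(|x-x_0|)$, where $\rho$ is smooth with $\rho(r)=r$ near $0$, $\rho''\le0$ and $\rho'\equiv0$ near $\pO$. Then $y(t)\{\rho^2(|x-x_0|)+\eps\}^{-\frac{1}{1-m}}$ is smooth and positive on $\bom$, has $\frac{\pa\un u}{\pa\nu}=0$, and is a subsolution in all of $\Om$ when $y'=-\frac{2n}{1-m}y^m$. The reasons are:
\begin{itemize}
\item the $\rho^2\rho'^2$ and $\rho\rho''$ terms are nonpositive;
\item $0\le\rho'\le1$ and $\rho(r)\le r$ bound the remaining term by $n$.
\end{itemize}
The Neumann comparison of Lemma \ref{lem61} then applies on $\Om$ directly, and $\eps\searrow0$ gives the contradiction.

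If you prefer to keep your ball, drop the truncation instead. By (\ref{56.1}) and boundedness of $\Om$, $u_0\ge c>0$, hence $u\ge c$ by comparison with the constant solution. Now take $y(t)(|x-x_0|^2+\eps)^{-\frac{1}{1-m}}$ with $y(0)$ small enough that it lies below $c$ on $\pa B_\del(x_0)$. Then the boundary term in your Kato argument vanishes and the comparison goes through.
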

Another comparison argument based on suitably constructed smooth supersolutions will reveal that also the time dependence
in the lower estimate found in Theorem \ref{theo68} is essentially optimal:
\begin{prop}\label{prop55}
  Let $n\ge 1$ and $\Om\subset\R^n$, let $m\in (-\infty,1)$ and let $\eps\in (0,1)$.
  Then there exist $C>0$ and $u_0\in C^\infty(\bom)$ such that $u_0\ge 0$ and $u_0\not\equiv 0$ in $\Om$, and that the corresponding
  locally smooth $L^1$-continuous solution $u$ of (\ref{0}) satisfies
  \be{55.1}
	\inf_{x\in\Om} u(x,t) \le C t^\frac{1-\eps}{1-m}
	\qquad \mbox{for all } t>0.
  \ee
\end{prop}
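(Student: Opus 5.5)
We construct an explicit smooth supersolution $\ov{u}$ of (\ref{0}) that vanishes at a fixed interior point $x_0\in\Om$ at the required rate, and then use the comparison principle. Since $\inf_{x\in\Om}u(x,t)\le u(x_0,t)\le \ov{u}(x_0,t)$, it suffices to find $\ov{u}$ with $\ov{u}(x_0,t)\le Ct^{\frac{1-\eps}{1-m}}$ for all $t>0$.

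We look for $\ov{u}$ in the form
\[
	\ov{u}(x,t)=\big(a(t)+b|x-x_0|^{\gamma}\big)^{\beta},
	\qquad a(t)=(\kappa t)^{\frac{1}{1-\eps}}
\]
(or a similar power), with $\beta>0$ large and $\gamma\ge 2$. This gives $\ov{u}(x_0,t)=a(t)^\beta$, so the choice $\beta=\frac{(1-\eps)^2}{1-m}$ yields the exponent in (\ref{55.1}) exactly. Writing $\ov{u}^{m-1}\na\ov{u}=\frac1m\na\ov{u}^m$ for $m\ne 0$ (and $\na\ln\ov{u}$ for $m=0$), and setting $w:=a+b\rho^\gamma$ with $\rho=|x-x_0|$, we compute
\[
	\na\cdot(\ov{u}^{m-1}\na\ov{u})
	=\beta\, b\gamma\, w^{m\beta-\beta-1}\rho^{\gamma-2}\cdot w^{\beta}\Big[(n+\gamma-2)+(m\beta-1)\frac{b\gamma\rho^\gamma}{w}\Big].
\]
This makes the diffusion term nonpositive wherever $(1-m\beta)b\gamma\rho^\gamma\ge (n+\gamma-2)w$, that is, away from a small neighbourhood of $x_0$ once $m\beta<1$. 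Near $x_0$, where the bracket is positive, we must absorb it using $\ov{u}_t=\beta a' w^{\beta-1}$. The needed inequality
\[
	a'\ge b\gamma(n+\gamma-2)\,w^{m\beta-\beta}\rho^{\gamma-2}w^{\beta}\cdot(\text{const})
\]
in the region $b\rho^\gamma\lesssim a$ reduces, using $\rho^{\gamma-2}\lesssim (a/b)^{(\gamma-2)/\gamma}$, to a power comparison of the form $a'\gtrsim a^{\,m\beta+(\gamma-2)/\gamma}\,b^{2/\gamma}$. With $a\sim t^{1/(1-\eps)}$ we have $a'\sim a^{\eps}$, and the exponents are balanced precisely by the relation $\beta(1-m)=(1-\eps)^2$ up to the small correction coming from $\gamma$. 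This is where $\eps>0$ buys room: choosing $\gamma$ large (so that $(\gamma-2)/\gamma$ is close to $1$), the required inequality holds for small $a$ after fixing $\kappa$ large. For large times, where the inequality might fail, we instead use that mass conservation and the upper bound of Theorem \ref{theo67} keep $u$ bounded, and we modify $a$ for $t\ge1$ (e.g. capping the claim with a constant $C$ large, which only needs $\ov{u}(x_0,t)\to$ large). A cleaner option is to prove (\ref{55.1}) only for $t\in(0,1)$ by comparison and for $t\ge1$ to use $\inf u\le \frac{1}{|\Om|}\io u_0\le Ct^{\frac{1-\eps}{1-m}}$.

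The Neumann condition must also be handled. We take $\Om\subset B_R(x_0)$ and need $\frac{\pa\ov{u}}{\pa\nu}\ge0$ on $\pO$. This holds automatically if $\Om$ is star-shaped about $x_0$; in general we replace $\rho^\gamma$ by a smooth radial profile that is constant near $\pO$. Alternatively, we choose $u_0$ supported in a small ball $B_\delta(x_0)$ and compare on a subdomain: the comparison is done on $\Om$ with a supersolution that is large, hence a supersolution trivially, outside a neighbourhood, where it equals a large constant. The constant is chosen so that $\ov{u}(\cdot,0)\ge u_0$, which is possible for $u_0$ smooth, nonnegative, nontrivial, and vanishing to high order at $x_0$, e.g. $u_0\le \ov{u}(\cdot,0)=b^\beta\rho^{\gamma\beta}$. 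Comparison between the classical solution and $\ov{u}$ on $[\tau,T]$, followed by $\tau\searrow0$ via the approximation in Proposition \ref{prop66}, completes the argument.

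The main obstacle is the exponent bookkeeping near $x_0$, where the diffusion of $\ov{u}$ is positive and must be dominated by $\ov{u}_t$. I expect this to force exactly the loss $\eps$, using $\gamma$ large and $\beta$ slightly smaller than the critical value $\frac{1}{1-m}$.
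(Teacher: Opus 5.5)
\textbf{There is a genuine gap.} It lies away from $x_0$, not in the region you flagged as the main obstacle.

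\textbf{Where the supersolution fails.} You claim the diffusion term of $\ov u=(a(t)+b\rho^\gamma)^\beta$ is nonpositive wherever $(1-m\beta)b\gamma\rho^\gamma\ge(n+\gamma-2)w$. Since $w>b\rho^\gamma$, this set is empty unless $m\beta\gamma<2-n$.

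\begin{itemize}
\item Write $\theta:=b\rho^\gamma/w\in[0,1)$. The bracket $(n+\gamma-2)+(m\beta-1)\gamma\theta$ is then at least $\min\{n+\gamma-2,\,n-2+m\beta\gamma\}$.
\item This minimum is positive for every $m\in(0,1)$ when $n\ge2$, and for $m=0$ when $n\ge3$.
\item Even $m\beta<1$ can fail, since $m\beta=\frac{m(1-\eps)^2}{1-m}$ is large for $m$ near $1$.
\end{itemize}

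Take any fixed annulus $\{R_0\le|x-x_0|\le R_1\}$ on which the profile is $b|x-x_0|^\gamma$. There the diffusion term stays bounded below by a positive constant as $t\searrow0$. On the other hand, $\ov u_t=\beta a'(t)w^{\beta-1}\to0$ uniformly there, because $a'(t)=\frac{\kappa}{1-\eps}(\kappa t)^{\frac{\eps}{1-\eps}}\to0$. So $\ov u$ is not a supersolution for small $t$, however $\kappa$ is chosen.

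\textbf{Changing the profile does not help.} Flattening near $\pO$ or capping by a constant cannot repair this. Consider $\ov u=(a(t)+\phi)^\beta$ with $a'(0^+)=0$ and any smooth $\phi$ that is positive on a punctured neighbourhood of $x_0$ and vanishes at $x_0$.
\begin{itemize}
\item Letting $t\searrow0$ in the supersolution inequality at points where $\phi>0$ forces $\frac{1}{m\beta}\Del(\phi^{m\beta})\le0$ there.
\item For $m>0$ and $n\ge2$ this is impossible. A nonnegative superharmonic function on a punctured ball extends across the centre. If it vanishes there, the strong minimum principle makes it identically zero.
\end{itemize}

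Your reduction to $t\in(0,1)$ via mass conservation is correct. It does not help, because the failure occurs as $t\searrow0$.

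Separately, your inner-region inequality carries a spurious factor $w^\beta$. The correct condition is $a'\ge b\gamma\rho^{\gamma-2}w^{-(1-m)\beta}[\dots]$. With it, the region $b\rho^\gamma\lesssim a$ is actually fine once $\gamma\ge\frac{2}{\eps(1-\eps)}$.

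\textbf{The missing idea.} Keep $\ov u_t$ nondegenerate by making the additive time term linear. Put the $\eps$-loss into the exponent and the spatial profile instead of the time profile. The paper uses
\[
\ov u(x,t)=\{\rho^2(|x|)+at\}^{\frac{1-\eps}{1-m}},
\]
with $\rho(r)=r^{1/\eps}$ near $0$, $\rho'\ge0$, and $\rho'=0$ for $r\ge r_1$. This makes the Neumann condition automatic, with no star-shapedness needed.

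\begin{itemize}
\item $\ov u_t-\na\cdot(\ov u^{m-1}\na\ov u)$ is a positive factor times $a-I(x,t)$.
\item Because $\ov u^{1-m}=w^{1-\eps}$ and $w\ge\rho^2$, $I$ is controlled by $\rho^{-2+2\eps}\rho'^2$, $\rho^{-1+2\eps}|\rho''|$ and $\frac{n-1}{r}\rho^{-1+2\eps}\rho'$.
\item Near the origin these equal the constants $\eps^{-2}$, $\frac{1-\eps}{\eps^2}$ and $\frac{n-1}{\eps}$. Away from the origin they are bounded, since $\rho\ge\rho(r_0)>0$.
\end{itemize}

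Choosing $a$ large gives a supersolution on all of $\Om\times(0,\infty)$ with $\ov u(0,t)=(at)^{\frac{1-\eps}{1-m}}$, and comparison finishes the proof. In your notation this means $a(t)=\kappa t$, $\beta=\frac{1-\eps}{1-m}$ and $\gamma\ge\frac{2}{\eps}$. With these choices the rest of your comparison scheme goes through.
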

\mysection{Upper bounds. Regularization into $L^p$ and $L^\infty$}\label{sect2}
This section will not only undertake a first step toward the existence statement in Proposition \ref{prop66}, but at the same
time also prepare the most substantial part of the quantitative claim in Theorem \ref{theo67}.
A fundament for our pointwise upper estimates, and thereby also a basis for the Moser type argument performed in Lemma \ref{lem2},
is obatined by a fairly classical combination of a standard testing procedure with a Gagliardo-Nirenberg interpolation
relying on the nonincrease of $0\le t\mapsto \io u^s(\cdot,t)$ along smooth trajectories for each $s\ge 1$:
\begin{lem}\label{lem1}
  Let $n\ge 1, m\in (-\infty,1), s\ge 1$ and $p\ge 1$ be such that
  \be{1.1}
	p>s>\frac{n(1-m)}{2}
	\qquad \mbox{and} \qquad
	p>1-m.
  \ee
  Then there exists $\Gamma_1=\Gamma_1(m,p,s)>0$ with the property that whenever $u_0\in C^0(\bom)$ is positive in $\bom$
  and $u\in C^0(\bom\times [0,\infty)) \cap C^{2,1}(\bom\times (0,\infty))$ is such that $u>0$ in $\bom\times (0,\infty)$
  and that (\ref{0}) is solved in the classical sense, we have
  \be{1.2}
	\|u(\cdot,t)\|_{L^p(\Om)}
	\le \Gamma_1 \|u_0\|_{L^s(\Om)}^\frac{2s+\frac{n(m-1)s}{p}}{2s+n(m-1)} \cdot t^{-\frac{n(1-\frac{s}{p})}{2s+n(m-1)}}
		+ \Gamma_1 \|u_0\|_{L^s(\Om)}
	\qquad \mbox{for all } t>0.
  \ee
\end{lem}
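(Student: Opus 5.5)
We test the equation in (\ref{0}) against $u^{p-1}$. Integrating by parts and using the Neumann condition gives
\[
	\frac{1}{p}\frac{d}{dt}\io u^p + (p-1)\io u^{p+m-3}|\na u|^2 = 0,
	\qquad \mbox{i.e.} \qquad
	\frac{d}{dt}\io u^p + \frac{4p(p-1)}{(p+m-1)^2}\io |\na u^\frac{p+m-1}{2}|^2 = 0 \quad \mbox{for } t>0.
\]
Here $p+m-1>0$ by (\ref{1.1}). The same identity with $s$ in place of $p$ shows that $t\mapsto \io u^s(\cdot,t)$ is nonincreasing. For $s=1$ it is constant, and there the dissipation term vanishes trivially. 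Hence $\|u(\cdot,t)\|_{L^s(\Om)}\le \|u_0\|_{L^s(\Om)}=:M$ for all $t>0$.

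Next set $w:=u^\frac{p+m-1}{2}$ and interpolate $\|w\|_{L^\frac{2p}{p+m-1}(\Om)}^2$, which equals $\big(\io u^p\big)^\frac{p+m-1}{p}$, between $\|\na w\|_{L^2(\Om)}$ and $\|w\|_{L^\frac{2s}{p+m-1}(\Om)}=M^\frac{p+m-1}{2}$. The Gagliardo--Nirenberg inequality on bounded domains includes a lower-order term and reads
\[
	\|w\|_{L^\frac{2p}{p+m-1}} \le C\|\na w\|_{L^2}^a\|w\|_{L^\frac{2s}{p+m-1}}^{1-a} + C\|w\|_{L^\frac{2s}{p+m-1}},
	\qquad
	a=\frac{\frac{p+m-1}{2s}-\frac{p+m-1}{2p}}{\frac{p+m-1}{2s}-\frac12+\frac1n}.
\]
The denominator is positive precisely because $s>\frac{n(1-m)}{2}$, since it equals $\frac{2s+n(m-1)+np-np}{2ns}\cdot\ldots$ after rearranging. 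One checks that $a\in(0,1)$ when $p>s$. In the case where $\frac{2s}{p+m-1}<1$ we use the quasi-norm version of Gagliardo--Nirenberg, which remains valid.

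Writing $y(t):=\io u^p(\cdot,t)$ and inverting, we obtain the superlinear differential inequality
\[
	y' \le -c\, M^{-\kappa}\, y^{1+\lambda} \qquad \mbox{whenever } y\ge C_1 M^p .
\]
The exponents are explicit:
\[
	1+\lambda=\frac{p+m-1}{pa}, \qquad \lambda=\frac{2s+n(m-1)}{n(p-s)}>0 .
\]
This can be computed from $\frac1a$, and it is where $s>\frac{n(1-m)}{2}$ enters again. The power $\kappa$ is fixed by scaling. An ODE comparison then gives
\[
	y(t)\le \max\big\{ (c\lambda M^{-\kappa} t)^{-1/\lambda},\ C_1M^p\big\}.
\]
Taking $p$-th roots yields
\[
	\|u(\cdot,t)\|_{L^p}\le C M^\frac{\kappa}{\lambda p} t^{-\frac{1}{\lambda p}} + CM .
\]
Finally, $\frac{1}{\lambda p}=\frac{n(1-\frac sp)}{2s+n(m-1)}$, which matches (\ref{1.2}). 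Consistency under the scaling $u\mapsto \mu u(\cdot,\mu^{m-1}t)$, which preserves (\ref{0}), forces the power of $M$ to be exactly $\frac{2s+\frac{n(m-1)s}{p}}{2s+n(m-1)}$. So rather than tracking $\kappa$ by hand, we verify it via this scaling.

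The main obstacle is the bookkeeping in the Gagliardo--Nirenberg step. First, we must ensure the admissibility conditions hold for all $p>s$, possibly with $p$ large compared to $n$, and even when $\frac{2s}{p+m-1}<1$. Second, the additive lower-order term must be handled correctly: it is the source of the summand $\Gamma_1\|u_0\|_{L^s}$. To handle it, we split into the regimes $y\ge C_1M^p$, where that term is absorbed, and $y<C_1M^p$, which is already of the desired form. A further check is that once $y$ drops below $C_1M^p$ it cannot exceed that level again, because $y$ is nonincreasing.
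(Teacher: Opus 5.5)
Your proposal is correct and follows essentially the paper's route: testing with $u^{p-1}$, monotonicity of $\io u^s$, the Gagliardo--Nirenberg inequality with exactly the paper's exponent $a=\iota$, and a superlinear ODE comparison giving the rate $\frac{1}{\lambda p}=\frac{n(1-\frac{s}{p})}{2s+n(m-1)}$. One small slip: the GN denominator is $\frac{2s+n(m-1)+n(p-s)}{2ns}$, but its positivity is unaffected. The remaining differences are technical. You absorb the lower-order term through the threshold $y\ge C_1M^p$ plus monotonicity of $y$, whereas the paper treats the inhomogeneous ODI $y'+ay^\alpha\le b$ by Lemma \ref{lem76} on $(0,K^{1-m}]$. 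You also fix the power of $\|u_0\|_{L^s(\Om)}$ by homogeneity under $u\mapsto\mu u$, rather than by the paper's explicit algebra; this indeed gives $\kappa=p\lambda+1-m$, hence $\frac{\kappa}{\lambda p}=\frac{2s+\frac{n(m-1)s}{p}}{2s+n(m-1)}$.
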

\proof
  According to (\ref{0}), we have
  \be{1.3}
	\frac{d}{dt} \io u^p = -p(p-1) \io u^{p+m-3} |\na u|^2
	= -c_1 \io\big|\na u^\frac{p+m-1}{2}\big|^2
	\qquad \mbox{for all } t>0,
  \ee
  where $c_1\equiv c_1(m,p):=\frac{4p(p-1)}{(p+m-1)^2}$ is well-defined and positive, because $p>s\ge 1$ and $p+m-1>0$ by
  (\ref{1.1}).
  Since the inequality $s\ge 1$ furthermore ensures that $\frac{d}{ds} \io u^s \le 0$ for all $t>0$ and hence
  \bas
	\|u(\cdot,t)\|_{L^s(\Om)} \le K:=\|u_0\|_{L^s(\Om)}
	\qquad \mbox{for all } t>0,
  \eas
  we may here rely on the Gagliardo-Nirenberg inequality, which namely provides $c_2=c_2(m,p,s)>0$ such that
  with
  \be{1.33}
	\iota:=\frac{n(p-s)(p+m-1)}{p\cdot [(2-n)s+n(p+m-1)]} \in (0,1)
  \ee
  we have
  \bas
	\|\vp\|_{L^\frac{2p}{p+m-1}(\Om)}^\frac{2p}{p+m-1}
	\le c_2 \|\na\vp\|_{L^2(\Om)}^\frac{2p\iota}{p+m-1} \|\vp\|_{L^\frac{2s}{p+m-1}(\Om)}^\frac{2p(1-\iota)}{p+m-1}
		+ c_2\|\vp\|_{L^\frac{2s}{p+m-1}(\Om)}^\frac{2p}{p+m-1}
	\qquad \mbox{for all } \vp\in W^{1,2}(\Om),
  \eas
  to see that
  \bas
	\io u^p
	&=& \big\| u^\frac{p+m-1}{2} \big\|_{L^\frac{2p}{p+m-1}(\Om)}^\frac{2p}{p+m-1}  \\
	&\le& c_2 \big\| \na u^\frac{p+m-1}{2} \big\|_{L^2(\Om)}^\frac{2p\iota}{p+m-1}
		\big\| u^\frac{p+m-1}{2}\big\|_{L^\frac{2s}{p+m-1}(\Om)}^\frac{2p(1-\iota)}{p+m-1}
	+ c_2 \big\| u^\frac{p+m-1}{2} \big\|_{L^\frac{2s}{p+m-1}(\Om)}^\frac{2p}{p+m-1} \\
	&=& c_2 \big\| \na u^\frac{p+m-1}{2} \big\|_{L^2(\Om)}^\frac{2p\iota}{p+m-1}
		\|u\|_{L^s(\Om)}^{p(1-\iota)}
	+ c_2 \|u\|_{L^s(\Om)}^p \\
	&\le& c_2 K^{p(1-\iota)} \big\| \na u^\frac{p+m-1}{2} \big\|_{L^2(\Om)}^\frac{2p\iota}{p+m-1} + c_2 K^p
	\qquad \mbox{for all } t>0.
  \eas
  Therefore,
  \bas
	\bigg\{ \io u^p \bigg\}^\frac{p+m-1}{p\iota}
	\le c_3 K^\frac{(p+m-1)(1-\iota)}{\iota} \big\| \na u^\frac{p+m-1}{2} \big\|_{L^2(\Om)}^2
	+ c_3 K^\frac{p+m-1}{\iota}
	\qquad \mbox{for all } t>0
  \eas
  with $c_3\equiv c_3(m,p,s):=(2c_2)^\frac{p+m-1}{p\iota}$, whence (\ref{1.3}) implies that
  \bas
	\frac{d}{dt} \io u^p
	+ \frac{c_1}{c_3 K^\frac{(p+m-1)(1-\iota)}{\iota}} \cdot \bigg\{ \io u^p \bigg\}^\frac{p+m-1}{p\iota}
	\le \frac{c_1 c_3 K^\frac{p+m-1}{\iota}}{c_3 K^\frac{(p+m-1)(1-\iota)}{\iota}}
	= c_1 K^{p+m-1}
	\qquad \mbox{for all } t>0.
  \eas
  In view of Lemma \ref{lem76}, this firstly entails that whenever $T>0$,
  \be{1.4}
	\io u^p(\cdot,t) \le A t^{-\frac{p\iota}{p+m-1-p\iota}}
	\qquad \mbox{for all } t\in (0,T],
  \ee
  where
  \bas
	& & \hs{-20mm}
	A\equiv A(m,p,s,T) \nn\\
	&:=& \max \Bigg\{
	\bigg( \frac{2p\iota}{p+m-1-p\iota} \cdot \frac{c_3 K^\frac{(p+m-1)(1-\iota)}{\iota}}{c_1} \bigg)^\frac{p\iota}{p+m-1-p\iota} \, , \,  \\
	& & \hs{30mm}
	\bigg( 2c_1 K^{p+m-1} \cdot \frac{c_3 K^\frac{(p+m-1)(1-\iota)}{\iota}}{c_1} T^\frac{p+m-1}{p+m-1-p\iota} \bigg)^\frac{p\iota}{p+m-1}
		\Bigg\} \nn\\
	&=& \max \Bigg\{
	\bigg( \frac{2p\iota c_3}{(p+m-1-p\iota)c_1} \bigg)^\frac{p\iota}{p+m-1-p\iota} \cdot K^\frac{p(p+m-1)(1-\iota)}{p+m-1-p\iota} \, , \,
	(2c_3)^\frac{p\iota}{p+m-1} T^\frac{p\iota}{p+m-1-p\iota} K^p \
	\Bigg\} \\
	&\le& c_4 K^\frac{p(p+m-1)(1-\iota)}{p+m-1-p\iota}
	+ c_4 T^\frac{p\iota}{p+m-1-p\iota} K^p
  \eas
  with $c_4\equiv c_4(m,p,s):=\big(\frac{2p\iota c_3}{(p+m-1-p\iota)c_1}\big)^\frac{p\iota}{p+m-1-p\iota} + (2c_3)^\frac{p\iota}{p+m-1}$.
  Choosing $T:=K^{1-m}$ here, and using that then
  $T^\frac{p\iota}{p+m-1-p\iota} K^p = K^\frac{p(p+m-1)(1-\iota)}{p+m-1-p\iota}$, from (\ref{1.4}) we obtain that
  \be{1.5}
	\io u^p(\cdot,t) \le 2c_4 K^\frac{p(p+m-1)(1-\iota)}{p+m-1-p\iota} t^{-\frac{p\iota}{p+m-1-p\iota}}
	\qquad \mbox{for all } t\in (0,K^{1-m}]
  \ee
  and hence, in particular,
  \bas
	\io u^p(\cdot,K^{1-m}) \le 2c_4 K^\frac{p(p+m-1)(1-\iota)}{p+m-1-p\iota} \cdot K^{-\frac{p\iota}{p+m-1-p\iota} \cdot (1-m)}
	= 2c_4 K^p.
  \eas
  As $\frac{d}{dt} \io u^p \le 0$ for all $t>0$ by (\ref{1.3}), the latter ensures that
  \bas
	\io u^p(\cdot,t) \le 2c_4 K^p
	\qquad \mbox{for all } t>K^{1-m},
  \eas
  which combined with (\ref{1.5}) entails that
  \bas
	\io u^p(\cdot,t)
	\le 2c_4 K^\frac{p(p+m-1)(1-\iota)}{p+m-1-p\iota} t^{-\frac{p\iota}{p+m-1-p\iota}} + 2c_4 K^p
	\qquad \mbox{for all } t>0.
  \eas
  Since straightforward simplification using (\ref{1.33}) shows that
  \bas
	p(p+m-1)(1-\iota)=\frac{(p+m-1)s \cdot [2p+n(m-1)]}{(2-n)s+n(p+m-1)}
  \eas
  and
  \bas
	p+m-1-p\iota = \frac{(p+m-1) \cdot [2s+n(m-1)]}{(2-n)s+n(p+m-1)},
  \eas
  this readily establishes (\ref{1.2}) with $\Gamma_1(m,p,s):=(2c_4)^\frac{1}{p}$.
\qed
In further developing this in Lemma \ref{lem2} into an $L^\infty$-estimate of the form claimed in Theorem \ref{theo67},
similar to the approach in \cite{grillo_jde2013} we will pursue a Moser-type recursive strategy. In contrast
to the iteration created to derive upper estimates for the neigboring case $m>1$ in \cite{grillo_jde2013},
however, our argument will avoid the use of different starting points in each step,
but rather consider fixed time intervals at the price of an inclusion of suitably chosen time-dependent weight functions.
A core motivation for this technical difference originates from the ambition to design a framework which upon merely slight
adaptation can be revisited in Section \ref{sect3} to derive uniform lower bounds as a major objective of this study.\abs
In order to keep our presentation below as slim as possible, let us separately state an elementary observation on inhomogeneous
linear difference inequalities already here:
\begin{lem}\label{lem78}
  Let $(z_k)_{k\ge 0} \subset\R$ and $(h_k)_{k\ge 1} \subset \R$ be such that
  \bas
	z_k \le 2z_{k-1} + h_k
	\qquad \mbox{for all } k\ge 1.
  \eas
  Then
  \bas
	z_k \le 2^k z_0 + \sum_{j=1}^k 2^{k-j} h_j
	\qquad \mbox{for all } k\ge 0.
  \eas
\end{lem}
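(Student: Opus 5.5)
We argue by induction on $k$; this is a direct unrolling of the recursion.

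For $k=0$ the asserted inequality reads $z_0\le 2^0 z_0$, with an empty sum equal to zero, so it holds trivially.

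Now suppose that for some $k\ge 1$ we already know that
\[
	z_{k-1}\le 2^{k-1}z_0+\sum_{j=1}^{k-1}2^{k-1-j}h_j .
\]
We combine the hypothesis $z_k\le 2z_{k-1}+h_k$ with this inequality. The multiplication by the positive factor $2$ preserves the inequality, which gives
\[
	z_k\le 2^k z_0+\sum_{j=1}^{k-1}2^{k-j}h_j + h_k .
\]
It remains to note that $h_k=2^{k-k}h_k$ is precisely the $j=k$ term of the sum $\sum_{j=1}^k 2^{k-j}h_j$. Hence the right-hand side equals $2^k z_0+\sum_{j=1}^k 2^{k-j}h_j$, and the claim holds for $k$.

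The argument uses only the positivity of the coefficient $2$ and places no sign restriction on the $h_j$. There is no genuine obstacle beyond correctly reindexing the sum in the induction step.
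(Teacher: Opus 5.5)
Your induction is correct and is exactly the argument the paper has in mind; the paper only remarks that the claim ``can be verified by straightforward induction,'' and your base case and your absorption of $h_k$ as the $j=k$ term of the sum supply those details.
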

\proof
  This can be verified by straightforward induction.
\qed
In fact, we are now prepared to achieve the essential part of the $L^\infty$ estimate in Theorem \ref{theo67};
the restriction to a setting of conveniently normalized initial data undertaken here will subsequently be removed
by means of a scaling argument.
\begin{lem}\label{lem2}
  Let $n\ge 1, m\in (-\infty,1)$ and $s\ge 1$ be such that
  \be{2.001}
	s>\frac{n(1-m)}{2}.
  \ee
  Then there exists $\Gamma_2=\Gamma_2(m,s)>0$ such that if $u_0\in C^0(\bom)$ is positive in $\bom$ with
  \be{2.01}
	\|u_0\|_{L^s(\Om)} = 1,
  \ee
  and $u\in C^0(\bom\times [0,\infty)) \cap C^{2,1}(\bom\times (0,\infty))$ is positive in $\bom\times [0,\infty)$ and such that
  (\ref{0}) is solved classically, then
  \be{2.1}
	\|u(\cdot,t)\|_{L^\infty(\Om)}
	\le \Gamma_2 t^{-\frac{n}{2s+n(m-1)}} + \Gamma_2
	\qquad \mbox{for all } t>0.
  \ee
\end{lem}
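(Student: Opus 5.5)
Because of the normalization (\ref{2.01}) and the nonincrease of $t\mapsto\io u^s(\cdot,t)$, it suffices to treat $t\in(0,1]$. Indeed, if the bound $\|u(\cdot,t)\|_{L^\infty(\Om)}\le c\,t^{-\frac{n}{2s+n(m-1)}}$ holds on $(0,1]$, then the same reasoning applied to the time-shifted solution $u(\cdot,\cdot+t-1)$ yields $\|u(\cdot,t)\|_{L^\infty(\Om)}\le c$ for $t>1$. The shifted initial datum has $L^s$ norm at most $1$ rather than exactly $1$. This is harmless because the estimate below depends only on an upper bound for $\|u_0\|_{L^s(\Om)}$.

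To set up the Moser iteration, fix $t_0\in(0,1]$ and use Lemma \ref{lem1} with some $p_0>\max\{s,1-m\}$. This gives $\io u^{p_0}(\cdot,t)\le c\,t^{-\frac{n(p_0-s)}{2s+n(m-1)}}$ for $t\le 1$. Then set $p_k:=2^k p_0$, together with increasing times $t_k:=t_0(1-2^{-k-1})$, or alternatively fixed time weights $\vp_k(t)=t^{\al_k}$ on $(0,t_0)$ with suitably chosen powers $\al_k$, which is the approach the text announces. Testing (\ref{0}) by $p_k u^{p_k-1}$ times the weight gives
\bas
	\frac{d}{dt}\Big\{\vp_k\io u^{p_k}\Big\} + c_1(p_k)\vp_k\io\big|\na u^{\frac{p_k+m-1}{2}}\big|^2 \le \vp_k'\io u^{p_k}.
\eas
The right-hand side is to be controlled by $\big(\io u^{p_{k-1}}\big)^2$-type quantities. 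For this I would use the Gagliardo--Nirenberg inequality applied to $\vp=u^{\frac{p_k+m-1}{2}}$, interpolating between $L^2$ of the gradient and $L^{\frac{2p_{k-1}}{p_k+m-1}}$, where the lower norm is exactly the previous $L^{p_{k-1}}$ quantity. Young's inequality then absorbs the gradient term. The outcome is a recursion of the form
\bas
	M_k:=\sup_{t\in(0,t_0)} t^{\beta_k}\io u^{p_k}(\cdot,t)\le b^k\,t_0^{\gamma_k} M_{k-1}^{\lam_k}+\dots,
\eas
where $\lam_k=\frac{p_k}{p_{k-1}}\cdot(1+o(1))\approx 2$ after correction by the $m$-shift. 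The weight exponents are chosen as $\beta_k=\frac{n(p_k-s)}{2s+n(m-1)}$, consistent with the target scaling. In logarithmic form, $z_k:=\frac{\ln M_k}{p_k}$ or simply $\ln M_k$, this becomes $z_k\le 2z_{k-1}+h_k$ with $h_k=O(k)$, up to perturbations because $\lam_k$ is not exactly $2$. Lemma \ref{lem78} then gives $\ln M_k\le 2^k(z_0+\sum_j 2^{-j}h_j)$, which is $\le C2^k$. Taking $p_k$-th roots and letting $k\to\infty$ yields $\|u(\cdot,t_0)\|_{L^\infty(\Om)}\le C t_0^{-\frac{n}{2s+n(m-1)}}$.

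The main obstacle is the exponent bookkeeping. The GN interpolation exponent $\iota_k$ and the Young exponents involve $p_k+m-1$ rather than $p_k$, so the amplification factor is $\frac{p_k+m-1}{p_{k-1}}\cdot(\ldots)$ rather than exactly $2$. I would handle this by writing $\lam_k=2+\eps_k$ with $\sum_k 2^{-k}|\eps_k|<\infty$, since $\eps_k=O(2^{-k})$, and verifying that the powers of $t_0$ telescope to exactly $-\frac{n}{2s+n(m-1)}$. The positivity condition $2s+n(m-1)>0$ from (\ref{2.001}) is what keeps the series of time exponents convergent and of the correct sign. The constants $c_1(p_k)^{-1}$ and the GN constants grow at most geometrically in $p_k$. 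As a result they contribute only the $O(k)$ terms $h_k$, which are summable after multiplication by $2^{-j}$.
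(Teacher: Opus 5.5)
Your proposal has a genuine gap, and it sits in the one step that carries the proof: passing from the weighted energy inequality to a recursion for $M_k$. In the concrete form you commit to, that step fails. Take your choices $p_k=2p_{k-1}$, $\varphi_k(t)=t^{\beta_k}$ with $\beta_k=\frac{n(p_k-s)}{2s+n(m-1)}$, and $w=u^{\frac{p_k+m-1}{2}}$. Apply Gagliardo--Nirenberg between $\|\nabla w\|_{L^2(\Omega)}$ and $\|w\|_{L^{2p_{k-1}/(p_k+m-1)}(\Omega)}$, then Young's inequality (possible since $p_k>n(1-m)$). This turns $\varphi_k'\int_\Omega u^{p_k}$ into $\frac{c_1}{2}\varphi_k\int_\Omega|\nabla w|^2$, a lower-order term, and a main term $C_k\,t^{\beta_k-\sigma_k}\big(\int_\Omega u^{p_{k-1}}\big)^{\lambda_k}$. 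Here $\sigma_k=\frac{(n+2)p_k+2n(m-1)}{2(p_k+n(m-1))}$ and $\lambda_k=\frac{2p_k+n(m-1)}{p_k+n(m-1)}$. Inserting $\int_\Omega u^{p_{k-1}}\le t^{-\beta_{k-1}}M_{k-1}$, one finds $\beta_k-\sigma_k-\lambda_k\beta_{k-1}=-1$ exactly. For instance, $n=1$, $m=0$, $s=1$, $p_0=2$, $k=1$ gives $3-\frac{5}{3}-\frac{7}{3}=-1$. So the source term is $C_kM_{k-1}^{\lambda_k}t^{-1}$. Once the gradient has been absorbed, bounding $\sup_{t\in(0,t_0)}t^{\beta_k}\int_\Omega u^{p_k}$ requires integrating this from $0$, and that integral diverges logarithmically. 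This is forced by scaling: $t^{\beta_k}\int_\Omega u^{p_k}$ is invariant under the $L^s$-preserving scaling $u\mapsto B^{\frac{n}{s}}u(Bx,B^{2+\frac{n(m-1)}{s}}t)$, which the homogeneous parts of all your inequalities respect. Its derivative can therefore only be controlled by $t^{-1}$ times invariant quantities. In particular, the factor $t_0^{\gamma_k}$ in your recursion cannot carry a nonzero power, and your claim that the $t_0$-powers telescope is never checked. The shrinking-interval variant is only mentioned, not carried out, and it does not match the $M_k$ you define.

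What is missing is a way to close the recursion without integrability at the left endpoint. The paper gets this by taking $p_k=2p_{k-1}+\frac{n(m-1)}{2}$ instead of $2^kp_0$. Then Gagliardo--Nirenberg raised to the power $\frac{n+2}{n}$ contains the gradient with exponent exactly $2$. So instead of absorbing $\|\nabla w\|_{L^2(\Omega)}^2$, the paper keeps it as superlinear damping: with $y_k=\int_\Omega u^{p_k}$, it gets $y_k'+c\,(t-\frac{t_0}{2})^{\frac{4\gamma_{k-1}}{n}}M_{k-1}^{-\frac{4}{n}}y_k^{\frac{n+2}{n}}\le C\,(t-\frac{t_0}{2})^{-2\gamma_{k-1}}M_{k-1}^2$. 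It then compares with $A_k(t-\frac{t_0}{2})^{-\gamma_k}$ via Lemma \ref{lem76}. This supersolution blows up at $\frac{t_0}{2}$, so no integrability is needed, and the time exponents even have slack $\frac{n+2}{2}$. The amplification is also exactly $M_{k-1}^2$, so Lemma \ref{lem78} applies verbatim. Finally, $t_0$ enters only through $M_0\le c\,t_0^{-\frac{n(p_0-s)}{2s+n(m-1)}}$ on the window $(\frac{t_0}{2},1]$; combined with $\frac{\gamma_k}{p_k}\to\frac{n}{2p_0+n(m-1)}$, this yields exactly $\frac{n}{2s+n(m-1)}$. Your Young-based scheme can be repaired, for example by giving the weights slack: use $t^{\beta_k}(t/t_0)^{\delta_k}$ with $\delta_0=0$ and $\delta_k-\lambda_k\delta_{k-1}\ge 1$, so that the integrals converge and the extra powers of $t_0$ cancel. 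That bookkeeping is precisely what the proposal leaves out. Your reduction to $t\le 1$ by a time shift is fine; the paper uses the maximum principle instead.
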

\proof
  We pick any $p_0>s$ such that
  \be{2.110}
	p_0>1-m
	\qquad \mbox{and} \qquad
	p_0>\frac{(n+2)(1-m)}{4},
  \ee
  and given $u_0$ and $u$ with the listed properties we observe that in line with (\ref{2.01}),
  Lemma \ref{lem1} shows that
  \bea{2.112}
	1 + \io u^{p_0}(\cdot,t)
	&\le& 1 + \Big\{ \Gamma_1 t^{-\frac{n(1-\frac{s}{p_0})}{2s+n(m-1)}} + \Gamma_1
		\Big\}^{p_0} \nn\\
	&\le& 1
	+ (2\Gamma_1)^{p_0} t^{-\frac{n(p_0-s)}{2s+n(m-1)}}
	+ (2\Gamma_1)^{p_0} \nn\\
	&\le& c_1 t^{-\frac{n(p_0-s)}{2s+n(m-1)}}
	\qquad \mbox{for all } t\in (0,1),
  \eea
  where $c_1\equiv c_1(m,s):=1+(2\Gamma_1)^{p_0}$ with $\Gamma_1=\Gamma_1(m,p_0,s)$ as provided there.\abs
  To derive (\ref{2.1}) from this, we let $\gamma_0:=0$ and, for positive integers $k$,
  \be{2.113}
	p_k:=2^k p_0 + (2^k-1) \cdot \frac{n(m-1)}{2}
	\qquad \mbox{and} \qquad
	\gamma_k:=(2^k-1) \cdot \frac{n}{2}
  \ee
  noting that then, as $p_0>s>\frac{n(1-m)}{2}$ by (\ref{2.001}), $(p_k)_{k\ge 0}$ and $(\gamma_k)_{k\ge 0}$ are increasing with
  \be{2.114}
	p_k=2p_{k-1} + \frac{n(m-1)}{2}
	\quad \mbox{and} \quad
	\gamma_k = 2\gamma_{k-1} + \frac{n}{2}
	\qquad \mbox{for all } k\ge 1.
  \ee
  Moreover, for fixed $t_0 \in (0,1]$
  and $k\ge 0$ we set
  \be{2.115}
	M_k\equiv M_k(t_0):=\sup_{t\in (\frac{t_0}{2},1]} \Bigg\{
		\Big( t-\frac{t_0}{2}\Big)^{\gamma_k} \cdot \bigg\{ 1 + \io u^{p_k}(\cdot,t) \bigg\} \Bigg\},
  \ee
  and observe that according to (\ref{2.112}),
  \bea{2.116}
	M_0
	&=& 1 + \sup_{t\in (\frac{t_0}{2},1]} \io u^{p_0}(\cdot,t) \nn\\
	&\le& 1 + \io u^{p_0}\Big(\cdot,\frac{t_0}{2}\Big) \nn\\
	&\le& c_2 t_0^{-\frac{n(p_0-s)}{2s+n(m-1)}}
  \eea
  with $c_2\equiv c_2(m,s):=2^\frac{n(p_0-s)}{2s+n(m-1)} c_1$, because $0<t\mapsto \io u^{p_0}(\cdot,t)$ is nonincreasing
  by (\ref{1.3}) and the fact that $p_0>s\ge 1$.
  To estimate $M_k$ for $k\ge 1$, we first use (\ref{0}) in estimating
  \be{2.117}
	\frac{d}{dt} \io u^{p_k}
	= - \frac{4p_k(p_k-1)}{(p_k+m-1)^2} \io \big| \na u^\frac{p_k+m-1}{2}\big|^2
	\le -c_3 \io \big|\na u^\frac{p_k+m-1}{2}\big|^2
	\qquad \mbox{for all } t>0,
  \ee
  with $c_3\equiv c_3(m):=\inf_{p\ge p_1} \frac{4p(p-1)}{(p+m-1)^2}$ being positive, because
  $p_1+m-1=2p_0+\frac{(n+2)(m-1)}{2} >0$ by (\ref{2.113}) and (\ref{2.110}).
  Now the integral on right-hand side can be bounded from below by introducing
  \be{2.1133}
	q_k:=\frac{2p_{k-1}}{p_k+m-1},
	\qquad k\ge 1,
  \ee
  so that, by (\ref{2.113}) and (\ref{2.114}),
  \be{2.1134}
	q_k
	= \frac{2^k p_0 + (2^k-2) \cdot \frac{n(m-1)}{2}}{2^k p_0 + (2^k-1) \cdot \frac{n(m-1)}{2} + m-1}
	> \frac{2^k p_0 + (2^k-1) \cdot\frac{n(m-1)}{2}}{2^k p_0 + (2^k-1) \cdot \frac{n(m-1)}{2} + m-1}
	> 1
	\qquad \mbox{for all } k\ge 1.
  \ee
  Apart from that, we note that the inequality $m<1$ along with (\ref{2.113}) ensures that
  \bas
	\frac{2p_k}{p_k+m-1}
	\le \frac{2p_1}{p_1+m-1}
	= \frac{4p_0+n(m-1)}{2p_0+\frac{(n+2)(m-1)}{2}}
	\qquad \mbox{for all } k\ge 1,
  \eas
  so that since
  \bas
	& & \hs{-18mm}
	\frac{2p_0+\frac{(n+2)(m-1)}{2}}{4p_0+n(m-1)} - \frac{n-2}{2n} \\
	&=& \frac{1}{2n\cdot [4p_0+n(m-1)]} \cdot \bigg\{
	2n\cdot\Big\{ 2p_0+\frac{(n+2)(m-1)}{2}\Big\} - (n-2)\cdot\big\{ 4p_0+n(m-1)\big\} \bigg\} \\
	&=& \frac{1}{2n\cdot [4p_0+n(m-1)]} \cdot \big\{
	4np_0+n(n+2)(m-1) - 4(n-2)p_0 -n(n-2)(m-1)\big\} \\
	&=& \frac{1}{2n\cdot [4p_0+n(m-1)]} \cdot \big\{
	8p_0+4n(m-1)\big\} \\
	&>& 0
  \eas
  according to the fact that $p_0>\frac{n(1-m)}{2}$, it follows that
  \be{2.1135}
	\sup_{k\ge 1} \frac{2p_k}{p_k+m-1} < \frac{2n}{(n-2)_+}.
  \ee
  As clearly $\frac{2p_{k-1}}{p_k+m-1} < \frac{2p_k}{p_k+m-1}$ for all $k\ge 1$, based on (\ref{2.1134}) and (\ref{2.1135})
  we may combine the Gagliardo-Nirenberg inequality with the H\"older inequality to find $c_4=c_4(m,s)>0$ such that for each
  $k\ge 1$ and any $\vp\in W^{1,2}(\Om)$,
  \bea{2.118}
	c_4 \|\vp\|_{L^\frac{2p_k}{p_k+m-1}(\Om)}^{\frac{n+2}{n}\cdot \frac{2p_k}{p_k+m-1}}
	\le c_3 \|\na\vp\|_{L^2(\Om)}^{\frac{n+2}{n} \cdot \frac{2p_k}{p_k+m-1} \cdot \iota_k}
		\|\vp\|_{L^\frac{2p_{k-1}}{p_k+m-1}(\Om)}^{\frac{n+2}{n} \cdot \frac{2p_k}{p_k+m-1} \cdot (1-\iota_k)}
		+ c_3 \|\vp\|_{L^{q_k}(\Om)}^{\frac{n+2}{n}\cdot\frac{2p_k}{p_k+m-1}},
  \eea
  with $\iota_k\in (0,1)$ satisfying
  \bas
	\iota_k=\frac{n(p_k+m-1)(p_k-p_{k-1})}{p_k\cdot [(2-n)p_{k-1}+n(p_k+m-1)]}
	\qquad \mbox{and} \qquad
	1-\iota_k=\frac{p_{k-1} \cdot [2p_k+n(m-1)]}{p_k\cdot [(2-n)p_{k-1}+n(p_k+m-1)]}.
  \eas
  Here, in line with (\ref{2.114}) we can simplify according to
  \bas
	\frac{n+2}{n} \cdot \frac{2p_k}{p_k+m-1} \cdot \iota_k
	&=& 2(n+2)\cdot\frac{p_k-p_{k-1}}{(2-n)p_{k-1}+n(p_k+m-1)} \\
	&=& 2(n+2)\cdot \frac{p_{k-1}+\frac{n(m-1)}{2}}{(2-n)p_{k-1} + n(2p_{k-1}+ \frac{(n+2)(m-1)}{2})}
	= 2
  \eas
  and
  \bas
	\frac{n+2}{n} \cdot \frac{2p_k}{p_k+m-1} \cdot (1-\iota_k)
	&=& \frac{2p_{k-1}}{p_k+m-1} \cdot \frac{n+2}{n} \cdot \frac{p_k}{p_{k-1}} \cdot (1-\iota_k) \\
	&=& \frac{2p_{k-1}}{p_k+m-1} \cdot \frac{n+2}{n} \cdot \frac{2p_k + n(m-1)}{(2-n)p_{k-1} +n(p_k+m-1)} \\
	&=& \frac{2p_{k-1}}{p_k+m-1} \cdot \frac{n+2}{n} \cdot \frac{4p_{k-1} +2n(m-1)}{(2-n)p_{k-1} +n(2p_{k-1}+\frac{(n+2)(m-1)}{2})} \\
	&=& \frac{2p_{k-1}}{p_k+m-1} \cdot \frac{4}{n}
  \eas
  for all $k\ge 1$, whence (\ref{2.118}) shows that
  \bas
	c_4 \cdot \bigg\{ \io u^{p_k} \bigg\}^\frac{n+2}{n}
	&=& c_4 \big\| u^\frac{p_k+m-1}{2} \big\|_{L^\frac{2p_k}{p_k+m-1}(\Om)}^{\frac{n+2}{n}\cdot\frac{2p_k}{p_k+m-1}} \\
	&\le& c_3 \big\| \na u^\frac{p_k+m-1}{2} \big\|_{L^2(\Om)}^2
		\big\| u^\frac{p_k+m-1}{2}\big\|_{L^\frac{2p_{k-1}}{p_k+m-1}(\Om)}^{\frac{2p_{k-1}}{p_k+m-1} \cdot \frac{4}{n}}
	+ c_3 \big\| u^\frac{p_k+m-1}{2} \big\|_{L^{q_k}(\Om)}^{\frac{n+2}{n}\cdot\frac{2p_k}{p_k+m-1}} \\
	&=& c_3 \big\| \na u^\frac{p_k+m-1}{2}\big\|_{L^2(\Om)}^2 \cdot
		\bigg\{ \io u^{p_{k-1}} \bigg\}^\frac{4}{n}
	+ c_3 \|u\|_{L^\frac{(p_k+m-1)q_k}{2}(\Om)}^\frac{(n+2)p_k}{n}
  \eas
  and thus, by (\ref{2.117}),
  \be{2.119}
	\frac{d}{dt} \io u^{p_k} + c_4\cdot\bigg\{ \io u^{p_{k-1}} \bigg\}^{-\frac{4}{n}} \cdot
		\bigg\{ \io u^{p_k} \bigg\}^\frac{n+2}{n}
	\le c_3\cdot \bigg\{ \io u^{p_{k-1}} \bigg\}^{-\frac{4}{n}} \cdot \|u\|_{L^\frac{(p_k+m-1)q_k}{2}(\Om)}^\frac{(n+2)p_k}{n}
  \ee
  for all $t>0$ and $k\ge 1$.
  Here, using that
  \be{2.120}
	1 + \io u^{p_{k-1}} \le \Big(t-\frac{t_0}{2}\Big)^{-\gamma_{k-1}} M_{k-1}
	\qquad \mbox{for all $t\in (\frac{t_0}{2},1)$ and } k\ge 1
  \ee
  due to (\ref{2.115}), again thanks to (\ref{2.114}) we obtain that
  \bas
	\bigg\{ \io u^{p_{k-1}} \bigg\}^{-\frac{4}{n}} \cdot \|u\|_{L^\frac{(p_k+m-1)q_k}{2}(\Om)}^\frac{(n+2)p_k}{n}
	&=& \bigg\{ \io u^{p_{k-1}}\bigg\}^{-\frac{4}{n}} \cdot \|u\|_{L^{p_{k-1}}(\Om)}^\frac{(n+2)p_k}{n} \nn\\
	&=& \bigg\{ \io u^{p_{k-1}} \bigg\}^{-\frac{4}{n}+\frac{(n+2)p_k}{np_{k-1}}} \nn\\
	&=& \bigg\{ \io u^{p_{k-1}} \bigg\}^{-\frac{4}{n}+\frac{n+2}{n}\cdot \frac{2p_{k-1}+\frac{n(m-1)}{2}}{p_{k-1}}} \nn\\
	&=& \bigg\{ \io u^{p_{k-1}} \bigg\}^{2+\frac{(n+2)(m-1)}{2p_{k-1}}} \nn\\
	&\le& \bigg\{ 1 + \io u^{p_{k-1}} \bigg\}^2 \nn\\
	&\le& \Big(t-\frac{t_0}{2}\Big)^{-2\gamma_{k-1}} M_{k-1}^2
	\qquad \mbox{for all $t\in (\frac{t_0}{2},1)$ and } k\ge 1.
  \eas
  In conjunction with (\ref{2.119}) and again (\ref{2.120}), this implies that
  for all $t\in (\frac{t_0}{2},1)$ and $k\ge 1$,
  \bas
	\frac{d}{dt} \io u^{p_k}
	+ c_4\cdot\Big(t-\frac{t_0}{2}\Big)^\frac{4\gamma_{k-1}}{n} M_{k-1}^{-\frac{4}{n}} \cdot
		\bigg\{ \io u^{p_k} \bigg\}^\frac{n+2}{n}
	\le c_3 \cdot \Big(t-\frac{t_0}{2}\Big)^{-2\gamma_{k-1}} M_{k-1}^2,
  \eas
  whence observing that for each fixed $k\ge 1$, writing
  $a:=c_4 M_{k-1}^{-\frac{4}{n}}, b:= c_3 M_{k-1}^2, \al:=\frac{n+2}{n}, \lam:=\frac{4\gamma_{k-1}}{n}$ and
  $\mu:=2\gamma_{k-1}$ we have
  \bas
	\frac{\lam+1}{\al-1}
	= \frac{\frac{4\gamma_{k-1}}{n}+1}{\frac{2}{n}}
	= 2\gamma_{k-1} + \frac{n}{2}
	= \gamma_k
  \eas
  and
  \bas
	\frac{\al+\lam}{\al-1}-\mu = \frac{n+2}{2}+ \frac{\frac{4\gamma_{k-1}}{n}}{\frac{2}{n}} - 2\gamma_{k-1}
	= \frac{n+2}{2}\ge 0
  \eas
  by (\ref{2.113}), from Lemma \ref{lem76} we infer that
  \be{2.121}
	\io u^{p_k}(\cdot,t) \le A_k \cdot\Big(t-\frac{t_0}{2}\Big)^{-\gamma_k}
	\qquad \mbox{for all $t\in (\frac{t_0}{2},1]$ and } k\ge 1,
  \ee
  where
  \bas
	A_k &:=&\max \Bigg\{ \bigg( \frac{2\gamma_k}{c_4 M_{k-1}^{-\frac{4}{n}}}\bigg)^\frac{n}{2} \, , \,
		\bigg( \frac{2c_3 M_{k-1}^2}{c_4 M_{k-1}^{-\frac{4}{n}}}\bigg)^\frac{n}{n+2} \Bigg\}\\
	&=& \max \bigg\{ \Big(\frac{2}{c_4}\Big)^\frac{n}{2} \cdot \gamma_k^\frac{n}{2} \, , \, \Big(\frac{2c_3}{c_4}\Big)^\frac{n}{n+2}
       		\bigg\} \cdot M_{k-1}^2.
  \eas
  By means of the rough estimate
  \bas
	\max \bigg\{ \Big(\frac{2}{c_4}\Big)^\frac{n}{2} \cdot \gamma_k^\frac{n}{2} \, , \, \Big(\frac{2c_3}{c_4}
      \Big)^\frac{n}{n+2}
      		\bigg\}
	&\le& \Big(\frac{2}{c_4}\Big)^\frac{n}{2} \gamma_k^\frac{n}{2} + \Big(\frac{2c_3}{c_4}
      \Big)^\frac{n}{n+2}
     \\
	&=& \Big(\frac{2}{c_4}\Big)^\frac{n}{2} \cdot \Big\{ (2^k-1)\cdot\frac{n}{2}\Big\}^\frac{n}{2} + \Big(\frac{2c_3}{c_4}
      \Big)^\frac{n}{n+2}
       \\
	&\le& \Big( \frac{n}{c_4}\Big)^\frac{n}{2} \cdot 2^\frac{nk}{2} + \Big(\frac{2c_3}{c_4}
      \Big)^\frac{n}{n+2}
      \\
	&\le& \Big\{ \Big(1+\frac{n}{c_4}\Big)^\frac{n}{2} \cdot 2^\frac{n}{2} + \Big(1+\frac{2c_3}{c_4}\Big)^\frac{n}{n+2} \Big\}^k,
  \eas
  from (\ref{2.121}) we thus obtain that in line with (\ref{2.115}),
  \bas
	M_k \le c_5^k M_{k-1}^2
  \eas
  and hence
  \bas
	\ln M_k \le 2\ln M_{k-1} + k\ln c_5
  \eas
  for all $k\ge 1$,
  where $c_5\equiv c_5(m,s):=1+\Big(1+\frac{n}{c_4}\Big)^\frac{n}{2} \cdot 2^\frac{n}{2} + \Big(1+\frac{2c_3}{c_4}\Big)^\frac{n}{n+2}$.
  An application of Lemma \ref{lem78} using the fact that $\sum_{j=1}^k j\cdot 2^{k-j} = 2^{k+1} - k - 2$ for all $k\ge 1$
  therefore shows that
  \bas
	M_k \le c_5^{2^{k+1}-k-2} M_0^{2^k}
	\le c_5^{2^{k+1}} M_0^{2^k}
	\qquad \mbox{for all } k\ge 1,
  \eas
  so that
  \bas
	\|u(\cdot,t)\|_{L^{p_k}(\Om)}
	&\le& \Big\{ \Big(t-\frac{t_0}{2}\Big)^{-\gamma_k} M_k\Big\}^\frac{1}{p_k} \nn\\
	&\le& c_5^\frac{2^{k+1}}{p_k} M_0^\frac{2^k}{p_k} \cdot \Big(t-\frac{t_0}{2}\Big)^{-\frac{\gamma_k}{p_k}}
	\qquad \mbox{for all $t\in (\frac{t_0}{2},1]$ and } k\ge 1.
  \eas
  Since (\ref{2.113}) ensures that
  \bas
	\frac{2^k}{p_k}
	= \frac{2^k}{2^k p_0 + (2^k-1)\cdot\frac{n(m-1)}{2}}
	\to \frac{1}{p_0+\frac{n(m-1)}{2}}
	= \frac{2}{2p_0+n(m-1)}
  \eas
  and
  \bas
	\frac{\gamma_k}{p_k}
	= \frac{(2^k-1)\cdot\frac{n}{2}}{2^k p_0 + (2^k-1)\cdot\frac{n(m-1)}{2}}
	\to \frac{\frac{n}{2}}{p_0+\frac{n(m-1)}{2}}
	= \frac{n}{2p_0+n(m-1)}
  \eas
  as $k\to\infty$, in conjunction with (\ref{2.116}) this implies that
  for all $t\in (\frac{t_0}{2},1]$,
  \bea{2.999}
	\|u(\cdot,t)\|_{L^\infty(\Om)}
	&\le& c_5^\frac{4}{2p_0+n(m-1)} M_0^\frac{2}{2p_0+n(m-1)} \cdot \Big(t-\frac{t_0}{2}\Big)^{-\frac{n}{2p_0+n(m-1)}} \nn\\
	&\le& c_2^\frac{2}{2p_0+n(m-1)} c_5^\frac{4}{2p_0+n(m-1)} t_0^{-\frac{2n(p_0-s)}{[2p_0+n(m-1)]\cdot [ 2s+n(m-1)]}}
		\cdot \Big(t-\frac{t_0}{2}\Big)^{-\frac{n}{2p_0+n(m-1)}}.
  \eea
  Observing that
  \bas
	& & \hs{-20mm}
	\frac{2n(p_0-s)}{[2p_0+n(m-1)]\cdot [ 2s+n(m-1)]} + \frac{n}{2p_0+n(m-1)} \nn\\
	&=& \frac{n}{2p_0+n(m-1)} \cdot \frac{2(p_0-s) + [2s+n(m-1)]}{2s+n(m-1)} \\
	&=& \frac{n}{2p_0+n(m-1)} \cdot \frac{2p_0+n(m-1)}{2s+n(m-1)}
	= \frac{n}{2s+n(m-1)},
  \eas
  upon evaluating (\ref{2.999}) at $t=t_0$ we obtain that
  \bas
	\|u(\cdot,t_0)\|_{L^\infty(\Om)}
	\le c_6 t_0^{-\frac{n}{2s+n(m-1)}}
	\qquad \mbox{for all } t_0\in (0,1],
  \eas
  where $c_6\equiv c_6(m,s):=2^\frac{n}{2p_0+n(m-1)} c_2^\frac{2}{2p_0+n(m-1)} c_5^\frac{4}{2p_0+n(m-1)}$.
  Since $0<t\mapsto \|u(\cdot,t)\|_{L^\infty(\Om)}$ is nonincreasing by the maximum principle, this especially entails that
  \bas
	\|u(\cdot,t)\|_{L^\infty(\Om)}
	\le \|u(\cdot,1)\|_{L^\infty(\Om)}
	\le c_6
	\qquad \mbox{for all } t>1,
  \eas
  so that the inequality in (\ref{2.1}) follows for arbitrary $t>0$ if we let $\Gamma_2(m,s):=c_6$.
\qed
Indeed, for smooth solutions the above quite immediately implies the following.
\begin{lem}\label{lem3}
  Let $n\ge 1, m\in (-\infty,1)$ and $s\ge 1$ be such that $s>\frac{n(1-m)}{2}$.
  Then whenever $u_0\in C^0(\bom)$ is positive and $u\in C^0(\bom\times [0,\infty)) \cap C^{2,1}(\bom\times (0,\infty))$ is
  positive in $\bom\times [0,\infty)$ and solves (\ref{0}), the estimate in (\ref{2.1}) is valid for all $t>0$.
\end{lem}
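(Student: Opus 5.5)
The plan is to reduce Lemma \ref{lem3} to Lemma \ref{lem2} through the scaling invariance of (\ref{0}). The target inequality is the form of (\ref{2.1}) adapted to general data, namely
\[
	\|u(\cdot,t)\|_{L^\infty(\Om)}
	\le \Gamma_2 K^\frac{2s}{2s+n(m-1)} t^{-\frac{n}{2s+n(m-1)}} + \Gamma_2 K
	\qquad \mbox{for all } t>0,
	\qquad K:=\|u_0\|_{L^s(\Om)}.
\]
This is exactly the case $p=\infty$ of (\ref{67.2}), and it reduces to (\ref{2.1}) when $K=1$.

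First, note that $K>0$, since $u_0$ is positive and continuous. Next, for any $\lambda>0$ the function $v(x,\tau):=\lambda^{-1}u(x,\lambda^{1-m}\tau)$ again solves (\ref{0}) classically. Indeed, $v_\tau=\lambda^{-m}u_t$ and $\na\cdot(v^{m-1}\na v)=\lambda^{-m}\na\cdot(u^{m-1}\na u)$, and the homogeneous Neumann condition is unaffected. Choosing $\lambda:=K$ gives $\|v(\cdot,0)\|_{L^s(\Om)}=1$. The function $v$ also keeps all regularity and positivity properties required in Lemma \ref{lem2}: it lies in $C^0(\bom\times[0,\infty))\cap C^{2,1}(\bom\times(0,\infty))$ and is positive on $\bom\times[0,\infty)$.

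Lemma \ref{lem2} then yields $\|v(\cdot,\tau)\|_{L^\infty(\Om)}\le\Gamma_2\tau^{-\frac{n}{2s+n(m-1)}}+\Gamma_2$ for all $\tau>0$. Since $u(x,t)=Kv(x,K^{m-1}t)$, this translates into
\[
	\|u(\cdot,t)\|_{L^\infty(\Om)}\le \Gamma_2 K\cdot K^{\frac{(1-m)n}{2s+n(m-1)}}\,t^{-\frac{n}{2s+n(m-1)}}+\Gamma_2K .
\]
It remains only to check the exponent: $1+\frac{n(1-m)}{2s+n(m-1)}=\frac{2s}{2s+n(m-1)}$. Here $2s+n(m-1)>0$ by the assumption $s>\frac{n(1-m)}{2}$.

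No step is a genuine obstacle. The only points needing care are the correct time rescaling exponent $\lambda^{1-m}$ and the verification that the positivity and regularity hypotheses transfer to $v$, both of which are immediate.
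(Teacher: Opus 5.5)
Your proposal is correct and follows the paper's proof: it uses the same rescaling $v(x,\tau)=\frac{1}{K}\,u(x,K^{1-m}\tau)$ with $K=\|u_0\|_{L^s(\Om)}$, applies Lemma \ref{lem2} to $v$, and transforms back. This yields the same $K$-dependent bound, i.e.\ the $p=\infty$ case of (\ref{67.2}), which you rightly took to be the intended meaning of ``(\ref{2.1})'' for non-normalized initial data.
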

\proof
  We let $K:=\|u_0\|_{L^s(\Om)}$ and $v(x,\tau):=\frac{1}{K}\cdot u(x,K^{1-m}\tau)$ for $x\in\bom$ and $\tau\ge 0$.
  Then $v_0:=v(\cdot,0)$ is positive in $\bom$ and satisfies $\|v_0\|_{L^s(\Om)}=1$, so that since
  $v_\tau=\na\cdot (v^{m-1}\na v)$ in $\Om\times (0,\infty)$ with $\frac{\pa v}{\pa\nu}=0$ on $\pO\times (0,\infty)$
  by (\ref{0}), Lemma \ref{lem2} applies so as to ensure that
  \bas
	\|v(\cdot,\tau)\|_{L^\infty(\Om)}
	\le \Gamma_2 \tau^{-\frac{n}{2s+n(m-1)}} + \Gamma_2
	\qquad \mbox{for all } \tau>0.
  \eas
  As thus
  \bas
	\|u(\cdot,t)\|_{L^\infty(\Om)}
	&=& K \|v(\cdot,K^{m-1}t)\|_{L^\infty(\Om)} \\
	&\le& K\Gamma_2 \cdot (K^{m-1} t)^{-\frac{n}{2s+n(m-1)}} + K\Gamma_2 \\
	&=& \Gamma_2 K^{1-\frac{n(m-1)}{2s+n(m-1)}} t^{-\frac{n}{2s+n(m-1)}} + K\Gamma_2
	\qquad \mbox{for all } t>0,
  \eas
  this readily yields the claim.
\qed
\mysection{Lower bounds. Positivity}\label{sect3}
The analysis in this section will be centered around tracking the evolution of $\io u^{-p}$ for large $p>0$, again resorting to
smooth trajectories first.
At the initial level of a corresponding Moser iteration, however, we here need to adequatly cope with the circumstance that,
unlike in Lemma \ref{lem1}, neither of these integrals can be expected to satisfy appropriate upper bounds near $t=0$ when
finally the inclusion of merely nonnegative initial data, possibly with compact support in $\Om$, is striven for.
An alternative type of appropriate information at the lower end of a Gagliardo-Nirenberg interpolation in the style of that
from Lemma \ref{lem1} is prepared by a variant of the classical Poincar\'e inequality which, in fact, forms
the origin of the factors $\zeta^\star$ that finally appear in Theorem \ref{theo68}.
Although the following statement in this direction might be well-known, since we could not find a precise reference
we include a brief proof in an appendix below, especially in order to let the origin of the dimension dependence
in (\ref{11.2})-(\ref{zeta}) become transparent.
\begin{lem}\label{lem11}
  There exists $\kappa=\kappa(\Om)>0$ such that whenever $\eta\in (0,|\Om|]$ and $\vp\in W^{1,\infty}(\Om)$ are such that
  \be{11.1}
	\big| \{\vp=0\} \big| \ge \eta,
  \ee
  it follows that
  \be{11.2}
	\io \vp^2 \le \kappa \cdot \zeta_1(\eta) \io |\na\vp|^2,
  \ee
  where $\zeta_1$ is as accordingly defined through (\ref{zeta}).
\end{lem}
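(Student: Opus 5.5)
The plan is to reduce the claim to the Sobolev (or Moser–Trudinger/Gagliardo–Nirenberg) inequality combined with H\"older's inequality on the support of $\vp$. The idea is that the mean-value term in the Sobolev–Poincar\'e inequality is controlled by the size of the zero set.

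\textbf{Step 1: Sobolev–Poincar\'e with mean.} By the Sobolev–Poincar\'e inequality on the smoothly bounded domain $\Om$ there is $c_1>0$ such that
\[
\|\vp-\ov{\vp}\|_{L^{r}(\Om)}\le c_1\|\na\vp\|_{L^2(\Om)},\qquad \ov{\vp}:=\frac{1}{|\Om|}\io\vp,
\]
where the exponent $r$ depends on the dimension.
\begin{itemize}
\item If $n\ge3$, take $r=\frac{2n}{n-2}$.
\item If $n=2$, take any finite $r$; here the constant $c_1=c_1(r)$ must be tracked, and it grows like $\sqrt{r}$ by the Trudinger–Moser inequality.
\item If $n=1$, take $r=\infty$.
\end{itemize}

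\textbf{Step 2: removing the mean via the zero set.} Let $Z:=\{\vp=0\}$, so that $|Z|\ge\eta$. On $Z$ we have $\vp-\ov{\vp}=-\ov{\vp}$, hence
\[
|\ov{\vp}|\,\eta^{1/r}\le\|\vp-\ov{\vp}\|_{L^r(Z)}\le c_1\|\na\vp\|_{L^2(\Om)}.
\]
Then
\[
\|\vp\|_{L^2(\Om)}\le|\Om|^{\frac12-\frac1r}\|\vp-\ov{\vp}\|_{L^r(\Om)}+|\Om|^{1/2}|\ov{\vp}|\le c_2\big(1+\eta^{-1/r}\big)\|\na\vp\|_{L^2(\Om)}.
\]
Squaring gives $\io\vp^2\le c_3(1+\eta^{-2/r})\io|\na\vp|^2$, and since $\eta\le|\Om|$ the constant term is absorbed into $\eta^{-2/r}$ up to a factor depending only on $\Om$. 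This settles each case as follows.
\begin{itemize}
\item For $n\ge3$, $\frac2r=\frac{n-2}{n}$, which is exactly $\zeta_1(\eta)=\eta^{-(n-2)/n}$.
\item For $n=1$, $r=\infty$ yields a constant, matching $\zeta_1\equiv1$.
\end{itemize}

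\textbf{Step 3: the case $n=2$ (main obstacle).} For $n=2$ the choice of $r$ must be optimized, since a fixed $r$ only gives $\eta^{-2/r}$, which is worse than logarithmic. With the sharp growth $c_1(r)\le C\sqrt r$ (from the exponential integrability in Trudinger's inequality, applied to $\vp-\ov{\vp}$, or via the Gagliardo–Nirenberg constant for $W^{1,2}\hookrightarrow L^r$ in two dimensions), we obtain
\[
\io\vp^2\le Cr\,\big(1+\eta^{-2/r}\big)\io|\na\vp|^2.
\]
Choosing $r:=\max\{2,\ln\frac{|\Om|}{\eta}\}$ makes $\eta^{-2/r}\le C$, so the factor becomes $C(1+\ln_+\frac1\eta)$, up to constants depending on $|\Om|$.

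The delicate point is justifying the $\sqrt r$ growth of the Sobolev–Poincar\'e constant on a general smooth bounded domain. I would obtain it from the Moser–Trudinger inequality in the form
\[
\io e^{\beta|\vp-\ov{\vp}|^2/\|\na\vp\|_2^2}\le C,
\]
which holds on smooth bounded domains with some $\beta>0$, and then expand the exponential to bound the $L^r$ norms by $C\sqrt r\,\|\na\vp\|_2$. Alternatively, one can argue directly on $Z$ with the exponential bound: writing $f:=|\vp-\ov{\vp}|/\|\na\vp\|_2$, we have $f\equiv|\ov{\vp}|/\|\na\vp\|_2$ on $Z$, so $\eta\,e^{\beta|\ov{\vp}|^2/\|\na\vp\|_2^2}\le C$. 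This gives $|\ov{\vp}|^2\le C\ln\frac{C}{\eta}\,\|\na\vp\|_2^2$ immediately, which is cleaner, and combined with the plain Poincar\'e inequality it yields the result.
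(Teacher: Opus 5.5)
Your proof is correct, but it follows a genuinely different route from the paper's.

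The paper argues via capacities. It quotes Maz'ya's isocapacitary bound $\mathcal{C}_2(E,G)\ge\kappa_1/\zeta_1(\eta)$ for condensers with $|E|\ge\eta$ and $|G|\le\frac{|\Om|}{2}$ (Lemma \ref{lem52}), and picks a median level $d$ of $\vp$. It then tests the condenser $(\{\vp=0\},\{\vp<d\})$ with the quasicontinuous function $(1-\frac{\vp}{d})_+$ to get $\io|\na\vp|^2\ge\kappa_1 d^2/\zeta_1(\eta)$. Finally it passes from median to mean using the classical $L^2$ Poincar\'e inequality.

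You instead bound the mean directly by evaluating the Sobolev--Poincar\'e inequality on the zero set $Z$, where $\vp-\ov{\vp}\equiv-\ov{\vp}$. In each dimension you use the optimal target space: $L^{2n/(n-2)}$, the Trudinger--Moser exponential class, and $L^\infty$. The size of $Z$ as seen by that space, namely $\eta^{(n-2)/(2n)}$, $(\ln\frac{C}{\eta})^{-1/2}$ and $1$, gives exactly $\zeta_1(\eta)$ after inverting and squaring.

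What each route buys:
\begin{itemize}
\item Your route uses only textbook embeddings and makes the dimension dependence fully transparent. It needs no quasicontinuity, no median and no sign information on $\vp$; the paper's choice of $d\ge0$ tacitly uses $\vp\ge0$, which is harmless via $\vp=\vp_+-\vp_-$.
\item Its costs are the case distinction and, for $n=2$, the Trudinger--Moser inequality for zero-mean functions in $W^{1,2}(\Om)$. That inequality follows from Trudinger's inequality on a ball after extension and Poincar\'e.
\item The paper handles all $n$ uniformly through a single cited capacity estimate, which encodes the same Sobolev-type information in isocapacitary form.
\end{itemize}

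Of your two variants for $n=2$, the second (exponential integrability evaluated on $Z$) is the cleaner one, since it avoids tracking the $\sqrt r$ growth of the embedding constants. There you only need to set aside the trivial case $\na\vp\equiv0$, in which connectedness of $\Om$ and $|Z|>0$ force $\vp\equiv0$.
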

We can thereby establish the following counterpart of Lemma \ref{lem1}.
\begin{lem}\label{lem12}
  Let $n\ge 1, m\in (-\infty,1), q\in (0,1]$ and $r\ge 1$ be such that $q<r$, and let $p>1$.
  Then there exists $\Gamma_3=\Gamma_3(m,p,q,r)>0$ such that if $u_0\in C^0(\bom)$ is positive in $\bom$ and
  $u\in C^0(\bom\times [0,\infty)) \cap C^{2,1}(\bom\times (0,\infty))$ is positive in $\bom\times (0,\infty)$ and solves (\ref{0})
  in the classical sense, then
  \be{12.1}
	\Big\| \frac{1}{u(\cdot,t)} \Big\|_{L^p(\Om)}
	\le \Gamma_3 \cdot \zeta_{\frac{qr}{r-q}} \Big( \mbox{$\frac{\|u_0\|_{L^q(\Om)}}{\|u_0\|_{L^r(\Om)}}$} \Big) \cdot t^{-\frac{1}{1-m}}
	+ \Gamma_3 \cdot \frac{1}{\|u_0\|_{L^q(\Om)}}
	\qquad \mbox{for all } t>0,
  \ee
  where $\zeta_{\frac{qr}{r-q}}$ is as in (\ref{zeta}).
\end{lem}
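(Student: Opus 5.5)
The plan is to mirror Lemma \ref{lem1}, but for the quantity $y(t):=\io u^{-p}(\cdot,t)$. For the lower end of the interpolation, the $L^s$-bound used there will be replaced by the zero-set Poincar\'e inequality from Lemma \ref{lem11}. Throughout we write $Q:=\|u_0\|_{L^q(\Om)}$, $R:=\|u_0\|_{L^r(\Om)}$ and $a:=\frac{p+1-m}{2}>0$.

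\textbf{Step 1: energy identity.} Testing (\ref{0}) against $u^{-p-1}$ gives
\[
	\frac{d}{dt}\io u^{-p} = -p(p+1)\io u^{m-p-3}|\na u|^2 = -c_1\io \big|\na u^{-a}\big|^2 ,
\]
with $c_1=c_1(m,p)>0$. In particular $y$ is nonincreasing.

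\textbf{Step 2: a measure bound that persists for all times.} Since $q\le 1\le r$, the map $t\mapsto\io u^q$ is nondecreasing, because $\xi\mapsto\xi^q$ is concave. Likewise $t\mapsto\io u^r$ is nonincreasing. Hence $\|u(\cdot,t)\|_{L^q(\Om)}\ge Q$ and $\|u(\cdot,t)\|_{L^r(\Om)}\le R$ for all $t>0$. We fix the level $\theta:=(2|\Om|)^{-1/q}Q$. Then $\int_{\{u>\theta\}}u^q\ge \frac12 Q^q$. H\"older's inequality on $\{u>\theta\}$ bounds the left-hand side by $|\{u>\theta\}|^{1-\frac qr}R^q$. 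This yields
\[
	\big|\{u(\cdot,t)>\theta\}\big|\ge \eta:=c_2\,(Q/R)^{\frac{qr}{r-q}}\quad\mbox{for all } t>0 .
\]
Note that $\eta\le|\Om|$ after adjusting $c_2$, since $Q\le c R$. For this $\eta$, the factor $\zeta_1(\eta)$ is comparable to $\zeta_{\frac{qr}{r-q}}(Q/R)=:\zeta$; for $n=2$ this uses $\ln_+$ of a power.

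\textbf{Step 3: Poincar\'e applied to a truncation.} We set $\vp:=(u^{-a}-\theta^{-a})_+$. This function vanishes on $\{u>\theta\}$, and $|\na\vp|\le|\na u^{-a}|$. Lemma \ref{lem11} therefore gives $\io\vp^2\le\kappa\zeta\io|\na u^{-a}|^2$. Since $u^{-p}=(u^{-a})^{\sigma}$ with $\sigma=\frac{2p}{p+1-m}<2$, and $u^{-a}\le\vp+\theta^{-a}$, H\"older's inequality yields
\[
	y^{2/\sigma}\le c_3\,\zeta\io|\na u^{-a}|^2 + c_3\theta^{-2a}.
\]
Inserting this into Step 1 produces the ODI
\[
	y'+\frac{c_4}{\zeta}\,y^{\frac{p+1-m}{p}}\le \frac{c_5}{\zeta}\,\theta^{-(p+1-m)} .
\]

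\textbf{Step 4: ODE comparison.} Lemma \ref{lem76} applies with $\al=\frac{p+1-m}{p}$, $\lam=\mu=0$; it is applied as in Lemma \ref{lem1}, on $(0,T]$ with a suitable $T$, followed by monotonicity of $y$ for larger times. Since $\frac1{\al-1}=\frac p{1-m}$, this gives
\[
	y(t)\le c\,\zeta^{\frac p{1-m}}t^{-\frac p{1-m}}+c\,\theta^{-p}.
\]
Taking $p$-th roots produces the claimed structure, with $\theta^{-1}\simeq Q^{-1}$ in the second term.

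\textbf{Main obstacle.} The delicate point is the power of $\zeta$: the naive ODE argument yields $\zeta^{1/(1-m)}$ rather than $\zeta^1$. For $m\le0$ this is harmless, since $\zeta$ is bounded below by a positive constant depending only on the data (as $Q/R\le C$), so $\zeta^{1/(1-m)}\le C\zeta$. For $m\in(0,1)$ I would try two remedies. The first is to choose the truncated quantity differently, for instance applying Lemma \ref{lem11} to a smaller power of $u^{-1}$ combined with Gagliardo--Nirenberg, so that $\zeta$ enters the ODI with exponent $1-m$. The second is to exploit $\zeta_\vt(\xi)^\lambda=\zeta_{\lambda\vt}(\xi)$ for $n\ge3$ and absorb the exponent into the definition of the constants. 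If neither works, the $\zeta$-exponent in the statement would have to be weakened accordingly.
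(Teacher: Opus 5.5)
Your Steps 1--4 reproduce the paper's proof essentially line by line. Your $\theta$ is the paper's level $\del=(2|\Om|)^{-1/q}\|u_0\|_{L^q(\Om)}$. You obtain the same zero-set bound $\eta=2^{-\frac{r}{r-q}}(\|u_0\|_{L^q(\Om)}/\|u_0\|_{L^r(\Om)})^{\frac{qr}{r-q}}$ from the monotonicity of $\io u^q$ and $\io u^r$. You feed the same truncation $(u^{-\frac{p-m+1}{2}}-\del^{-\frac{p-m+1}{2}})_+$ into Lemma \ref{lem11}, and you arrive at the same ODI $y'+\frac{c}{\zeta_1(\eta)}y^{\frac{p-m+1}{p}}\le\frac{c'}{\zeta_1(\eta)}\del^{-(p-m+1)}$. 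The only differences are cosmetic. You use $u^{-a}\le\vp+\theta^{-a}$ where the paper splits at $\del/2$. You also use Lemma \ref{lem76} plus monotonicity where the paper uses the constant-coefficient Lemma \ref{lem75}, which gives both terms for all $t>0$ in one step.

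The obstacle you isolate is real, and the paper does not overcome it either. Its application of Lemma \ref{lem75} yields exactly $\|1/u(\cdot,t)\|_{L^p(\Om)}\le 2^{\frac1p}\big(\frac{p\zeta_1(\eta)}{(1-m)c_3}\big)^{\frac{1}{1-m}}t^{-\frac{1}{1-m}}+C\del^{-1}$. Its concluding sentence then asserts (\ref{12.1}) citing only $\zeta_1(\eta)\simeq\zeta_{\frac{qr}{r-q}}(\eta_0)$, which does not touch the exponent $\frac{1}{1-m}$. So both arguments prove (\ref{12.1}) with $\zeta_{\frac{qr}{r-q}}$ replaced by $\zeta_{\frac{qr}{r-q}}^{\max\{1,\frac{1}{1-m}\}}$. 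Via your lower bound on $\zeta$, this gives the lemma as stated for $m\le 0$ or $n=1$. It does not give it for $m\in(0,1)$ with $n\ge 2$, where $\zeta$ is unbounded as $\|u_0\|_{L^q(\Om)}/\|u_0\|_{L^r(\Om)}\to 0$.

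Neither of your remedies closes this case. Writing $\zeta_\vt^{\frac{1}{1-m}}=\zeta_{\frac{\vt}{1-m}}$, which is valid only for $n\ge 3$, changes the subscript. That proves a weaker inequality rather than absorbing anything into $\Gamma_3$. Varying the truncated power also fails within this scheme. On $\{u<\del\}$, the gradient of $u^{-b}$ is controlled by the dissipation only for $b\le\frac{p-m+1}{2}$, and smaller $b$ degrades the time rate. For $b=\frac{p-m+1}{2}$, the quadratic inequality of Lemma \ref{lem11} makes $\zeta_1(\eta)$ enter the ODI linearly. Comparison with $y'=-\frac{c}{\zeta_1(\eta)}y^{\frac{p-m+1}{p}}$ then gives at best a bound of order $(\zeta_1(\eta)/t)^{\frac{p}{1-m}}$ for $\io u^{-p}$.

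Your fallback is therefore the accurate conclusion: as proved, the lemma should carry $\zeta^{\frac{1}{1-m}}$ for $m\in(0,1)$. This in turn affects the choice of $K$ and the bound $M_0\le 2\Gamma_3\zeta^\star$ in Lemma \ref{lem13}, and hence the resulting powers of $\zeta^\star$, though not the qualitative Proposition \ref{prop66}. The first-power version would need a genuinely new ingredient, which neither you nor the paper supplies.
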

\proof
  Integrating by parts in (\ref{0}) we see that
  \bea{12.11}
	\frac{d}{dt} \io u^{-p}
	&=& -p \io u^{-p-1} \na\cdot (u^{m-1}\na u)
	= - p(p+1) \io u^{-p+m-3} |\na u|^2 \nn\\
	&=& - c_1 \io \big|\na u^{-\frac{p-m+1}{2}}\big|^2
	\qquad \mbox{for all } t>0,
  \eea
  where $c_1\equiv c_1(m,p):=\frac{4p(p+1)}{(p-m+1)^2}$ is well-defined and positive, because $p>0>m-1$.
  As from (\ref{0}) we furthermore know that $\frac{d}{st} \io u^r \le 0$ and $\frac{d}{dt} \io u^q \ge 0$ due to the inequalities
  $r\ge 1$ and $q\le 1$, abbreviating $K_\sigma:=\|u_0\|_{L^\sig(\Om)}$ for $\sig>0$ we can use the H\"older inequality to
  see that for all $\del>0$ and $t>0$,
  \bas
	K_q^q
	= \io u_0^q
	\le \io u^q
	= \int_{\{u\le\del\}} u^q + \int_{\{u>\del\}} u^q
	\le \del^q |\Om|
	+ \bigg\{ \io u^r\bigg\}^\frac{q}{r} \cdot \big|\{u>\del\}\big|^\frac{r-q}{r},
  \eas
  which upon choosing
  \be{12.2}
	\del:=\frac{K_q}{(2|\Om|)^\frac{1}{q}}
  \ee
  implies that
  \bas
	\frac{1}{2} K_q^q \le K_r^q \cdot \big| \{u>\del\}\big|^\frac{r-q}{r}
	\qquad \mbox{for all } t>0,
  \eas
  that is,
  \be{12.3}
	\big| \{u>\del\}\big|
	\ge \eta:=\Big(\frac{K_q^q}{2K_r^q}\Big)^\frac{r}{r-q}
	\qquad \mbox{for all } t>0.
  \ee
  For fixed $t>0$, the function $\vp:=\big(u^{-\frac{p-m+1}{2}} - \del^{-\frac{p-m+1}{2}}\big)_+(\cdot,t)$ thus lies in
  $W^{1,\infty}(\Om)$ and satisfies $\big| \{\vp=0\}\big| = \big| \{ u(\cdot,t)>\del\}\big| \ge \eta$, so that Lemma \ref{lem11}
  applies so as to warrant that with $\kappa=\kappa(\Om)$ as found there,
  \bas
	\io \big(u^{-\frac{p-m+1}{2}} - \del^{-\frac{p-m+1}{2}}\big)_+^2
	&\le& \kappa \cdot \zeta_1(\eta) \cdot \io \Big| \na \big(u^{-\frac{p-m+1}{2}} - \del^{-\frac{p-m+1}{2}}\big)_+ \Big|^2 \\
	&=& \kappa\cdot\zeta_1(\eta) \cdot \int_{\{u<\del\}}
    	\Big| \na \big(u^{-\frac{p-m+1}{2}} - \del^{-\frac{p-m+1}{2}} \big)\Big|^2 \\
	&=& \kappa\cdot\zeta_1(\eta) \cdot \int_{\{u<\del\}} \big|\na u^{-\frac{p-m+1}{2}}\big|^2.
  \eas
  Since $\frac{\xi^{-\gamma}}{\xi^{-\gamma}-\del^{-\gamma}} = \frac{1}{1-(\frac{\xi}{\del})^\gamma} \le \frac{1}{1-2^{-\gamma}}$
  for all $\xi\in (0,\frac{\del}{2})$ and $\gamma>0$, we therefore obtain that
  \bas
	\io u^{-(p-m+1)}
	&=& \int_{\{u<\frac{\del}{2}\}} \Big( \frac{u^{-\frac{p-m+1}{2}}}{u^{-\frac{p-m+1}{2}} - \del^\frac{p-m+1}{2}} \Big)^2
		\cdot \big( u^{-\frac{p-m+1}{2}} - \del^{-\frac{p-m+1}{2}} \big)^2
	+ \int_{\{u\ge\frac{\del}{2}\}} u^{-(p-m+1)} \\
	&\le& \frac{1}{(1-2^{-\frac{p-m+1}{2}})^2} \int_{\{u<\frac{\del}{2}\}}
		\big( u^{-\frac{p-m+1}{2}} - \del^{-\frac{p-m+1}{2}} \big)^2
	+ \Big(\frac{\del}{2}\Big)^{-(p-m+1)} |\Om| \\
	&\le& \frac{1}{(1-2^{-\frac{p-m+1}{2}})^2} \io \big( u^{-\frac{p-m+1}{2}} - \del^{-\frac{p-m+1}{2}} \big)_+^2
	+ \Big(\frac{\del}{2}\Big)^{-(p-m+1)} |\Om| \\
	&\le& c_2 \zeta_1(\eta) \io \big| \na u^{-\frac{p-m+1}{2}}\big|^2
	+ c_2 \del^{-(p-m+1)}
	\qquad \mbox{for all } t>0
  \eas
  with $c_2\equiv c_2(m,p):=\max \big\{ (1-2^{-\frac{p-m+1}{2}})^{-2} \kappa \, , \, 2^{p-m+1} |\Om| \big\}$.
  Relying on the inequality $p<p-m+1$ in estimating
  \bas
	\io u^{-p}
	\le \bigg\{ \io u^{-(p-m+1)} \bigg\}^\frac{p}{p-m+1} |\Om|^\frac{1-m}{p-m+1}
	\qquad \mbox{for all } t>0
  \eas
  by means of the H\"older inequality, from (\ref{12.11}) we hence infer upon writing
  $c_3\equiv c_3(p,m):=\frac{c_1}{c_2} (|\Om|+1)^{m-1}$ that
  \bas
	\frac{d}{dt} \io u^{-p}
	+ \frac{c_3}{\zeta_1(\eta)} \cdot \bigg\{ \io u^{-p}\bigg\}^\frac{p-m+1}{p}
	\le \frac{c_1 \del^{-(p-m+1)}}{\zeta_1(\eta)}
	\qquad \mbox{for all } t>0.
  \eas
  An application of Lemma \ref{lem75} to $a:=\frac{c_3}{\zeta_1(\eta)}, b:=\frac{c_1 \del^{-(p-m+1)}}{\zeta_1(\eta)}$
  and $\al:=\frac{p-m+1}{p}$ thus reveals that
  \bas	
	\io u^{-p} \le \Big(\frac{p\zeta_1(\eta)}{(1-m)c_3}\Big)^\frac{p}{1-m} t^{-\frac{p}{1-m}}
	+ \Big(\frac{c_1 \del^{-(p-m+1)}}{c_3}\Big)^\frac{p}{p-m+1}
  \eas
  and hence
  \bas
	\Big\| \frac{1}{u}\Big\|_{L^p(\Om)}
	\le 2^\frac{1}{p} \cdot \Big(\frac{p\zeta_1(\eta)}{(1-m)c_3}\Big)^\frac{1}{1-m} t^{-\frac{1}{1-m}}
	+ 2^\frac{1}{p} \Big(\frac{c_1 \del^{-(p-m+1)}}{c_3}\Big)^\frac{1}{p-m+1}
  \eas
  for all $t>0$.
  In view of our definitions of $\del$ and $\eta$ in (\ref{12.2}) and (\ref{12.3}), this establishes (\ref{12.1})
  if $\Gamma_3=\Gamma_3(m,p,q,r)$ is sufficiently large, because in line with (\ref{zeta}), letting
  $\eta_0:=\frac{\|u_0\|_{L^q(\Om)}}{\|u_0\|_{L^r(\Om)}}$ we have
  \bas
	\zeta_1(\eta)
	= \big(2^{-\frac{r}{r-q}} \eta_0^\frac{qr}{r-q}\big)^{-\frac{n-2}{n}}
	= 2^\frac{(n-2)r}{n(r-q)} \zeta_{\frac{qr}{r-q}}(\eta_0)
  \eas
  when $n\ge 3$, as well as
  \bas
	\zeta_1(\eta)
	= 1 + \ln_+ \Big( 2^\frac{r}{r-q} \eta_0^{-\frac{qr}{q-r}} \Big)
	\le 1 + \ln 2^\frac{r}{r-q} + \frac{qr}{q-r} \ln_+ \frac{1}{\eta_0}
  \eas
  if $n=2$.
\qed
In turning the above into the lower bound addressed in Theorem \ref{theo68}, we will make use of the following elementary ineqiality.
\begin{lem}\label{lem79}
  Let $X>0$ and $Y>0$. Then
  \bas
	\ln (X+Y) \le \ln_+ X + \ln_+ Y + \ln 2.
  \eas
\end{lem}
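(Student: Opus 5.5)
The plan is to compare the arguments of the two logarithms directly: bound $X+Y$ above by a multiple of $\max\{X,Y\}$, and then estimate $\ln$ of the maximum by the positive parts. Since $X>0$ and $Y>0$, we have
\[
	X+Y \le 2\max\{X,Y\}.
\]
Because $\ln$ is increasing on $(0,\infty)$, this gives
\[
	\ln(X+Y) \le \ln 2 + \max\{\ln X,\ln Y\}.
\]

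Next, $\ln\sigma\le\ln_+\sigma$ holds for every $\sigma>0$ by the definition $\ln_+\sigma=\max\{\ln\sigma,0\}$. Hence
\[
	\max\{\ln X,\ln Y\}\le\max\{\ln_+X,\ln_+Y\}.
\]
Both $\ln_+X$ and $\ln_+Y$ are nonnegative, so their maximum is at most their sum:
\[
	\max\{\ln_+X,\ln_+Y\}\le \ln_+X+\ln_+Y.
\]
Chaining these three inequalities yields the claim.

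No step is a genuine obstacle here. The only point to watch is that the sign convention of $\ln_+$ is what makes the passage from the maximum to the sum legitimate, since it relies on both terms being nonnegative. Positivity of $X$ and $Y$ is needed only to make the logarithms well defined.
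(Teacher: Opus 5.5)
Your proof is correct and is essentially the paper's argument: the paper assumes without loss of generality $X\ge Y$, so that $\ln(X+Y)\le\ln(2X)=\ln X+\ln 2\le\ln_+X+\ln 2\le\ln_+X+\ln_+Y+\ln 2$. Writing it with $\max\{X,Y\}$ instead of that normalization is only a notational difference.
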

\proof
  Without loss of generality assuming that $X\ge Y$, we only need to observe that then
  $\ln (X+Y) \le \ln (2X) = \ln X + \ln 2 \le \ln_+ X + \ln 2 \le \ln_+ X + \ln_+ Y + \ln 2$
  by nonnegativity of $\ln_+$.
\qed
Forming a core part of the analysis in this paper, a second Moser iteration uses Lemma \ref{lem12} as a starting point
for a recursive reasoning toward lower estimates for smooth solutions, depending on the initial data
only through the quantities in (\ref{68.2}):
\begin{lem}\label{lem13}
  Let $n\ge 1, m\in (-\infty,1), q\in (0,1]$ and $r\ge 1$ be such that $q<r$.
  Then there exists $\Gamma_4=\Gamma_4(m,q,r)>0$ such that if $0<u_0\in C^0(\bom)$ and
  $0<u\in C^0(\bom\times [0,\infty)) \cap C^{2,1}(\bom\times (0,\infty))$ are such that (\ref{0}) is solved
  in the classical sense, then with $\zeta_{\frac{qr}{r-q}}$ taken from (\ref{zeta}), we have
  \be{13.1}
	\Big\| \frac{1}{u(\cdot,t)} \Big\|_{L^\infty(\Om)}
	\le \Gamma_4 \cdot \zeta_{\frac{qr}{r-q}}^{1+n(1-m)} \Big( \mbox{$\frac{\|u_0\|_{L^q(\Om)}}{\|u_0\|_{L^r(\Om)}}$} \Big)
	\cdot t^{-\frac{1}{1-m}}
	+ \Gamma_4 \cdot \zeta_{\frac{qr}{r-q}}^{n(1-m)} \Big( \mbox{$\frac{\|u_0\|_{L^q(\Om)}}{\|u_0\|_{L^r(\Om)}}$} \Big)
	\cdot \frac{1}{\|u_0\|_{L^q(\Om)}}
	\qquad \mbox{for all } t>0.
  \ee
\end{lem}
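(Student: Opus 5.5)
We will mimic the Moser iteration of Lemma \ref{lem2}, now applied to $w:=\frac1u$ and the integrals $\io u^{-p_k}$, using Lemma \ref{lem12} as the starting point. Write $\zeta:=\zeta_{\frac{qr}{r-q}}(\|u_0\|_{L^q(\Om)}/\|u_0\|_{L^r(\Om)})$ and $K_q:=\|u_0\|_{L^q(\Om)}$.

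\textbf{Normalization.} We first remove $K_q$ by the scaling of Lemma \ref{lem3}: $v(x,\tau):=K_q^{-1}u(x,K_q^{1-m}\tau)$. This preserves the ratio $K_q/K_r$, hence $\zeta$, so we may assume $K_q=1$. We then further normalize time by $\zeta$. Specifically, Lemma \ref{lem12} gives $\|1/u(\cdot,t)\|_{L^{p_0}}\le C\zeta(t^{-\frac1{1-m}}+1)$. For $t_0\le T_\star:=\zeta^{1-m}$ (note $\zeta\ge1$ up to constants), the first term dominates, and we aim for $\|1/u(\cdot,t_0)\|_{L^\infty}\le C\zeta^{1+n(1-m)}t_0^{-\frac1{1-m}}$. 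For $t>T_\star$ we use that $\|1/u\|_{L^\infty}$ is nonincreasing (maximum principle), which yields the second term $\zeta^{n(1-m)}$ after evaluating at $t=T_\star$.

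\textbf{Iteration.} Fix $p_0>1$ large (for instance $p_0>(n+2)(1-m)/4$), set $p_k:=2p_{k-1}+\frac{n(1-m)}2$ and $\gamma_k:=2\gamma_{k-1}+\frac n2$ with $\gamma_0=0$, and define
\[
M_k:=\sup_{t\in(t_0/2,T]}(t-t_0/2)^{\gamma_k}\Big(1+\io u^{-p_k}\Big)
\]
as in (\ref{2.115}). The identity (\ref{12.11}) gives $\frac{d}{dt}\io u^{-p_k}=-c\io|\na u^{-\frac{p_k-m+1}2}|^2$ with $c$ uniform in $k$. Apply Gagliardo--Nirenberg to $\vp=u^{-\frac{p_k-m+1}{2}}$, interpolating $L^{2p_k/(p_k-m+1)}$ between $\|\na\vp\|_{L^2}$ and $L^{2p_{k-1}/(p_k-m+1)}$. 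With the same exponent bookkeeping as in Lemma \ref{lem2}, where $m-1$ is replaced by $1-m$, this yields
\[
\frac{d}{dt}\io u^{-p_k}+c\Big\{\io u^{-p_{k-1}}\Big\}^{-\frac4n}\Big\{\io u^{-p_k}\Big\}^{\frac{n+2}n}\le C\Big(1+\io u^{-p_{k-1}}\Big)^2.
\]
Since $p_k>p_{k-1}$ and $p_k-m+1>p_k$, the Sobolev admissibility of the exponents has to be rechecked; for large $p_0$ the ratio $2p_k/(p_k-m+1)$ stays below $2$. Lemma \ref{lem76} then gives $M_k\le c_5^kM_{k-1}^2$, and Lemma \ref{lem78} gives $M_k\le c_5^{2^{k+1}}M_0^{2^k}$. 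Letting $k\to\infty$, and using $2^k/p_k\to 2/(2p_0+n(1-m))$ and $\gamma_k/p_k\to n/(2p_0+n(1-m))$, we obtain
\[
\|1/u(\cdot,t_0)\|_{L^\infty}\le C M_0^{\frac{2}{2p_0+n(1-m)}}t_0^{-\frac{n}{2p_0+n(1-m)}}.
\]

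\textbf{Exponent tracking (main obstacle).} We have $M_0\le 1+\io u^{-p_0}(\cdot,t_0/2)\le C\zeta^{p_0}t_0^{-\frac{p_0}{1-m}}$ on $(0,T_\star]$, and $\io u^{-p_0}$ is nonincreasing. Inserting this, the power of $t_0$ becomes $-\frac{2p_0/(1-m)+n}{2p_0+n(1-m)}=-\frac1{1-m}$, which is as desired. The power of $\zeta$ is $\frac{2p_0}{2p_0+n(1-m)}\le1$, which is even better than the claimed $\zeta^{1+n(1-m)}$.

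The delicate point is that the time window must be chosen of length comparable to $T_\star$ rather than $1$: the $\zeta$-dependence otherwise leaks through the inhomogeneous term $C(1+\cdots)^2$, and through the "$+1$" in $M_k$, whose relative size depends on the normalization. I would first try working on $(t_0/2,T_\star]$ with all constants scaled. If the inhomogeneity forces it, I would accept cruder bounds, using $\zeta\ge1$ to absorb the lower-order terms, which can produce exactly the extra powers $\zeta^{n(1-m)}$ allowed in (\ref{13.1}). Undoing the scaling, $K_q$ enters as $1/K_q$ in the second term and the time exponent is unchanged.
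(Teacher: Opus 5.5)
\textbf{There is a genuine gap in your iteration step.} You claim that the bookkeeping of Lemma~\ref{lem2}, with $m-1$ replaced by $1-m$, yields the right-hand side $C(1+\int_\Omega u^{-p_{k-1}})^2$. That is false. Take, as in (\ref{2.118}), the lower-order Gagliardo--Nirenberg term in $L^{2p_{k-1}/(p_k-m+1)}(\Omega)$. Using $p_k=2p_{k-1}+\frac{n(1-m)}{2}$, it contributes
\[
\Big\{\int_\Omega u^{-p_{k-1}}\Big\}^{-\frac{4}{n}+\frac{n+2}{n}\cdot\frac{p_k}{p_{k-1}}}
=\Big\{\int_\Omega u^{-p_{k-1}}\Big\}^{2+\frac{(n+2)(1-m)}{2p_{k-1}}}.
\]
In Lemma~\ref{lem2} the corresponding correction was $\frac{(n+2)(m-1)}{2p_{k-1}}<0$, and only that sign allowed the bound by $(1+\int_\Omega u^{p_{k-1}})^2$. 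Here the term is super-quadratic, and it is large exactly where it matters (small times, large $\zeta$). So $M_k\le c_5^kM_{k-1}^2$ does not follow.

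Correcting the exponent does not rescue your framework either. With the shifted weights $(t-\frac{t_0}{2})^{\gamma_k}$ and $\gamma_0=0$, Lemma~\ref{lem76} then yields at best $M_k\le C_k\max\{1,M_{k-1}\}^{p_k/p_{k-1}}$, with $C_k$ depending on the window length. Consequently $M_0\le C\zeta^{p_0}t_0^{-p_0/(1-m)}$ survives in the limit with power $\frac{1}{p_0}$ rather than $\frac{2}{2p_0+n(1-m)}$. Combined with the factor $(t_0/2)^{-\gamma_k/p_k}$, this gives $t_0^{-\frac{1}{1-m}-\frac{n}{2p_0+n(1-m)}}$, so the sharp rate is lost.

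Moreover, on a window of length $T_\star=\zeta^{1-m}$, two further sources of $\zeta$ appear that you leave untracked: the factor $(T-\frac{t_0}{2})^{\frac{\alpha+\lambda}{\alpha-1}-\mu}$ in (\ref{76.3}), and the summand $1$ in $M_k$. Your conclusion that $\zeta$ enters only with the power $\frac{2p_0}{2p_0+n(1-m)}$ is an artifact of these omissions, not a consequence of the argument.

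\textbf{The missing idea is to make the inhomogeneous term exactly critical in time,} so that the constants from Lemma~\ref{lem76} do not depend on the window. The paper does this as follows.
\begin{itemize}
\item It starts at $p_0=1$ and measures weights from $t=0$: $M_k=\sup_{t\in(0,K^{1-m})}t^{\gamma_k}\int_\Omega u^{-p_k}$, with $\gamma_0=\frac{1}{1-m}$ and $K=\zeta^\star\|u_0\|_{L^q(\Omega)}$. Lemma~\ref{lem12} then directly gives $M_0\le 2\Gamma_3\zeta^\star$.
\item It places the lower-order Gagliardo--Nirenberg term in $L^{q_k}$ with $q_k$ from (\ref{13.22}), i.e.\ in $\|1/u\|_{L^{2p_k/(4+n(1-m))}(\Omega)}$.
\item It interpolates this norm by H\"older between $L^{p_{k-1}}$ and $L^1$, so the excess power $\frac{(n+2)(1-m)}{2}$ falls on $\|1/u\|_{L^1(\Omega)}\le t^{-\frac{1}{1-m}}M_0$.
\end{itemize}
The inhomogeneity then becomes $c\,t^{-2\gamma_{k-1}-\frac{n+2}{2}}M_0^{\frac{(n+2)(1-m)}{2}}M_{k-1}^2$, with $\mu=\frac{\alpha+\lambda}{\alpha-1}$ exactly. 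Hence $M_k\le c_4\big(2^{nk/2}+M_0^{n(1-m)/2}\big)M_{k-1}^2$, and the accumulated extra powers of $M_0$ are the source of $\zeta^{1+n(1-m)}$.

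Alternatively, you could keep your lower-order term but use the weights $t^{p_k/(1-m)}$ measured from $t=0$. Then your super-quadratic term is itself exactly critical, and Lemma~\ref{lem76} gives $M_k\le C_k\max\{1,M_{k-1}\}^{p_k/p_{k-1}}$ with $\sum_k\frac{\ln C_k}{p_k}<\infty$. Since $\gamma_k/p_k=\frac{1}{1-m}$ for every $k$, the iteration closes.

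Your normalization and the final monotonicity step for $t>T_\star$ are fine.
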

\proof
  For integers $k\ge 0$, we set
  \be{13.2}
	p_k:=2^k + (2^k-1) \cdot \frac{n(1-m)}{2},
	\qquad \mbox{and} \qquad
	\gamma_k:=2^k \cdot \frac{1}{1-m} + (2^k-1) \cdot \frac{n}{2},
  \ee
  and for $k\ge 1$ we define
  \be{13.22}
	q_k:=\frac{p_k}{p_k-m+1} \cdot \frac{4}{4-n(m-1)},
   \ee
  and observe that the sequences $(p_k)_{k\ge 0}$ and $(\gamma_k)_{k\ge 0}$ are increasing and divergent with
  \be{13.3}
	p_k = 2p_{k-1} - \frac{n(m-1)}{2}
	\quad \mbox{and} \quad
	\gamma_k = 2\gamma_{k-1} + \frac{n}{2}
	\qquad \mbox{for all } k\ge 1,
  \ee
  while
  \be{13.4}
	0 < q_1 \le q_k \le \lim_{k'\to\infty} q_{k'} = \frac{4}{4-n(m-1)}<\infty
	\qquad \mbox{for all } k\ge 1.
  \ee
  As
  \bas
	\frac{2}{2-m} = \frac{2p_0}{p_0-m+1} \le \frac{2p_k}{p_k-m+1} \le 2
	\qquad \mbox{for all } k\ge 1
  \eas
  and,
  by (\ref{13.22}), (\ref{13.3}) and the fact that $p_{k-1}\ge 1$ for all $k\ge 1$ due to (\ref{13.2}),
   \bas
  q_k=\frac{p_k}{p_k-m+1} \cdot \frac{4}{4-n(m-1)}
  \le \frac{2p_{k-1}}{p_k-m+1}
  \qquad \mbox{for all } k\ge 1
  \eas
  as well as, by (\ref{13.3}) again,
  \bas
	\frac{2p_{k-1}}{p_k-m+1} = \frac{2p_{k-1}}{2p_{k-1} - \frac{(n+2)(m-1)}{2}}
	\qquad \mbox{for all } k\ge 1
  \eas
  and hence	
  \bas
	0 < \frac{2}{2-\frac{(n+2)(m-1)}{2}} \le \frac{2p_{k-1}}{p_k-m+1} \le 1
	\qquad \mbox{for all } k\ge 1,
  \eas
  we may draw on a Gagliardo-Nirenberg inequality to fix $c_1=c_1(m)>0$ such that for all $k\ge 1$
  and any $\vp\in W^{1,2}(\Om)$,
  \bas
	\|\vp\|_{L^\frac{2p_k}{p_k-m+1}(\Om)}^{\frac{n+2}{n} \cdot \frac{2p_k}{p_k-m+1}}
	\le c_1 \|\na\vp\|_{L^2(\Om)}^{\frac{n+2}{n} \cdot \frac{2p_k}{p_k-m+1} \cdot \iota_k}
		\|\vp\|_{L^\frac{2p_{k-1}}{p_k-m+1}(\Om)}^{\frac{n+2}{n} \cdot \frac{2p_k}{p_k-m+1} \cdot (1-\iota_k)}
	+ c_1\|\vp\|_{L^{q_k}(\Om)}^{\frac{n+2}{n} \cdot \frac{2p_k}{p_k-m+1}},
  \eas
  where
  \bas
	\iota_k:=\frac{n(p_k-m+1)(p_k-p_{k-1})}{p_k\cdot [(2-n)p_{k-1} + n(p_k-m+1)]}
	\qquad \mbox{and} \qquad
	1-\iota_k=\frac{p_{k-1} \cdot [2p_k-n(m-1)]}{p_k\cdot [(2-n)p_{k-1} + n(p_k-m+1)]}.
  \eas
  Straightforward simplification using (\ref{13.3}) hence shows that for any choice of $u_0$ and $u$ with the presupposed properties,
  for all $t>0$ and $k\ge 1$
  we have
  \bea{13.5}
	\bigg\{ \io u^{-p_k}\bigg\}^\frac{n+2}{n}
	&=& \big\| u^{-\frac{p_k-m+1}{2}}\big\|_{L^\frac{2p_k}{p_k-m+1}(\Om)}^{\frac{n+2}{n} \cdot \frac{2p_k}{p_k-m+1)}} \nn\\
	&\le& c_1\big\|\na u^{-\frac{p_k-m+1}{2}} \big\|_{L^2(\Om)}^2
		\big\| u^{-\frac{p_k-m+1}{2}} \big\|_{L^\frac{2p_{k-1}}{p_k-m+1}(\Om)}^{\frac{4}{n} \cdot \frac{2p_{k-1}}{p_k-m+1)}}
	+ c_1 \big\| u^{-\frac{p_k-m+1}{2}} \big\|_{L^{q_k}(\Om)}^{\frac{n+2}{n} \cdot \frac{2p_k}{p_k-m+1}} \nn\\
	&=& c_1 \big\| \na u^{-\frac{p_k-m+1}{2}} \big\|_{L^2(\Om)}^2 \cdot
		\bigg\{ \io u^{-p_{k-1}} \bigg\}^\frac{4}{n}
	+ c_1 \cdot \Big\| \frac{1}{u}\Big\|_{L^{\frac{2p_k}{4-n(m-1)}}(\Om)}^\frac{(n+2)p_k}{n},
  \eea
  because
  \bas
	\frac{(p_k-m+1)q_k}{2} =  \frac{p_k-m+1}{2} \cdot \frac{p_k}{p_k-m+1} \cdot \frac{4}{4-n(m-1)}
    =\frac{2p_k}{4-n(m-1)}
     \qquad \mbox{for all } k\ge 1
  \eas
  by (\ref{13.22}).\abs
  To make appropriate use of this, for $k\ge 0$ we define
  \be{13.6}
	M_k:=\sup_{t\in (0,K^{1-m})} \bigg\{ t^{\gamma_k} \io u^{-p_k}(\cdot,t) \bigg\},
  \ee
  where $K:=\zeta^\star \|u_0\|_{L^q(\Om)}$ with
  $\zeta^\star$ as in (\ref{zs}),
  and note that according to (\ref{13.2}) and Lemma \ref{lem12}, with $\Gamma_3=\Gamma_3(m,1,q,r)$
  as introduced there we have
  \be{13.7}
	M_0
	= \sup_{t\in (0,K^{1-m})} \bigg\{ t^\frac{1}{1-m} \io \frac{1}{u(\cdot,t)} \bigg\}
	\le \Gamma_3 \zeta^\star
	+ \frac{\Gamma_3}{\|u_0\|_{L^q(\Om)}} \cdot \big(\zeta^\star \|u_0\|_{L^q(\Om)}\big)
	= 2\Gamma_3 \zeta^\star
  \ee
  For $k\ge 1$, observing that $\frac{p(p+1)}{(p-m+1)^2} >\frac{p^2}{(p-m+1)^2} \ge c_2\equiv c_2(m):=\frac{1}{(2-m)^2}$ for all $p\ge 1$
  we may combine (\ref{0}) with (\ref{13.5}) to see that
  \bea{13.8}
	\frac{d}{dt} \io u^{-p_k}
	&=& - \frac{p_k(p_k+1)}{(p_k-m+1)^2} \io \big| \na u^{-\frac{p_k-m+1}{2}} \big|^2 \nn\\
	&\le& -c_2 \io \big| \na u^{-\frac{p_k-m+1}{2}} \big|^2 \nn\\
	&\le& - \frac{c_2}{c_1} \cdot \bigg\{ \io u^{-p_{k-1}} \bigg\}^{-\frac{4}{n}} \cdot
		\bigg\{ \io u^{-p_k} \bigg\}^\frac{n+2}{n} \nn\\
	& & + c_2\cdot\bigg\{ \io u^{-p_{k-1}} \bigg\}^{-\frac{4}{n}} \cdot
		\Big\| \frac{1}{u}\Big\|_{L^{\frac{2p_k}{4-n(m-1)}}(\Om)}^\frac{(n+2)p_k}{n}
	\qquad \mbox{for all } t>0,
  \eea
  where using that
  \bas
	\frac{2p_k}{4-n(m-1)} \ge \frac{2p_1}{4-n(m-1)} = 1
  \eas
  and
  \bas
	\frac{2p_k}{4-n(m-1)} - p_{k-1}
	= \frac{4p_{k-1} -n(m-1)}{4-n(m-1)} - \frac{4p_{k-1}-n(m-1) p_{k-1}}{4-n(m-1)}
	= \frac{n(m-1) (p_{k-1}-1)}{4-n(m-1)}
	\le 0
  \eas
  by (\ref{13.2}) and (\ref{13.3}), we may invoke the H\"older inequality to see that for all $t>0$,
  \be{13.9}
	c_2\cdot\bigg\{ \io u^{-p_{k-1}} \bigg\}^{-\frac{4}{n}} \cdot
		\Big\| \frac{1}{u}\Big\|_{L^\frac{2p_k}{4-n(m-1)}(\Om)}^\frac{(n+2)p_k}{n}
	\le c_2 \cdot \bigg\{ \io u^{-p_{k-1}} \bigg\}^{-\frac{4}{n}} \cdot
		\Big\| \frac{1}{u} \Big\|_{L^{p_{k-1}}(\Om)}^{\frac{(n+2)p_k}{n} \cdot \theta_k} \cdot
		\Big\| \frac{1}{u} \Big\|_{L^1(\Om)}^{\frac{(n+2)p_k}{n} \cdot (1-\theta_k)}
  \ee
  with
  \bas
	\theta_k:=\frac{p_{k-1}}{p_{k-1}-1} \cdot \Big( 1 - \frac{4-n(m-1)}{2p_k}\Big)
	\qquad \mbox{if } k\ge 2,
  \eas
  and with the actually arbitrary number $\theta_1$ for convenience be fixed according to $\theta_1:=\frac{4}{4-n(m-1)}$.
  Since thus from (\ref{13.3}) it follows that
  \bas
	\theta_k
	= \frac{p_{k-1}}{p_{k-1}-1} \cdot \frac{2p_k-4+n(m-1)}{2p_k}
	= \frac{p_{k-1}}{p_{k-1}-1} \cdot \frac{[4p_{k-1}-n(m-1)] - 4+n(m-1)}{2p_k}
	= \frac{2p_{k-1}}{p_k}
  \eas
  for all $k\ge 2$,
  and since in view of (\ref{13.2}) our choice of $\theta_1$ ensures that also $\theta_1=\frac{2p_0}{p_1}$,
  from (\ref{13.9}) we therefore obtain that in both cases $k=1$ and $k\ge 2$,
  \bea{13.99}
	& & \hs{-20mm}
	c_2\cdot\bigg\{ \io u^{-p_{k-1}} \bigg\}^{-\frac{4}{n}} \cdot
		\Big\| \frac{1}{u}\Big\|_{L^\frac{2p_k}{4-n(m-1)}(\Om)}^\frac{(n+2)p_k}{n} \nn\\
	&\le& c_2\cdot\bigg\{ \io u^{-p_{k-1}} \bigg\}^{-\frac{4}{n}} \cdot
		\Big\| \frac{1}{u} \Big\|_{L^{p_{k-1}}(\Om)}^\frac{2(n+2)p_{k-1}}{n} \cdot
		\Big\| \frac{1}{u} \Big\|_{L^1(\Om)}^{-\frac{(n+2)(m-1)}{2}}
	\qquad \mbox{for all } t>0,
  \eea
  because again by (\ref{13.3}),
  \bas
	\frac{(n+2)p_k}{n} \cdot (1-\theta_k)
	= \frac{(n+2)p_k}{n} \cdot \Big(1-\frac{2p_{k-1}}{p_k}\Big)
	= \frac{n+2}{n} (p_k-p_{k-1})
	= - \frac{(n-2)(m-1)}{2}.
  \eas
  We now recall our definition from (\ref{13.6}) in estimating
  \bas
	\io u^{-p_{k-1}} \le t^{-\gamma_{k-1}} M_{k-1}
	\qquad \mbox{for all } t\in (0,K^{1-m})
  \eas
  and
  \bas
	\io \frac{1}{u} = \io u^{-p_0}
	\le t^{-\gamma_0} M_0
	= t^{-\frac{1}{1-m}} M_0
	\qquad \mbox{for all } t\in (0,K^{1-m}),
  \eas
  which when applied to (\ref{13.99}) and (\ref{13.8}) shows that the latter entails the inequality
  \bas
	\frac{d}{dt} \io u^{-p_k}
	+ \frac{c_2}{c_1} t^\frac{4\gamma_{k-1}}{n} M_{k-1}^{-\frac{4}{n}} \cdot \bigg\{ \io u^{-p_k} \bigg\}^\frac{n+2}{n}
	&\le& c_2 \cdot (t^{-\gamma_{k-1}} M_{k-1})^2 \cdot (t^{-\frac{1}{1-m}} M_0)^{-\frac{(n+2)(m-1)}{2}} \\
	&=& c_2 t^{-2\gamma_{k-1}-\frac{n+2}{2}} M_0^{-\frac{(n+2)(m-1)}{2}} M_{k-1}^2
  \eas
  for all $t\in (0,K^{1-m})$.
  Employing Lemma \ref{lem76} with $a:=\frac{c_2}{c_1} M_{k-1}^{-\frac{4}{n}}, b:=c_2 M_0^{-\frac{(n+2)(m-1)}{2}}  M_{k-1}^2$,
  $\al:=\frac{n+2}{n}$, $\lam:=\frac{4\gamma_{k-1}}{n}$ and $\mu:=2\gamma_{k-1}+ \frac{n+2}{2}$, and noting that then,
  by (\ref{13.3}),
  \bas
	\frac{\lam+1}{\al-1}
	= \frac{\frac{4\gamma_{k-1}}{n}+1}{\frac{2}{n}}
	= 2\gamma_{k-1} + \frac{n}{2}
	= \gamma_k
  \eas
  as well as
  \bas
	\frac{\al+\lam}{\al-1} - \mu
	= \frac{n+2}{2} + \frac{\frac{4\gamma_{k-1}}{n}}{\frac{2}{n}}
	= \frac{n+2}{2} + 2\gamma_{k-1} - \mu
	= 0
  \eas
  turns this into the estimate
  \bas
	\io u^{-p_k} \le A_k t^{-\gamma_k}
	\qquad \mbox{for all } t\in (0,K^{1-m}),
  \eas
  where
  \bas
	A_k
	&:=&
	\max \Bigg\{ \bigg( \frac{2\gamma_k}{\frac{c_2}{c_1} M_{k-1}^{-\frac{4}{n}}} \bigg)^\frac{n}{2} \, , \,
		\bigg( \frac{2c_2 M_0^{-\frac{(n+2)(m-1)}{2}} M_{k-1}^2}{\frac{c_2}{c_1} M_{k-1}^{-\frac{4}{n}}} \bigg)^\frac{n}{n+2}
	\Bigg\} \\
	&=& \max \bigg\{ \Big(\frac{2c_1}{c_2}\Big)^\frac{n}{2} \gamma_k^\frac{n}{2} \, , \,
		(2c_1)^\frac{n}{n+2} M_0^{-\frac{n(m-1)}{2}} \bigg\} \cdot M_{k-1}^2 \\
	&\le& c_4 \cdot \big( 2^\frac{nk}{2} + M_0^{-\frac{n(m-1)}{2}} \big)\cdot M_{k-1}^2
  \eas
  with $c_4\equiv c_4(m):=(\frac{2c_1}{c_2})^\frac{n}{2} \cdot (\frac{1}{1-m} + \frac{n}{2})^\frac{n}{2} + (2c_1)^\frac{n}{n+2}$,
  because from (\ref{13.2}) we know that $\gamma_k\le (\frac{1}{1-m}+\frac{n}{2}) \cdot 2^k$.
  In line with (\ref{13.6}), we therefore obtain that
  \bas
	M_k \le A_k \le c_4\cdot\big(2^\frac{nk}{2} + M_0^{-\frac{n(m-1)}{2}}\big) \cdot M_{k-1}^2
	\qquad \mbox{for all } k\ge 1,
  \eas
  and that hence, by Lemma \ref{lem79},
  \bas
	\ln M_k
	&\le& \ln c_4 + \ln \big(2^\frac{nk}{2} - M_0^{-\frac{n(m-1)}{2}}\big) + 2\ln M_{k-1} \\
	&\le& \ln c_4 + \frac{n\ln 2}{2} \cdot k
	- \frac{n(m-1)}{2} \cdot \ln_+ M_0 + \ln 2 + 2\ln M_{k-1}
	\qquad \mbox{for all } k\ge 1.
  \eas
  Lemma \ref{lem78} thus ensures that
  \bas
	\ln M_k
	&\le& 2^k \ln M_0
	+ \sum_{j=1}^k 2^{k-j} \cdot \Big\{ \ln (2c_4) + \frac{n\ln 2}{2} \cdot j - \frac{n(m-1)}{2} \cdot \ln_+ M_0\Big\} \\
	&=& 2^k \ln M_0 + \Big\{ \ln (2c_4) - \frac{n(m-1)}{2} \cdot \ln_+ M_0 \Big\} \cdot \sum_{j=1}^k 2^{k-j}
	+ \frac{n\ln 2}{2} \cdot \sum_{j=1}^k j\cdot 2^{k-j}
	\qquad \mbox{for all } k\ge 1.
  \eas
  Since
  $\sum_{j=1}^k 2^{k-j} = 2^{k}-1 <2^{k+1}$
  and
  $\sum_{j=1}^k j\cdot 2^{k-j} = 2^{k+1} - k - 2 \le 2^{k+1}$
  for all $k\ge 1$,
  on letting $c_5\equiv c_5(m):=\max \{\ln (2c_4) \, , \, \frac{n\ln 2}{2} \, , \, 1\} \ge 1$ we thereby obtain that
  \bas
	\ln M_k
	&\le& 2^k \ln M_0	
	+ 2^{k+1} c_5
	- 2^k \cdot n(m-1) \ln_+ M_0
	+ 2^{k+1} c_5 \\
	&\le& 2^k \cdot \big\{ 1+n(1-m) \big\} \cdot \ln_+ M_0 + 2^{k+2} c_5
	\qquad \mbox{for all } k\ge 1.
  \eas
  As $p_k\ge 2^k$ for all $k\ge 1$ by (\ref{13.2}), this entails that
  \bas
	\ln M_k^\frac{1}{p_k}
	&\le& \frac{2^k}{p_k} \cdot \big\{ 1+n(1-m) \big\} \cdot \ln_+ M_0
	+ \frac{2^{k+2} c_5}{p_k} \\
	&\le& \big\{1+n(1-m)\big\} \cdot \ln_+ M_0
	+ 4c_5
	\qquad \mbox{for all } k\ge 1,
  \eas
  in view of (\ref{13.6}) and (\ref{13.7}) meaning that with $c_6\equiv c_6(m,q,r):=\max\{1,2\Gamma_3\}$ and
  $c_7\equiv c_7(m,q,r):=\exp\big\{ 4c_5 + \{1+n(1-m)\} \cdot \ln c_6\big\}$,
  \bas
	\Big\| \frac{1}{u}\Big\|_{L^{p_k}(\Om)}
	&\le& e^{4c_5} \cdot e^{\{1+n(1-m)\} \cdot \ln_+ M_0} \cdot t^{-\frac{\gamma_k}{p_k}} \\
	&\le& e^{4c_5} \cdot \exp \bigg\{ \big\{ (1+n(1-m)\big\} \cdot
		\ln ( c_6 \zeta^\star  + c_6 ) \bigg\}
		\cdot t^{-\frac{\gamma_k}{p_k}} \\
	&=& c_7 \cdot ( \zeta^\star + 1 )^{1+n(1-m)}
		\cdot t^{-\frac{\gamma_k}{p_k}}
	\qquad \mbox{for all $t\in (0,K^{1-m}]$ and $k\ge 1$.}
  \eas
  Since $\frac{\gamma_k}{p_k} \to \frac{1}{1-m}$ as $k\to\infty$, we thus conclude that
  \bas
	\Big\| \frac{1}{u}\Big\|_{L^\infty(\Om)}
	\le c_7 \cdot ( \zeta^\star + 1 )^{1+n(1-m)}
		\cdot t^{-\frac{1}{1-m}}
	\qquad \mbox{for all } t\in (0,K^{1-m}],
  \eas
  from which it follows that if we let $c_8\equiv c_8(q,r):=|\Om|^\frac{r-q}{qr}$ and
  $c_9\equiv c_9(m,q,r):= c_7 \cdot \Big(\frac{1}{\zeta_{\frac{qr}{r-q}}(c_8)} +1\Big)^{1+n(1-m)}$, then in line with (\ref{zs}),
  \bas
	\Big\| \frac{1}{u}\Big\|_{L^\infty(\Om)}
	\le c_9 (\zeta^\star)^{1+n(1-m)}
		\cdot t^{-\frac{1}{1-m}}
	\qquad \mbox{for all } t\in (0,K^{1-m}],
  \eas
  because $\frac{\|u_0\|_{L^q(\Om)}}{\|u_0\|_{L^r(\Om)}} \le c_8$ by the H\"older inequality, and because by downward monotonicity
  of $\zeta_{\frac{qr}{r-q}}$ we have $(1+\zeta_{\frac{qr}{r-q}}(\xi))^{1+n(1-m)} \le c_9 \zeta_{\frac{qr}{r-q}}^{1-n(1-m)}(\xi)$
  for all $\xi\in (0,c_8]$.
  Since $0<1\mapsto \|\frac{1}{u(\cdot,t)}\|_{L^\infty(\Om)}$ is nonincreasing by the maximum principle, this particularly implies that
  \bas
	\Big\| \frac{1}{u}\Big\|_{L^\infty(\Om)}
	\le c_9 (\zeta^\star)^{1+n(1-m)}
		\cdot \frac{1}{K}
	= c_9 (\zeta^\star)^{n(1-m)}
		\cdot \frac{1}{\|u_0\|_{L^q(\Om)}}
	\qquad \mbox{for all } t> K^{1-m},
  \eas
  whence again recalling (\ref{zs}) we see that the inequality in (\ref{13.1}) holds for all $t>0$ if we let
  $\Gamma_4\equiv \Gamma_4(m,q,r):=c_9$.
\qed
\mysection{Approximation by smooth solutions. Proof of the main results}\label{sect4}
The setting of general and possibly non-smooth initial data as in Proposition \ref{prop66} and Theorems \ref{theo67} and \ref{theo68}
can now be reduced in a fairly straightforward manner by relying on a standard $L^1$-contraction property associated with (\ref{0}).
The following actually more general statement in this regard translates meanwhile traditional knowledge to the present scenario
of (\ref{0}) (cf., e.g., \cite{ACP} and \cite{V1}). The comparison principle additionally contained will be referred
to in the proofs of Propositions \ref{prop56} and \ref{prop55} below.
\begin{lem}\label{lem61}
  Let $n\ge 1$ and $m\in (-\infty,1)$, and suppose that for some $t_0\in\R$ and $T\in (t_0,\infty]$, the functions
  $\uu$ and $\ou$ lie in $C^0([t_0,T);L^1(\Om)) \cap C^{2,1}(\bom\times (t_0,T))$ and are such that
  $\uu>0$ and $\ou>0$ in $\bom\times (t_0,T)$, that
  \be{61.1}
	\uu_t \le \na\cdot (\uu^{m-1}\na\uu)
	\quad \mbox{and} \quad
	\ou_t \ge \na\cdot (\ou^{m-1}\na\ou)
	\qquad \mbox{in } \Om\times (t_0,T),
  \ee
  and that
  \be{61.2}
	\frac{\pa \ou}{\pa\nu} \le 0 \le \frac{\pa \ou}{\pa\nu}
	\qquad \mbox{on } \pO\times (t_0,T).
  \ee
  Then
  \be{61.22}
	\io \big(\uu(\cdot,t)-\ou(\cdot,t)\big)_+
	\le
	\io \big(\uu(\cdot,t_0)-\ou(\cdot,t_0)\big)_+
	\qquad \mbox{for all } t\in (t_0,T).
  \ee
\end{lem}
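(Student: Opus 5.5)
The plan is a Kato-type argument in which we test the difference of the two inequalities in (\ref{61.1}) against a smoothed positive-part sign function of the difference of the \emph{diffusion potentials}, not of the functions themselves. Two remarks first. In (\ref{61.2}) the intended hypothesis is clearly $\frac{\pa\uu}{\pa\nu}\le 0\le\frac{\pa\ou}{\pa\nu}$, and we use it in that form. Both functions are positive on $\bom\times(t_0,T)$, so with $\Psi$ as in (\ref{Psi}) we have $\Psi'(\xi)=\xi^{m-1}$ for $\xi>0$. Hence $\na\cdot(u^{m-1}\na u)=\Del\Psi(u)$ for $u\in\{\uu,\ou\}$, and $\Psi$ is strictly increasing on $(0,\infty)$.

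We fix $t_0<t_1<t_2<T$. On $\bom\times[t_1,t_2]$ both functions are $C^{2,1}$ and bounded from above and below by positive constants, so all integrations below are classical. We set $w:=\Psi(\uu)-\Psi(\ou)$. For $\del\in(0,1)$ we pick $\chi_\del\in C^\infty(\R)$ nondecreasing with $\chi_\del=0$ on $(-\infty,0]$, $0\le\chi_\del\le 1$, and $\chi_\del(s)\to 1$ for each $s>0$ as $\del\searrow 0$; for example $\chi_\del(s)=\chi(s/\del)$ with a fixed smooth cutoff $\chi$. Subtracting the inequalities in (\ref{61.1}), multiplying by $\chi_\del(w)\ge0$ and integrating by parts gives
\bas
	\int_{t_1}^{t_2}\io (\uu-\ou)_t\,\chi_\del(w)
	\le \int_{t_1}^{t_2}\int_{\pO} \frac{\pa w}{\pa\nu}\,\chi_\del(w)
	- \int_{t_1}^{t_2}\io \chi_\del'(w)|\na w|^2 .
\eas
The second term on the right is nonpositive because $\chi_\del'\ge0$. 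The boundary term is nonpositive because $\frac{\pa w}{\pa\nu}=\uu^{m-1}\frac{\pa\uu}{\pa\nu}-\ou^{m-1}\frac{\pa\ou}{\pa\nu}\le 0$ by the corrected (\ref{61.2}), and $\chi_\del\ge0$. Hence the left-hand side is $\le 0$ for every $\del$.

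Next we let $\del\searrow0$. Since $\Psi$ is strictly increasing, $\chi_\del(w)\to \mathbf 1_{\{w>0\}}=\mathbf 1_{\{\uu>\ou\}}$ pointwise. Because $(\uu-\ou)_t$ is bounded on $\bom\times[t_1,t_2]$, dominated convergence yields
\bas
	\int_{t_1}^{t_2}\io (\uu-\ou)_t\,\mathbf 1_{\{\uu>\ou\}}\le 0 .
\eas
For $f:=\uu-\ou$, which is $C^1$ in $t$, the map $t\mapsto f_+(x,t)$ is Lipschitz in $t$ with derivative $f_t\mathbf 1_{\{f>0\}}$ for a.e.~$t$. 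Fubini then turns the last inequality into $\io(\uu-\ou)_+(\cdot,t_2)\le\io(\uu-\ou)_+(\cdot,t_1)$.

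Finally we let $t_1\searrow t_0$. Since $\uu,\ou\in C^0([t_0,T);L^1(\Om))$ and $f\mapsto f_+$ is $1$-Lipschitz on $L^1(\Om)$, the right-hand side converges to $\io(\uu(\cdot,t_0)-\ou(\cdot,t_0))_+$, which proves (\ref{61.22}). The only delicate point is the passage $\del\to0$ together with the a.e.~differentiability of $f_+$. Both are harmless here because we work on compact subintervals of $(t_0,T)$, where everything is smooth and bounded, and only afterwards use $L^1$-continuity to reach $t_0$. The Neumann sign condition enters exclusively through the boundary integral, which is why the potential $w=\Psi(\uu)-\Psi(\ou)$, rather than $\uu-\ou$, must be the argument of $\chi_\del$.
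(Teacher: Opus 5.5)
Your proposal is correct and follows essentially the paper's route: you test $(\uu-\ou)_t\le\Del(\Psi(\uu)-\Psi(\ou))$ against a smoothed positive-part sign of the potential difference, and you use the boundary condition in the form $\frac{\pa\uu}{\pa\nu}\le 0\le\frac{\pa\ou}{\pa\nu}$. This is exactly how the paper's proof uses (\ref{61.2}), so you read the typo correctly. You then let $\del\searrow 0$ by dominated convergence and finally $t_1\searrow t_0$ by $L^1$-continuity. The differences are cosmetic: you obtain $\frac{d}{dt}(\uu-\ou)_+$ directly from Lipschitz continuity in $t$ and Fubini, where the paper runs a second approximation with $\Phi_\del(\uu-\ou)$, $\Phi_\del'=\vp_\del$; and your $1$-Lipschitz argument for $f\mapsto f_+$ on $L^1(\Om)$ justifies the last limit a bit more cleanly than the paper's appeal to dominated convergence.
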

\proof
  We let $(\vp_\del)_{\del\in (0,1)} \subset C^1(\R)$ be such that for each $\del\in (0,1)$, the function $\vp_\del$
  is nondecreasing and such that $\vp_\del\equiv 0$ on $(-\infty,0]$ and $\vp_\del\equiv 1$ on $[\del,\infty)$, and we define
  $\Phi_\del(\xi):=\int_0^\xi \vp_\del(\sig)d\sig$ for $\xi\in\R$ and $\del\in (0,1)$.
  Then for any $\del\in (0,1)$,
  \be{61.3}
	0 \le \vp_\del(\xi)
	\le \sgn_+ \xi:=\lball
		0 \quad & \mbox{if } \xi\le 0, \\[1mm]
		1 \quad & \mbox{if } \xi>0
	\ear
	\qquad \mbox{and} \qquad
	0 \le\Phi_\del(\xi) \le \xi_+
	\quad \mbox{for all } \xi\in\R,
  \ee
  and for each $\xi\in\R$ we have
  \be{61.4}
	\vp_\del(\xi) \to \sgn_+ \xi
	\quad \mbox{and} \quad
	\Phi_\del(\xi) \to \xi_+
	\qquad \mbox{as } \del\searrow 0.
  \ee
  Now letting $\psi(\xi):=\int_1^\xi \sig^{m-1}$, $\xi>0$, from (\ref{61.1}) we obtain that in the classical sense, $d:=\uu-\ou$
  satisfies
  $d_t \le \Del (\psi(\uu)-\psi(\ou))$ in $\Om\times (t_0,T)$, which upon multiplying by $\vp_\del\big( \psi(\uu)-\psi(\ou)\big)$
  and integrating by parts shows that for any fixed $t_1\in (t_0,T)$ and $t\in (t_1,T)$,
  \bas
	\int_{t_1}^t \io d_t \cdot \vp_\del\big(\psi(\uu)-\psi(\ou)\big)
	&\le& - \int_{t_1}^t \io \vp_\del'\big( \psi(\uu)-\psi(\ou)\big) \big| \na \big(\psi(\uu)-\psi(\ou)\big)\big|^2 \\
	& & + \int_{t_1}^t \int_{\pO} \vp_\del \big( \psi(\uu)-\psi(\ou)\big) \frac{\pa \big(\psi(\uu)-\psi(\ou)\big)}{\pa\nu} \\
	&\le& 0,
  \eas
  because
  \bas
	\frac{\pa \big(\psi(\uu)-\psi(\ou)\big)}{\pa\nu}
	= \uu^{m-1} \frac{\pa \uu}{\pa\nu} - \ou^{m-1} \frac{\pa \ou}{\pa\nu}
	\le 0
	\qquad \mbox{on } \pO\times (t_0,T)
  \eas
  by (\ref{61.2}).
  Using that our hypotheses warrant that $d_t$ is continuous on $\bom\times [t_1,t]$, in view of (\ref{61.3}) and (\ref{61.4})
  we may employ the dominated convergence theorem to thus find that since $\psi(\uu)-\psi(\ou)$ is positive precisely at those
  points at which $\uu-\ou>0$,
  \bas
	0
	\ge
	\int_{t_1}^t \io d_t \cdot \vp_\del \big(\psi(\uu)-\psi(\ou)\big)
	\to \int_{t_1}^t \io d_t \sgn_+ \big(\psi(\uu)-\psi(\ou)\big)
	= \int_{t_1}^t \io d_t \sgn_+ (\uu-\ou)
	\qquad \mbox{as } \del\searrow 0.
  \eas
  Since, by the same token, also
  \bas
	\int_{t_1}^t \io d_t \vp_\del(d)
	= \int_{t_1}^t \io d_t \vp_\del(\uu-\ou)
	\to \int_{t_1}^t \io d_t \sgn_+ (\uu-\ou)
	\qquad \mbox{as } \del\searrow 0,
  \eas
  we thereby obtain that
  \bas
	0 &\ge&
	\limsup_{\del\searrow 0} \int_{t_1}^t \io d_t \vp_\del(d)
	= \limsup_{\del\searrow 0}\int_{t_1}^t \frac{d}{dt'} \io \Phi_\del(d(x,t')) dx dt' \\
	&=& \limsup_{\del\searrow 0} \bigg\{ \io \Phi_\del(d(x,t)) dx
	- \io \Phi_\del(d(x,t_1)) dx \bigg\}.
  \eas
  Since, again by the dominated convergence theorem, the respective second parts in (\ref{61.3}) and (\ref{61.4}) along with the
  continuity of $d_+$ in $\bom\times (t_0,T)$ ensure that for each $t'\in (t_0,T)$ we have
  \bas
	\io \Phi_\del(d(x,t')) dx \to \io d_+(x,t') dx
	\qquad \mbox{as } \del\searrow 0,
  \eas
  we thus conclude that
  \be{61.5}
	\io d_+(\cdot,t) \le \io d_+(\cdot,t_1)
	\qquad \mbox{for all $t_1\in (t_0,T)$ and } t\in (t_1,T).
  \ee
  It remains to note that $d(\cdot,t_1)\to d(\cdot,t_0)$ in $L^1(\Om)$ as $t_1\searrow t_0$ according to our hypotheses,
  and that hence
  \bas
	\io d_+(\cdot,t_1)\to \io d_+(\cdot,t_0)
	\qquad \mbox{as } t_1\searrow t_0
  \eas
  thanks to the dominated convergence theorem, to infer (\ref{61.22}) from (\ref{61.5}).
\qed
Having this at hand, we can first derive the qualitative statement on existence, uniqueness and approximation of locally smooth
$L^1$-continuous solutions by yet quite coarse applications of Lemma \ref{lem3} and Lemma \ref{lem13}:\abs
\proofc of Proposition \ref{prop66}. \quad
  Given any $(u_{0j})_{j\in\N} \subset C^\infty(\bom)$ such that $u_{0j}>0$ in $\bom$ for all $j\in\N$ and
  $u_{0j} \to u_0$ in $L^s(\Om)$ as $j\to\infty$, two applications of Lemma \ref{lem61} show that the positive classical solutions
  $u_j\in C^0(\bom\times [0,\infty)) \cap C^{2,1}(\bom\times (0,\infty))$ emanating from $u_j$, for each $j\in\N$ existing
  thanks to standard parabolic solution theory and the maximum principle (\cite{amann}),
  satisfy $\|u_j(\cdot,t)-u_{j'}(\cdot,t)\|_{L^1(\Om)} \le \|u_{0j}-u_{0j'}\|_{L^1(\Om)}$ for all $t\ge 0$, $j\in\N$ and $j'\in\N$.
  Since $s\ge 1$ and thus $u_{0j} \to u_0$ in $L^1(\Om)$ as $j\to\infty$, this ensures that for every $T>0$,
  $(u_j)_{j\in\N}$ forms a Cauchy sequence in $C^0([0,T];L^1(\Om))$, and that thus
  \be{66.2}
	u_j\to u
	\quad \mbox{in } C^0_{loc}([0,\infty);L^1(\Om))
	\qquad \mbox{as } j\to\infty
  \ee
  with some nonnegative $u\in C^0([0,\infty);L^1(\Om))$ fulfilling $u(\cdot,0)=u_0$.\abs
  Next, relying on (\ref{66.1}) and the mere fact that $\sup_{j\in\N} \|u_{0j}\|_{L^s(\Om)} <\infty$,
  we may employ Lemma \ref{lem3} to see that for each $\tau>0$ there exists $c_1=c_1(\tau)>0$ such that
  \be{66.3}
	u_j \le c_1
	\quad \mbox{in $\Om\times (\frac{\tau}{2},\infty)$ \qquad for all } j\in\N,
  \ee
  whereas, similarly, letting $r:=s\ge 1$ and choosing any $q\in (0,1)$, and using that then
  $\sup_{j\in\N} \|u_{0j}\|_{L^r(\Om)} <\infty$ and $\inf_{j\ge j_0} \|u_{0j}\|_{L^q(\Om)} >0$ for some $j_0\in\N$
  due to the fact that $u_0\not\equiv 0$, for any $\tau>0$ we can employ Lemma \ref{lem13} find $c_2(\tau)>0$ satisfying
  \be{66.4}
	u_j\ge c_2
	\quad \mbox{in $\Om\times (\frac{\tau}{2},\infty)$ \qquad for all } j\ge j_0.
  \ee
  Thanks to the non-degeneracy property implied by (\ref{66.3}) and (\ref{66.4}), standard parabolic Schauder theory (\cite{LSU})
  applies so as to assert that for each $\tau>0$ and $T>\tau$ there exists $\theta=\theta(\tau,T)\in (0,1)$ such that
  $(u_j)_{j\ge j_0}$ is bounded in $C^{2+\theta,1+\frac{\theta}{2}}(\bom\times [\tau,T])$, in view of the Arzel\`a-Ascoli theorem
  meaning that $(u_j)_{j\ge j_0}$ is relatively compact in $C^{2,1}_{loc}(\bom\times (0,\infty))$.
  The convergence in (\ref{66.2}) therefore additionally takes place in $C^{2,1}_{loc}(\bom\times (0,\infty))$,
  whence taking limits in (\ref{0}) we obtain that indeed $u$ satisfies $u_t=\na\cdot (u^{m-1}\na u)$ in $\Om\times (0,\infty)$
  and $\frac{\pa u}{\pa\nu}=0$ on $\pO\times (0,\infty)$.
\qed
Relying on the part of Proposition \ref{prop66} that is concerned with approximation, the $L^p$-estimates
in Theorem \ref{theo67} can readily be obtained by now making full use of Lemma \ref{lem3}, and of Lemma \ref{lem2}.\abs
\proofc of Theorem \ref{theo67}. \quad
  We take any $(u_{0j})_{j\in\N} \subset C^\infty(\bom)$ such that $u_{0j}>0$ in $\bom$ and
  $\|u_{0j}\|_{L^s(\Om)} \le 2\|u_0\|_{L^s(\Om)}$ for all $j\in\N$, and that $u_{0j} \to u_0$ in $L^s(\Om)$ as $j\to\infty$. Then
  Lemma \ref{lem1} and Lemma \ref{lem3} assert that with $\Gamma_1$ and $\Gamma_2$ as introduced in Lemma \ref{lem1}
  and Lemma \ref{lem2}, and with
  \bas
	\Gamma^\star(m,s,p):=\lball
	\Gamma_1(m,s,p)
	\qquad & \mbox{if } p<\infty, \\[1mm]
	\Gamma_2(m,s)
	\qquad & \mbox{if } p=\infty,
	\ear
  \eas
  as found there, the corresponding solutions $u_j$ of (\ref{0}) emanating from $u_{0j}$ satisfy
  \bas
	\|u_j(\cdot,t)\|_{L^p(\Om)}
	\le \Gamma^\star \|u_0\|_{L^s(\Om)}^\frac{2s+\frac{n(m-1)s}{p}}{2s+n(m-1)} \cdot t^{-\frac{n(1-\frac{s}{p})}{2s+n(m-1)}}
		+ \Gamma^\star \|u_0\|_{L^s(\Om)}
	\qquad \mbox{for all $t>0$ and } j\in\N.
  \eas
  In view of the approximation statement in Proposition \ref{prop66}, this entails (\ref{67.2}).
\qed
The claimed lower bounds can be verified quite similarly:\abs
\proofc of Theorem \ref{theo68}. \quad
  In much the same manner as Theorem \ref{theo67} has been derived from Proposition \ref{prop66} as well as
  Lemma \ref{lem1} and Lemma \ref{lem3}, this can be seen by combining Proposition \ref{prop66} with Lemma \ref{lem13}.
\qed
Without substantial further efforts, the above theory can be seen to actually transfer one-to-one to the solution
framework specified in Definition \ref{dw}:\abs
\proofc of Corollary \ref{cor57}. \quad
  We let $u$ be as in Proposition \ref{prop66}, and then directly obtain from the inclusion $u\in C^{2,1}(\bom\times (0,\infty))$
  that for $\Psi$ as in (\ref{Psi}) we have $\Psi(u)\in L^\infty_{loc}(\bom\times (0,\infty))$.
  As the property in (\ref{0w}) is immediately implied by the fact that in the classical sense we have
  $u_t=\Del \Psi(u)$ in $\bom\times (0,\infty)$ and $\frac{\pa u}{\pa\nu}=0$ on $\pO\times (0,\infty)$ according to
  Definition \ref{ds}, it follows that $u$ indeed also is a weak solution of (\ref{0}).\abs
  To see its uniqueness, we suppose that $\wh{u}$ is another weak solution of (\ref{0}), and then obtain by an almost verbatim
  copy of a well-established argument, as detailed in \cite[Proposition 9]{ACP},
  for instance, that for any $T>0$ and $t_0\in (0,T)$
  we have
  \bas
	\io (u-\wh{u})_+ (\cdot,T)
	\le
	\io (u-\wh{u})_+ (\cdot,t_0)
	\qquad \mbox{and} \qquad
	\io (\wh{u}-u)_+ (\cdot,T)
	\le
	\io (\wh{u}-u)_+ (\cdot,t_0)
  \eas
  and hence
  \bas
	\|u(\cdot,T)-\wh{u}(\cdot,T)\|_{L^1(\Om)} \le
	\|u(\cdot,t_0)-\wh{u}(\cdot,t_0)\|_{L^1(\Om)}.
  \eas
  Since both $u(\cdot,t_0)\to u_0$ and $\wh{u}(\cdot,t_0) \to u_0$ in $L^1(\Om)$ as $t_0\searrow 0$,
  and since $T>0$ was arbitrary here, this confirms that, in fact, $u=\wh{u}$ a.e.~in $\Om\times (0,\infty)$.
\qed
\mysection{Optimality. Proofs of Propositions \ref{prop56} and \ref{prop55}}\label{sect5}
The statements both of Proposition \ref{prop56} and of Proposition \ref{prop55} will be derived by applications of the comparison
principle implied by Lemma \ref{lem61} to sub- and supersolutions which fit into the template generated as follows.
\begin{lem}\label{lem555}
  Let $n\ge 1$ and $m\in (-\infty,1)$, and suppose that there exists $r_1>0$ such that $\ov{B}_{r_1}(0)\subset \Om$.
  Then whenever $\rho\in C^2([0,\infty))$ is such that $\rho\ge 0$ and $\rho'(r)=0$ for all $r\ge r_1$,
  for any choice of $\beta\in\R$ and $T>0$, and for arbitrary functions $y\in C^0([0,T)) \cap C^1((0,T))$ and
  $\del\in C^0([0,T)) \cap C^1((0,T))$ which are both positive on $(0,T)$, the function defined by
  \be{555.01}
	u(x,t):=y(t)\cdot \big\{ \rho^2(|x|) + \del(t)\big\}^\frac{\beta}{2},
	\qquad x\in\bom, \ t\in [0,T),
  \ee
  belongs to $C^0(\bom\times [0,T)) \cap C^{2,1}(\bom\times (0,T))$ and satisfies
  $\frac{\pa u}{\pa\nu}=0$ on $\pO\times (0,T)$ as well as
  \bea{555.3}
	u_t - \na\cdot (u^{m-1}\na u)
	&=& y'(t) \cdot \big\{ \rho^2(|x|) + \del(t) \big\}^\frac{\beta}{2}
	+ \frac{\beta}{2} y(t)\del'(t) \cdot \big\{ \rho^2(|x|) + \del(t) \big\}^\frac{\beta-2}{2} \nn\\
	& & - \beta(m\beta-2) y^m(t) \rho^2(|x|) \rho'^2(|x|) \cdot \big\{ \rho^2(|x|) + \del(t) \big\}^\frac{m\beta-4}{2} \nn\\
	& & - \beta y^m(t) \rho(|x|) \rho''(|x|) \cdot \big\{ \rho^2(|x|) + \del(t) \big\}^\frac{m\beta-2}{2} \nn\\
	& & - \beta y^m(t) \cdot \Big\{ \rho'^2(|x|) + \frac{n-1}{|x|} \rho(|x|)\rho'(|x|)\Big\} \cdot
		\big\{ \rho^2(|x|) + \del(t) \big\}^\frac{m\beta-2}{2}
  \eea
  for all $x\in\Om$ and $t\in (0,T)$.
\end{lem}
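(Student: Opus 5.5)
The plan is to verify the regularity and boundary claims directly from the structure of (\ref{555.01}), and then compute $u_t$ and $\na\cdot(u^{m-1}\na u)$ explicitly, using the radial form of the Laplacian.

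\textbf{Regularity and the boundary condition.} Write $w(x,t):=\rho^2(|x|)+\del(t)$.
\begin{itemize}
\item For $t>0$ we have $w\ge \del(t)>0$, so $w^{\beta/2}$ is smooth in $w$. Since $\rho\in C^2$ and $\del\in C^1((0,T))$, the function $u$ lies in $C^{2,1}(\bom\times(0,T))$.
\item Near $x=0$, the $C^2$ regularity of $x\mapsto\rho^2(|x|)$ needs a comment, because $|x|$ is not smooth at the origin. I would either use that $\rho$ only enters through $\rho^2(|x|)$ and note $\rho'(0)$ considerations, or simply accept that the statement implicitly requires $\rho'(0)=0$; with $\rho\in C^2$ this makes $\rho(|x|)$ a $C^2$ function of $x$.
\item Continuity on $[0,T)$ follows from the continuity of $y$, $\del$ and $\rho$. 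The only subtlety is $t=0$, where $\del(0)$ may vanish; if $\beta<0$ and $\rho$ has zeros, $u$ could then be infinite. I would remark that the claim is read under that continuity, in the paper's intended applications.
\item Since $\ov B_{r_1}(0)\subset\Om$ and $\rho'\equiv0$ for $r\ge r_1$, $u$ is spatially constant near $\pO$, so $\frac{\pa u}{\pa\nu}=0$ there.
\end{itemize}

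\textbf{Time derivative.} Directly,
\[
u_t=y'w^{\beta/2}+\tfrac{\beta}{2}\,y\,\del'\,w^{\frac{\beta-2}{2}} .
\]
This gives the first two terms of (\ref{555.3}).

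\textbf{Diffusion term.} Write $u^{m-1}\na u=\frac1m\na u^m$ for $m\ne0$, and handle $m=0$ through $\na\ln u$; the formulas agree by continuity in $m$, or one checks directly that $u^{m-1}\na u=y^m w^{\frac{m\beta-2}{2}}\cdot\frac{\beta}{2}\na w$. Then, in the radial variable $r=|x|$,
\[
\na w=2\rho\rho'\frac{x}{r},\qquad u^{m-1}\na u=\beta\,y^m\,\rho\rho'\,w^{\frac{m\beta-2}{2}}\,\frac{x}{r}.
\]
Applying the radial divergence $\frac{1}{r^{n-1}}(r^{n-1}f)'$ to $f=\beta y^m\rho\rho' w^{\frac{m\beta-2}{2}}$ produces four pieces:
\begin{itemize}
\item $\beta y^m(\rho'^2+\rho\rho'')\,w^{\frac{m\beta-2}{2}}$;
\item $\beta y^m\rho\rho'\cdot\frac{m\beta-2}{2}\,w^{\frac{m\beta-4}{2}}\cdot2\rho\rho'=\beta(m\beta-2)\,y^m\rho^2\rho'^2\,w^{\frac{m\beta-4}{2}}$;
\item $\beta y^m\frac{n-1}{r}\rho\rho'\,w^{\frac{m\beta-2}{2}}$.
\end{itemize}
Subtracting these from $u_t$ yields exactly (\ref{555.3}). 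At $x=0$ the singular-looking term $\frac{n-1}{|x|}\rho\rho'$ is understood by continuity, again via $\rho'(0)=0$.

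\textbf{Main obstacle.} The computation itself is routine. The only real care needed is the behaviour at the origin and at $t=0$, as discussed above.
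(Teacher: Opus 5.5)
Your computation is correct and is essentially the paper's. The paper writes out $u_t$, $u_r$ and $u_{rr}$ in radial notation and rearranges. You instead first compute the flux $u^{m-1}\na u=\beta y^m\rho\rho'\, w^{\frac{m\beta-2}{2}}\frac{x}{|x|}$, with $w=\rho^2(|x|)+\del(t)$, and then apply the radial divergence. That is the same calculation organized a bit more economically. In both versions the Neumann condition comes from $\na u\equiv 0$ outside $\ov{B}_{r_1}(0)$.

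Your caveats about the origin and about $t=0$ are justified, since the paper's proof silently passes over both. However, at the origin the right extra condition is $(\rho^2)'(0)=2\rho(0)\rho'(0)=0$, not $\rho'(0)=0$. The latter would exclude the choice $\rho(r)=r$ near $0$ used in the proof of Proposition \ref{prop56}. Under $\rho(0)=0$ one simply has $\rho^2(|x|)=|x|^2$ near $0$, and $\frac{\rho(|x|)\rho'(|x|)}{|x|}\to\rho'^2(0)$.
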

\proof
  Switching to radial notation when writing $r=|x|$ for $x\in\bom$, we compute
  \bas
	u_t = y'(t) \cdot \big\{ \rho^2(r) + \del(t) \big\}^\frac{\beta}{2}
	+ \frac{\beta}{2} y(t)\del'(t) \cdot \big\{ \rho^2(r) + \del(t) \big\}^\frac{\beta-2}{2}
  \eas
  and
  \be{555.4}
	u_r=\beta y(t) \rho(r) \rho'(r) \cdot \big\{ \rho^2(r) + \del(t) \big\}^\frac{\beta-2}{2}
  \ee
  as well as
  \bas
	u_{rr}
	&=&
	\beta(\beta-2) y(t) \rho^2(r) \rho'^2(r) \cdot \big\{ \rho^2(r) + \del(t) \big\}^\frac{\beta-4}{2}
	+ \beta y(t) \rho(r) \rho''(r) \cdot \big\{ \rho^2(r) + \del(t) \big\}^\frac{\beta-2}{2} \\
	& & + \beta y(t) \rho'^2(r) \cdot \big\{ \rho^2(r) + \del(t) \big\}^\frac{\beta-2}{2},
  \eas
  and hence obtain (\ref{555.3}) by straightforward rearrangement.
  As (\ref{555.4}) the fact that $\rho'\equiv 0$ on $[r_1,\infty)$ entail that
  $\supp \na u \subset \ov{B}_{r_1}(0) \times [0,T]$, the inclusion $\ov{B}_{r_1}(0)\subset\Om$ ensures that
  $\na u\equiv 0$ in a neighborhood of $\pO\times (0,T)$, and that thus, in particular,
  $\frac{\pa u}{\pa\nu}=0$ on $\pO\times (0,T)$.
\qed
By choosing $\delta=const.$ and $\rho$ to be a cut-off variant of the identity here, within suitably short time intervals
we can find a positive decreasing $y$ such that the separated function then defined by (\ref{555.01}) acts as a subsolution for
(\ref{0}). In the limit $\del\searrow 0$, this shows that corresponding initial data cannot lead to solutions which
are smooth for $t>0$:\abs
\proofc of Proposition \ref{prop56}.\quad
  Without loss of generality assuming that $0\in\Om$, we fix $r_1>0$ and $R>0$ such that $\ov{B}_{r_1}(0)\subset\Om \subset B_R(0)$,
  and taking an $r_0\in (0,r_1)$ we let $\rho\in C^\infty([0,\infty))$ be such that
  \be{555.1}
	\rho(r)=r
	\quad \mbox{for all } r\in [0,r_0],
	\qquad
	\rho''(r)\le 0
	\quad \mbox{for all } r\ge 0
	\qquad \mbox{and} \qquad
	\rho'(r)=0
	\quad \mbox{for all } r\ge r_1,
  \ee
  noting that then, since $\rho(r)\ge \rho(r_0)=r_0>0$ for all $r>r_0$, it follows that there exists $c_1>0$ such that
  \be{56.2}
	\rho(r)\ge c_1 r
	\qquad \mbox{for all } r\in [0,R].
  \ee
  Assuming that $u_0\in L^1(\Om)$ satisfies (\ref{56.1}), taking $c_2>0$ such that
  \be{56.3}
	u_0(x) \ge c_2 |x|^{-\frac{2}{1-m}}
	\qquad \mbox{for a.e.~} x\in\Om
  \ee
  we then let $y_0:=c_1^\frac{2}{1-m} c_2$ and observe that for each $\del>0$,
  \bas
	\uu(x,t)\equiv \uu^{(\del)}(x,t)
	:=y(t) \cdot \big\{ \rho^2(|x|)+\del\big\}^{-\frac{1}{1-m}},
	\qquad x\in\bom, \ t\in [0,T),
  \eas
  with $T:=\frac{y_0^{1-m}}{2n}$ and
  \be{56.33}
	y(t):=(y_0^{1-m} -2nt)^\frac{1}{1-m},
	\qquad t\in [0,T),
  \ee
  satisfies
  \be{56.4}
	\uu(x,0)
	= y_0 \cdot \big\{ \rho^2(|x|)+\del\big\}^{-\frac{1}{1-m}}
	\le y_0 \cdot \rho^{-\frac{2}{1-m}}(|x|)
	\le c_1^{-\frac{2}{1-m}} y_0 |x|^{-\frac{2}{1-m}}
	= c_2 |x|^{-\frac{2}{1-m}}
	\le u_0(x)
  \ee
  for a.e.~$x\in\Om$ according to (\ref{56.2}) and (\ref{56.3}).
  An application of Lemma \ref{lem555} shows that furthermore
  \be{56.5}
	\frac{\pa \uu}{\pa\nu}=0
	\qquad \mbox{on } \pO\times (0,T),
  \ee
  and that
  \bas
	\uu_t - \na\cdot (\uu^{m-1} \na\uu)
	&=& y'(t) \cdot \big\{ \rho^2(|x|)+\del\big\}^{-\frac{1}{1-m}} \\
	& & - \frac{4}{(1-m)^2} y^m(t) \rho^2(|x|) \rho'^2(|x|) \cdot \big\{ \rho^2(|x|)+\del\big\}^{-\frac{1}{m}-1} \\
	& & + \frac{2}{1-m} y^m(t) \rho(|x|) \rho''(|x|) \cdot \big\{ \rho^2(|x|)+\del\big\}^{-\frac{1}{1-m}} \\
	& & + \frac{2}{1-m} y^m(t) \cdot \Big\{ \rho'^2(|x|) + \frac{n-1}{|x|} \rho(|x|) \rho'(|x|)\Big\} \cdot
		\big\{ \rho^2(|x|)+\del\big\}^{-\frac{1}{1-m}} \\
	&\le& y'(t) \cdot \big\{ \rho^2(|x|)+\del\big\}^{-\frac{1}{1-m}} \\
	& & + \frac{2}{1-m} y^m(t) \cdot \Big\{ \rho'^2(|x|) + \frac{n-1}{|x|} \rho(|x|) \rho'(|x|) \Big\} \cdot
		\big\{ \rho^2(|x|)+\del\big\}^{-\frac{1}{1-m}}
  \eas
  for all $x\in\Om$ and $t\in (0,T)$,
  because $\rho\ge 0$ and $\rho''\le 0$ by (\ref{555.1}). Since necessarily $0\le \rho'\le 1$ and hence
  \bas
	\rho'^2(|x|) + \frac{n-1}{|x|} \rho(|x|)\rho(|x|)
	\le 1 + \frac{n-1}{|x|} \cdot |x| \cdot 1=n
	\qquad \mbox{for all } x\in\Om,
  \eas
  and since (\ref{56.33}) ensures that
  \bas
	y'(t)=-\frac{2n}{1-m} y^m(t)
	\qquad \mbox{for all } t\in (0,T),
  \eas
  this implies that
  \be{56.6}
	\uu_t - \na\cdot (\uu^{m-1} \na\uu) \le 0
	\qquad \mbox{in } \Om\times (0,T).
  \ee
  Now if (\ref{0}) had a locally smooth $L^1$-continuous solution $u$, then on the basis of (\ref{56.4}), (\ref{56.6}) and (\ref{56.5})
  the comparison principle from Lemma \ref{lem61} would assert that whenever $\del>0$, we have
  \bas
	u(x,t) \ge \uu^{(\del)}(x,t)
	\qquad \mbox{for all $x\in\Om$ and } t\in (0,T).
  \eas
  On taking $\del\searrow 0$ and using that $\rho(0)=0$ by (\ref{555.1}), from this we would infer that $u$ could not be smooth
  in $\Om\times (0,T)$.
\qed
For $\del\sim id$ and $y\equiv 1$, suitably small power-type behavior of $\rho$ lets the function in (\ref{555.01})
become a supersolution which can be used to show that the lower estimate in Theorem \ref{theo68} cannot significantly
be improved:\abs
\proofc of Proposition \ref{prop55}.\quad
  Again assuming that $0\in\Om$, we pick $r_0>0$ and $r_1>r_0$ such that $\ov{B}_{r_1}(0)\subset\Om$, and given $\eps\in (0,1)$
  we fix $\rho\in C^\infty([0,\infty))$ such that
  \be{55.2}
	\rho(r)=r^\frac{1}{\eps}
	\quad \mbox{for all } r\in [0,r_0],
	\qquad
	\rho'(r)\ge 0
	\quad \mbox{for all } r\ge 0
	\qquad \mbox{and} \qquad
	\rho'(r)=0
	\quad \mbox{for all } r\ge r_1.
  \ee
  Writing $c_1:=\rho(r_0), c_2:=\rho(r_1), c_3:=\sup_{r\ge r_0} \rho'(r)$ and $c_4:=\sup_{r\ge r_0} |\rho''(r)|$, we can then choose
  $a>0$ suitably large such that
  \be{55.3}
	a \ge \Big( \frac{4|m|}{1-m}+1\Big)\cdot\frac{1}{\eps^2} + \frac{1-\eps}{\eps^2} + \frac{n-1}{\eps}
  \ee
  and
  \be{55.4}
	a \ge \Big( \frac{4|m|}{1-m}+1\Big)\cdot c_1^{-2+2\eps} c_3^2
	+ \Big( c_4 + \frac{(n-1)c_3}{r_0}\Big) \cdot \max \big\{ c_1^{-1+2\eps} \, , \, c_2^{-1+2\eps} \big\},
  \ee
  and we thereupon define
  \bas
	\ou(x,t):=\big\{ \rho^2(|x|) + at \big\}^\frac{1-\eps}{1-m},
	\qquad x\in\bom, \ t\ge 0.
  \eas
  Then thanks to (\ref{55.2}), we may employ Lemma \ref{lem555} to see that $\frac{\pa\ou}{\pa\nu}=0$ on $\pO\times (0,\infty)$, and
  that
  \bea{55.6}
	\hs{-10mm}
	\ou_t- \na \cdot (\ou^{m-1}\na\ou)
	&=& \frac{1-\eps}{1-m} \cdot a \cdot \big\{ \rho^2(|x|) + at \big\}^\frac{m-\eps}{1-m} \nn\\
	& & - \frac{1-\eps}{1-m} \cdot \frac{4m-2-2\eps}{1-m} \cdot \rho^2(|x|) \rho'^2(|x|) \cdot
		\big\{ \rho^2(|x|) + at \big\}^\frac{(3-\eps)m-2}{1-m} \nn\\
	& & - \frac{1-\eps}{1-m} \cdot \rho(|x|) \rho''(|x|) \cdot \big\{ \rho^2(|x|) + at \big\}^\frac{(2-\eps)m-1}{1-m} \nn\\
	& & - \frac{1-\eps}{1-m} \cdot \Big\{ \rho'^2(|x|) + \frac{n-1}{|x|} \rho(|x|) \rho'(|x|) \Big\} \cdot
		\big\{ \rho^2(|x|) + at \big\}^\frac{(2-\eps)m-1}{1-m} \nn\\
	&=& \frac{1-\eps}{1-m} \cdot \big\{ \rho^2(|x|) + at \big\}^\frac{m-\eps}{1-m} \cdot \big\{ a - I(x,t) \big\}
	\qquad \mbox{for all $x\in\Om$ and } t>0,
  \eea
  where
  \bas
	I(x,t)
	&:=& \frac{4m-2-2\eps}{1-m} \cdot \rho^2(|x|) \rho'^2(|x|) \cdot \big\{ \rho^2(|x|) + at \big\}^{-2+\eps}
	+ \rho(|x|) \rho''(|x|) \cdot \big\{ \rho^2(|x|) + at \big\}^{-1+\eps} \\
	& & + \Big\{ \rho'^2(|x|) + \frac{n-1}{|x|} \rho(|x|) \rho'(|x|) \Big\} \cdot
		\big\{ \rho^2(|x|) + at \big\}^{-1+\eps},
	\qquad x\in\Om, \ t>0.
  \eas
  As $\eps<1$ and $m<1$, we can here estimate
  \bea{55.7}
	\hs{-10mm}
	I(x,t)
	&\le& \Big( \frac{4|m|}{1-m}+1\Big) \rho^2(|x|) \rho'^2(|x|) \cdot \rho^{2(-2+\eps)}(|x|)
	+ \rho(|x|) \cdot |\rho''(|x|)| \cdot \rho^{2(-1+\eps)}(|x|) \nn\\
	& & + \Big\{ \rho'^2(|x|) + \frac{n-1}{|x|} \rho(|x|) \rho'(|x|)\Big\} \cdot \rho^{2(-1+\eps)}(|x|) \nn\\
	&=& \Big(\frac{4|m|}{1-m} +1\Big) \rho^{-2+2\eps}(|x|) \rho'^2(|x|)
	+ \rho^{-1+2\eps}(|x|) |\rho''(|x|)|
	+ \frac{n-1}{|x|} \rho^{-1+2\eps}(|x|) \rho'(|x|)
  \eea
  for all $x\in\Om$ and $t>0$,
  whence using that
  \bas
	\rho'(r)=\frac{1}{\eps} r^\frac{1-\eps}{\eps}
	\quad \mbox{and} \quad
	\rho''(r)=\frac{1-\eps}{\eps^2} r^\frac{1-2\eps}{\eps}
	\qquad \mbox{for all } r\in (0,r_0)
  \eas
  by (\ref{55.2}), we find that near the spatial origin,
  \bea{55.8}
	I(x,t)
	&\le& \Big(\frac{4|m|}{1-m}+1\Big) \cdot |x|^\frac{-2+2\eps}{\eps} \cdot \frac{1}{\eps^2} |x|^\frac{2(1-\eps)}{\eps}
	+ |x|^\frac{-1+2\eps}{\eps} \cdot \frac{1-\eps}{\eps^2} |x|^\frac{1-2\eps}{\eps}
	+ \frac{n-1}{|x|} \cdot |x|^\frac{-1+2\eps}{\eps} \cdot\frac{1}{\eps} |x|^\frac{1-\eps}{\eps} \nn\\
	&=& \Big(\frac{4|m|}{1-m}+1\Big) \cdot\frac{1}{\eps^2} + \frac{1-\eps}{\eps^2} + \frac{n-1}{\eps} \nn\\
	&\le& a
	\qquad \mbox{for all $x\in B_{r_0}(0)$ and } t>0
  \eea
  according to (\ref{55.3}).
  In the corresponding complementary region, recalling our definitions of $c_1, c_2, c_3$ and $c_4$ we obtain from (\ref{55.7}) that
  since $\rho$ is nondecreasing and hence $c_1\le\rho\le c_2$ on $[r_0,\infty)$, in line with (\ref{55.4}) we have
  \bas
	I(x,t)
	&\le& \Big(\frac{4|m|}{1-m}+1\Big) c_1^{-2+2\eps} c_3^2
	+ \max\big\{ c_1^{-1+2\eps} \, , \, c_2^{-1+2\eps}\big\} \cdot c_4
	+ \frac{n-1}{r_0} \cdot  \max\big\{ c_1^{-1+2\eps} \, , \, c_2^{-1+2\eps}\big\} \cdot c_3 \\
	&\le& a
	\qquad \mbox{for all $x\in\Om\sm B_{r_0}(0)$ and } t>0.
  \eas
  Together with (\ref{55.7}) and (\ref{55.6}), again since $\eps<1$ and $m<1$ this shows that $\ou_t-\na\cdot (\ou^{m-1}\na\ou)\ge 0$
  in $\Om\times (0,\infty)$, whence using that $\ou$ is continuous in $\bom\times [0,\infty)$, we may rely on the comparison principle
  from Lemma \ref{lem61} to infer that the locally smooth $L^1$-continuous solution $u$ of (\ref{0}) emanating from
  \bas
	u_0(x):=\ou(x,0)=\rho^\frac{2(1-\eps)}{1-m}(|x|),
	\qquad x\in\Om,
  \eas
  satisfies $u\le \ou$ in $\Om\times (0,\infty)$. Since $\rho(0)=0$ by (\ref{55.2}), and since thus
  \bas
	\ou(0,t)=(at)^\frac{1-\eps}{1-m}
	\qquad \mbox{for all } t>0,
  \eas
  this yields the claim.
\qed
\mysection{Appendix}
\subsection{A refined Poincar\'e inequality. Proof of Lemma \ref{lem11}}
For what follows, given a bounded Lipschitz domain $\Om\subset\R^n$ we recall from \cite[p.~74]{CM} the notion of
{\em capacity $\CC_2(E,G)$ of the condenser $(E;G)$ relative to $\Om$}, as specified in (\ref{52.2}).
Here and below, following the notation addressed in that reference we say that a statement holds {\em 2-quasi-everywhere}
($2-$q.~e.) on a masurable set $E\subset\Om$ if it is valid at each point in $E\sm E_0$ with some suitable set $E_0\subset E$
fulfilling $\CC_2(E_0)=0$, where the $2-$capacity of a set $F\subset\Om$ is defined by
\bas
	\CC_2(F):=\inf \bigg\{
	\io |\na\vp|^2 \ \bigg| \
	\vp\in W_0^{1,2}(\Om), \ \vp\ge 1 \mbox{ in some neighborhood of } F \bigg\}.
\eas
The following property of these quantities has been recorded in \cite[pp.~74-75]{CM}.
\begin{lem}\label{lem52}
  Let $n\ge 1$ and $\Om\subset\R^n$ be a bounded domain with Lipschitz boundary.
  Then there exists $\kappa_1=\kappa_1(\Om)>0$ such that for each $\eta\in (0,\frac{|\Om|}{2})$ we have
  \be{52.1}
	\CC_2(E,G) \ge \frac{\kappa_1}{\zeta_1(\eta)}
	\qquad \mbox{whenever $G\subset\Om$ and $E\subset G$ are measurable with $|E|\ge \del$ and $|G|\le\frac{|\Om|}{2}$},
  \ee
  where $\zeta_1$ is as in (\ref{zeta}), and where for measurable $G\subset\Om$ and $E\subset G$,
  \be{52.2}
	\CC_2(E,G):=\inf \bigg\{ \io |\na\vp|^2 \ \bigg| \
		\vp\in W^{1,2}(\Om), \
		\vp\ge 1 \mbox{ $2$-q.e.~on } E
		\mbox{ and }
		\vp\le 0 \mbox{ $2$-q.e.~on } \Om\sm G
	\bigg\}.
  \ee
\end{lem}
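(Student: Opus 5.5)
The plan is to reduce the capacity bound to Sobolev-type inequalities for functions that vanish on a large set. Note first that in (\ref{52.1}) the condition $|E|\ge\del$ should read $|E|\ge\eta$.

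\textbf{Reduction to a truncated competitor.} Fix measurable $E\subset G\subset\Om$ with $|E|\ge\eta$ and $|G|\le\frac{|\Om|}{2}$, and take any admissible $\vp\in W^{1,2}(\Om)$ in (\ref{52.2}). I would pass to the truncation $\psi:=\min\{\max\{\vp,0\},1\}\in W^{1,2}(\Om)$. It satisfies $\io|\na\psi|^2\le\io|\na\vp|^2$. Moreover $\psi=1$ $2$-q.e. on $E$ and $\psi=0$ $2$-q.e. on $\Om\sm G$. Sets of vanishing $2$-capacity have Lebesgue measure zero, so both statements hold a.e. In particular $\psi$ vanishes on a set of measure at least $\frac{|\Om|}{2}$. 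It therefore suffices to show that every $\psi\in W^{1,2}(\Om)$ with $0\le\psi\le1$, $|\{\psi=0\}|\ge\frac{|\Om|}{2}$ and $\psi=1$ a.e. on a set of measure at least $\eta$ satisfies $\io|\na\psi|^2\ge\kappa_1/\zeta_1(\eta)$.

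\textbf{Poincar\'e step.} For such $\psi$, write $\bar\psi:=\frac{1}{|\Om|}\io\psi$. The classical Poincar\'e inequality on the Lipschitz domain $\Om$ gives $\|\psi-\bar\psi\|_{L^2(\Om)}\le C\|\na\psi\|_{L^2(\Om)}$. Because $\psi=0$ on a set of measure at least $\frac{|\Om|}{2}$, we have $\bar\psi^2\cdot\frac{|\Om|}{2}\le\io(\psi-\bar\psi)^2$. Hence $\|\psi\|_{L^2(\Om)}\le C\|\na\psi\|_{L^2(\Om)}$, and so $\|\psi\|_{W^{1,2}(\Om)}\le C'\|\na\psi\|_{L^2(\Om)}$.

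\textbf{Dimension-dependent cases.}
\begin{itemize}
\item If $n\ge3$, the Sobolev embedding $W^{1,2}(\Om)\hookrightarrow L^{\frac{2n}{n-2}}(\Om)$ gives
\[
\eta^{\frac{n-2}{n}}\le\|\psi\|_{L^{\frac{2n}{n-2}}(\Om)}^2\le C''\io|\na\psi|^2,
\]
which is the claim with $\zeta_1(\eta)=\eta^{-\frac{n-2}{n}}$.
\item If $n=1$, $\psi$ is continuous on the interval $\Om$ and takes the values $0$ and $1$. Cauchy--Schwarz along the segment joining two such points gives $1\le|\Om|\io|\psi'|^2$.
\item If $n=2$, I would use the sharp-growth form of the embedding, $\|\psi\|_{L^p(\Om)}\le C\sqrt{p}\,\|\psi\|_{W^{1,2}(\Om)}$ for all $p\ge2$; this follows from Trudinger's inequality or from Gagliardo--Nirenberg with tracked constants. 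It yields $\eta^{2/p}\le C^2p\io|\na\psi|^2$. Choosing $p:=2+\ln_+\frac1\eta$ gives $\eta^{2/p}\ge e^{-2}$, and hence $\io|\na\psi|^2\ge c/(1+\ln_+\frac1\eta)$.
\end{itemize}
Taking the infimum over admissible $\vp$ gives (\ref{52.1}).

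\textbf{Main obstacle.} The delicate points are the quasi-everywhere bookkeeping and the $n=2$ case. For the former, only the a.e. consequences of the q.e. conditions are needed, as noted above. The $n=2$ case requires the $\sqrt p$ growth of the embedding constant; if a clean citation is unavailable, I would derive it from the Moser--Trudinger bound $\io e^{\alpha\psi^2/\|\psi\|_{W^{1,2}}^2}\le C$.
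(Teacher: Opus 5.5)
Your argument is correct, and it takes a genuinely different route from the paper. The paper gives no proof of this lemma; it quotes it from \cite[pp.~74--75]{CM}, as an instance of Maz'ya's isocapacitary inequalities for condensers. To apply it in the proof of Lemma \ref{lem11}, the paper then has to work with $2$-quasicontinuous representatives and q.e.\ conditions.

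You instead prove the lemma from scratch:
\begin{itemize}
\item you truncate to $0\le\psi\le 1$ and use only the a.e.\ content of the q.e.\ constraints;
\item you use the vanishing of $\psi$ on half of $\Om$ to turn Poincar\'e--Wirtinger into $\|\psi\|_{W^{1,2}(\Om)}\le C\|\na\psi\|_{L^2(\Om)}$;
\item you test the Sobolev embedding on $E$, where $\psi=1$.
\end{itemize}
This makes the three cases of $\zeta_1$ transparent. The only nonstandard ingredient is the $\sqrt{p}$-growth of the embedding constant for $n=2$. Your choice $p=2+\ln_+\frac{1}{\eta}$ handles that case correctly, including when $\eta\ge 1$. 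Your correction of $|E|\ge\del$ to $|E|\ge\eta$ is the intended reading.

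The citation is shorter and rests on a general theory that gives finer capacity bounds than are needed here. Your route is elementary, and as a by-product it shows that the capacity detour can be dropped entirely. Run the same embedding on $\vp-\ov{\vp}$ and test on $\{\vp=0\}$, where $|\vp-\ov{\vp}|=|\ov{\vp}|$. This yields $|\Om|\,\ov{\vp}^2\le C\zeta_1(\eta)\io|\na\vp|^2$. Together with $\io\vp^2=\io(\vp-\ov{\vp})^2+|\Om|\,\ov{\vp}^2$ and Poincar\'e--Wirtinger for the first term, this gives Lemma \ref{lem11} directly.
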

We can thereby develop a classical Poincar\'e inequality into the claimed refined version:\abs
\proofc of Lemma \ref{lem11}. \quad
  According to a classical Poincar\'e inequality, there exists $c_1=c_1(\Om)>0$ such that
  \be{11.3}
	\io |\phi-\ov{\phi}|^2 \le c_1 \io |\na\phi|^2
	\qquad \mbox{for all } \phi\in W^{1,2}(\Om),
  \ee
  where $\ov{\phi}:=\frac{1}{|\Om|} \io \phi$ for $\phi\in L^1(\Om)$.\\
  We now let $\eta>0$ and $\vp\in W^{1,2}(\Om)$ be given such that (\ref{11.1}) holds, and according to \cite[p.~74]{CM}
  we may assume without loss of generality that $\vp$ coincides with its so-called $2$-quasicontinuous representative, that is,
  that for every $\eps>0$ there exists a set $F\subset\Om$ with $\CC_2(F)<\eps$ such that $\vp|_{\Om\sm F}$ is continuous.
  We then let $d\ge 0$ be such that
  \be{11.4}
	\big| \{\vp<d\}\big| \le \frac{|\Om|}{2}
	\qquad \mbox{and} \qquad
	\big| \{\vp>d\}\big| \le \frac{|\Om|}{2},
  \ee
  noting that this can achieved by letting $d:=\sup\big\{ d'>0 \ \big| \ |\{\vp<d'\}| \le \frac{|\Om|}{2} \big\}$, for instance.
  In order to show that then, with $\kappa_1$ and $\zeta_1$ taken from Lemma \ref{lem52} and Lemma \ref{lem11}, we have
  \be{11.5}
	\io |\na\vp|^2
	\ge \frac{\kappa_1 d^2}{\zeta_1(\eta)},
  \ee
  we may assume that $d$ is positive, and can then draw on the Lipschitz continuity of $\R\ni\xi\mapsto \xi_+$ to see that
  \be{11.6}
	\psi:=\Big(1-\frac{\vp}{d}\Big)_+
  \ee
  defines a $2$-quasicontinuous function $\psi\in W^{1,2}(\Om)$ which satisfies
  \bas
	\psi=1
	\qquad \mbox{$2$-q.e.~on } E:=\{\vp=0\}
  \eas
  and, with $G:=\{\vp<d\}$,
  \bas
	\psi=0
	\qquad \mbox{$2$-q.e.~on } \Om\sm G=\{\vp\ge d\}.
  \eas
  Since $|E|\ge \eta$ by (\ref{11.1}) and $|G|\le\frac{|\Om|}{2}$ by (\ref{11.4}), in line with (\ref{52.2}) and (\ref{52.1}) we
  obtain that
  \bas
	\io |\na\psi|^2 \ge \CC_2(E,G) \ge \frac{\kappa_1}{\zeta_1(\eta)},
  \eas
  whence (\ref{11.5}) follows upon observing that, on the other hand,
  \bas
	\io |\na\psi|^2 = \frac{1}{d^2} \io |\na\vp|^2
  \eas
  according to (\ref{11.6}).\abs
  Now to derive (\ref{11.2}) from (\ref{11.5}), it is sufficient to note that in the case when $d\ge \ov{\vp}$ we have
  $|\vp-\ov{\vp}| = \vp-\ov{\vp} \ge d-\ov{\vp}$ on $\{\vp\ge d\}$, so that
  \bas
	\io (\vp-\ov{\vp})^2
	\le \int_{\{\vp\ge d\}} (\vp-\ov{\vp})^2
	\ge \frac{|\Om|}{2} \cdot (d-\ov{\vp})^2.
  \eas
  Along with a similar argument addressing the situation when $d<\ov{\vp}$, this shows that regardless of the sign of $d-\ov{\vp}$,
  \bas
	\io (d-\ov{\vp})^2
	\le 2 \io (\vp-\ov{\vp})^2.
  \eas
  Therefore, namely, (\ref{11.5}) together with (\ref{11.3}) implies that
  \bas
	\io \vp^2
	&=& \io \big\{ (\vp-\ov{\vp}) + (\ov{\vp}-d) + d \big\}^2 \\
	&\le& 3 \io (\vp-\ov{\vp})^2
	+ 3 \io (\ov{\vp}-d)^2
	+ 3|\Om| d^2 \\
	&\le& 9 \io (\vp-\ov{\vp})^2
	+ 3|\Om| d^2 \\
	&\le& 9c_1 \io |\na\vp|^2
	+ \frac{3|\Om|}{\kappa_1} \cdot \zeta_1(\eta) \cdot \io |\na\vp|^2,
  \eas
  from which (\ref{11.2}) results with $\kappa(\Om):=\frac{9c_1}{\zeta_1(|Om|)} + \frac{3|\Om|}{\kappa_1}$.
\qed
\subsection{Two auxiliary results based on ODE comparison}
For completeness, let us finally describe how the parameter dependencies of the ODE estimates undertaken in
Lemmata \ref{lem12}, \ref{lem1}, \ref{lem2} and \ref{lem13} can be verified.\abs
We first concentrate on the scenario of a superlinearly damped ODI with constant coefficients, as having appeard in
Lemma \ref{lem12}.
\begin{lem}\label{lem75}
  Let $a>0, b>0, \al>1, t_0\in\R$ and $T>t_0$, and let $y\in C^0([t_0,T)) \cap C^1((t_0,T))$ be nonnegative and such that
  \bas
	y'(t) + ay^\al(t) \le b
	\qquad \mbox{for all } t\in (t_0,T).
  \eas
  Then
  \bas
	y(t) \le \Big(\frac{1}{(\al-1)a}\Big)^\frac{1}{\al-1} \cdot (t-t_0)^{-\frac{1}{\al-1}}
	+ \Big(\frac{b}{a}\Big)^\frac{1}{\al}
	\qquad \mbox{for all } t\in (t_0,T).
  \eas
\end{lem}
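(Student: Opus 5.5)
We compare $y$ with explicit supersolutions of the differential inequality, separately for the two summands in the claimed bound. Write
\[
z(t):=\Big(\frac{1}{(\al-1)a}\Big)^\frac{1}{\al-1}(t-t_0)^{-\frac{1}{\al-1}},
\qquad
y_\star:=\Big(\frac{b}{a}\Big)^\frac1\al ,
\]
so that $z'=-az^\al$ on $(t_0,\infty)$ with $z(t)\to+\infty$ as $t\searrow t_0$, and $ay_\star^\al=b$. The candidate upper bound is $\ov y:=z+y_\star$. Because $\al>1$, the function $\xi\mapsto\xi^\al$ is superadditive on $[0,\infty)$, that is, $(z+y_\star)^\al\ge z^\al+y_\star^\al$. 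Hence
\[
\ov y'+a\ov y^\al\ \ge\ -az^\al+az^\al+ay_\star^\al=b ,
\]
so $\ov y$ is a (classical) supersolution of the equation $w'+aw^\al=b$ on $(t_0,T)$.

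For the comparison, fix $t_1\in(t_0,T)$. Since $y$ is continuous on $[t_0,T)$, it is bounded near $t_0$, while $\ov y\to\infty$ as $t\searrow t_0$; hence there exists $s\in(t_0,t_1)$ with $y<\ov y$ on $(t_0,s]$. Suppose $y(t)>\ov y(t)$ for some $t\in(s,T)$, and let $t_\ast$ be the first time after $s$ at which $y=\ov y$. Then
\[
(y-\ov y)'(t_\ast)\le b-ay^\al(t_\ast)-b+a\ov y^\al(t_\ast)=0 .
\]
This alone is not strict, so I would argue via the function $h:=y-\ov y$ instead. On any interval where $h>0$ we have
\[
h'\le -a\big(y^\al-\ov y^\al\big)\le 0 ,
\]
by monotonicity of $\xi\mapsto\xi^\al$. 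So if $h(t)>0$, let $\tau:=\sup\{t'<t:\ h(t')\le0\}\ge s$. Then $h(\tau)=0$ and $h$ is nonincreasing on $(\tau,t]$, which gives $h(t)\le h(\tau)=0$, a contradiction. This yields $y\le\ov y$ on $(t_0,T)$, which is precisely the asserted inequality.

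The only delicate point is the initial comparison at $t=t_0$, where $\ov y$ is singular. It is handled by the blow-up of $z$ combined with the finiteness of $y(t_0)$; no information on $y(t_0)$ beyond finiteness is needed, which is exactly why the bound is independent of initial data. Everything else is the superadditivity $(z+y_\star)^\al\ge z^\al+y_\star^\al$, which follows from convexity together with $0^\al=0$.
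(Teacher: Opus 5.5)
Your proof is correct and follows the paper's argument: the paper uses the same supersolution $\ov{y}(t)=A(t-t_0)^{-\frac{1}{\al-1}}+B$, verifies the supersolution property through the same superadditivity $\ov{y}^\al\ge A^\al(t-t_0)^{-\frac{\al}{\al-1}}+B^\al$, and concludes by comparison using the blow-up of $\ov{y}$ as $t\searrow t_0$. The only difference is that you carry out the comparison step explicitly, via monotonicity of $h=y-\ov{y}$ wherever $h>0$, which the paper merely invokes as ``an ODE comparison argument''; your abandoned first-touching-time remark is superfluous but harmless.
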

\proof
  Since with $A:=(\frac{1}{(\al-1)a})^\frac{1}{\al-1}$ and $B:=(\frac{b}{a})^\frac{1}{\al}$, for
  $\oy(t):=A(t-t_0)^{-\frac{1}{\al-1}} + B$, $t>t_0$, we have $\oy(t)\nearrow +\infty$ as $t\searrow t_0$ as well as
  $\oy^\al\ge A^\al (t-t_0)^{-\frac{\al}{\al-1}} + B^\al$, and since thus
  \bas
	\oy'(t) + a\oy^\al(t) - b
	\ge - \frac{A}{\al-1} (t-t_0)^{-\frac{1}{\al-1}-1} + a A^\al (t-t_0)^{-\frac{\al}{\al-1}} + aB^\al -b
	= 0
	\qquad \mbox{for all } t>t_0,
  \eas
  this follows by an ODE comparison argument.
\qed
By quite a similar argument, also the counterpart involving time-dependent coefficients of the form arising
in Lemmata \ref{lem1}, \ref{lem2} and \ref{lem13} can be addressed:
\begin{lem}\label{lem76}
  Let $a>0, b>0, \al>1, \lam\ge 0$ and $\mu\ge 0$ be such that $\mu\le\frac{\al+\lam}{\al-1}$, and suppose that $t_0\in\R, T>t_0$
  and $y\in C^0([t_0,T)) \cap C^1((t_0,T))$ be nonnegative and such that
  \be{76.1}
	y'(t) + a (t-t_0)^\lam y^\al(t) \le b (t-t_0)^{-\mu}
	\qquad \mbox{for all } t\in (t_0,T).
  \ee
  Then
  \be{76.2}
	y(t) \le A \cdot (t-t_0)^{-\frac{\lam+1}{\al-1}}
	\qquad \mbox{for all } t\in (t_0,T),
  \ee
  where
  \be{76.3}
	A:=\max \Bigg\{ \bigg( \frac{2(\lam+1)}{(\al-1)a} \bigg)^\frac{1}{\al-1} \, , \,
		\bigg( \frac{2b}{a} \cdot (T-t_0)^{\frac{\al+\lam}{\al-1} - \mu} \bigg)^\frac{1}{\al} \Bigg\}.
  \ee
\end{lem}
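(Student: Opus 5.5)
The plan is to follow the proof of Lemma \ref{lem75}: build an explicit supersolution of the ODI and compare. We set $\tau:=t-t_0$ and
\[
\oy(t):=A\tau^{-\frac{\lam+1}{\al-1}}, \qquad t\in (t_0,T),
\]
with $A$ as in (\ref{76.3}). Since $\frac{\lam+1}{\al-1}>0$, we have $\oy(t)\to+\infty$ as $t\searrow t_0$. On the other hand $y$ is continuous, hence bounded, near $t_0$. So $\oy>y$ on some interval $(t_0,t_0+\eps)$.

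The key computation is to check that $\oy'+a\tau^\lam\oy^\al> b\tau^{-\mu}$ on $(t_0,T)$. We have
\[
\oy'=-\frac{(\lam+1)A}{\al-1}\tau^{-\frac{\lam+1}{\al-1}-1}
\quad\mbox{and}\quad
a\tau^\lam\oy^\al=aA^\al\tau^{\lam-\frac{\al(\lam+1)}{\al-1}}.
\]
The exponents agree, since $\lam-\frac{\al(\lam+1)}{\al-1}=-\frac{\al+\lam}{\al-1}=-\frac{\lam+1}{\al-1}-1$. We split the damping term into two halves.
\begin{itemize}
\item The first half, $\frac{a}{2}A^\al\tau^{-\frac{\al+\lam}{\al-1}}$, dominates $|\oy'|$ provided $A^{\al-1}\ge\frac{2(\lam+1)}{(\al-1)a}$. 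This is the first entry of the maximum in (\ref{76.3}).
\item The second half must exceed $b\tau^{-\mu}$, i.e.\ $\frac a2A^\al\ge b\,\tau^{\frac{\al+\lam}{\al-1}-\mu}$ for $\tau\in(0,T-t_0)$. Because $\mu\le\frac{\al+\lam}{\al-1}$, the right-hand side is nondecreasing in $\tau$. It therefore suffices that $A^\al\ge\frac{2b}{a}(T-t_0)^{\frac{\al+\lam}{\al-1}-\mu}$, which is the second entry.
\end{itemize}
With either inequality taken strictly (or after replacing $A$ by $A+\eps'$), we obtain a strict supersolution.

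The comparison step is standard. Suppose $y(t_1)>\oy(t_1)$ for some $t_1$, and let $t_*:=\inf\{t>t_0: y(t)>\oy(t)\}$; by the preceding paragraph $t_*>t_0$. Then $y(t_*)=\oy(t_*)$ and $y'(t_*)\ge\oy'(t_*)$. But (\ref{76.1}), together with the strict supersolution inequality at a point where $y=\oy$, gives
\[
y'(t_*)\le b\tau^{-\mu}-a\tau^\lam y^\al<\oy'(t_*),
\]
a contradiction. To handle the non-strict case we run the argument with $A+\eps'$ and let $\eps'\searrow0$, which yields (\ref{76.2}).

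The only delicate point is the exponent bookkeeping in the second bullet. The monotonicity in $\tau$ uses exactly the hypothesis $\mu\le\frac{\al+\lam}{\al-1}$. When equality holds, as in Lemma \ref{lem13}, the factor involving $T-t_0$ is $1$ and the bound is independent of $T$.
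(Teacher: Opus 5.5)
Your proposal is correct and follows essentially the same route as the paper: the same barrier $\oy(t)=A(t-t_0)^{-\frac{\lam+1}{\al-1}}$, the same splitting of the damping term into two halves matched to the two entries of the maximum in (\ref{76.3}) (using $\mu\le\frac{\al+\lam}{\al-1}$ for monotonicity in $t-t_0$), and comparison based on $\oy\nearrow+\infty$ as $t\searrow t_0$. Your first-crossing argument with $A$ replaced by $A+\eps'$ fills in the comparison step that the paper only cites as standard.
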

\proof
  Without loss of generality assuming that $t_0=0$, we only need to observe that $\oy(t):=At^{-\frac{\lam+1}{\al-1}}$,
  $t\in (0,T)$, satisfies
  \bas
	\oy'(t) + at^\lam \oy^\al - bt^{-\mu}
	&=& - \frac{\lam+1}{\al-1} A t^{-\frac{\al+\lam}{\al-1}}
	+ a A^\al t^{-\frac{\al+\lam}{\al-1}} - bt^{-\mu} \\
	&=& a A^\al t^{-\frac{\al+\lam}{\al-1}} \cdot \bigg\{
		\Big( \frac{1}{2} - \frac{\lam+1}{(\al-1)a A^{\al-1}}\Big)
	+ \Big(\frac{1}{2}-\frac{b}{a A^\al} t^{\frac{\al+\lam}{\al-1}-\mu} \Big) \bigg\} \\[1mm]
	&\ge& 0
	\qquad \mbox{for all } t\in (0,T)
  \eas
  due to (\ref{76.3}), because $\frac{\al+\lam}{\al-1}-\mu\ge 0$.
  A comparison argument relying on (\ref{76.1}) and the fact that $\oy(t)\nearrow +\infty$ as $t\searrow 0$, namely,
  thus yields (\ref{76.1}).
\qed

\bigskip

{\bf Acknowledgements.} \
 Youshan Tao was supported by the {\em National Natural Science Foundation of China
   (No. 12571222, 12631017)}.  Michael Winkler
 acknowledges support of the {\em Deutsche Forschungsgemeinschaft} (Project No.~444955436).

\end{document}